\documentclass[11pt]{amsart}

\pdfoutput=1

\usepackage{amsmath} 
\usepackage[foot]{amsaddr}
\usepackage{tikz,xcolor}

\usepackage{cancel}
\usepackage{esint,amssymb} 
\usepackage{graphicx}
\usepackage{MnSymbol}
\usepackage{mathtools} 
\usepackage[colorlinks=true, pdfstartview=FitV, linkcolor=blue, citecolor=blue, urlcolor=blue,pagebackref=false]{hyperref}
\usepackage{orcidlink}
\usepackage{microtype}

\usepackage{bm}
\usepackage{dsfont}
\usepackage{mathrsfs} 

\parskip= 2pt

\definecolor{darkgreen}{rgb}{0,0.5,0}
\definecolor{darkblue}{rgb}{0,0,0.7}
\definecolor{darkred}{rgb}{0.9,0.1,0.1}

\definecolor{lime}{HTML}{A6CE39}
\DeclareRobustCommand{\orcidicon}{%
	\begin{tikzpicture}
	\draw[lime, fill=lime] (0,0) 
	circle [radius=0.16] 
	node[white] {{\fontfamily{qag}\selectfont \tiny ID}};
	\draw[white, fill=white] (-0.0625,0.095) 
	circle [radius=0.007];
	\end{tikzpicture}
	\hspace{-2mm}
}

\foreach \x in {A, ..., Z}{%
	\expandafter\xdef\csname orcid\x\endcsname{\noexpand\href{https://orcid.org/\csname orcidauthor\x\endcsname}{\noexpand\orcidicon}}
}



\newtheorem{proposition}{Proposition}
\newtheorem{theorem}[proposition]{Theorem}
\newtheorem{lemma}[proposition]{Lemma}

\theoremstyle{remark}
\newtheorem{remark}[proposition]{Remark}

\theoremstyle{definition}
\newtheorem{definition}[proposition]{Definition}

\numberwithin{equation}{section}
\numberwithin{proposition}{section}
\numberwithin{figure}{section}
\numberwithin{table}{section}

\newcommand{\N}{\mathbb{N}}
\newcommand{\Q}{\mathbb{Q}}
\newcommand{\R}{\mathbb{R}}

\newcommand{\E}{\mathbb{E}}

\renewcommand{\leq}{\leqslant}
\renewcommand{\geq}{\geqslant}

\renewcommand{\subset}{\subseteq}
\renewcommand{\bar}{\overline}
\renewcommand{\tilde}{\widetilde}

\renewcommand{\d}{\mathrm{d}}

\newcommand{\mcl}{\mathcal}

\newcommand{\mfk}{\mathfrak}
\newcommand{\msc}{\mathscr}

\newcommand{\upa}{\uparrow}






%
%
%
%
%

\begin{document}

\author{Victor Issa\,\orcidlink{0009-0009-1304-046X}}
\address[Victor Issa]{Department of Mathematics, ENS de Lyon, Lyon, France}
\email{\href{mailto:victor.issa@ens-lyon.fr}{victor.issa@ens-lyon.fr}}

\keywords{}
\subjclass[2020]{}

\title[On Permutation-Invariant Optimizers for Parisi Formula]{Existence and Uniqueness of Permutation-Invariant Optimizers for Parisi Formula}

\begin{abstract}
    It has recently been shown in \cite{bates2023parisi} that, upon constraining the system to stay in a balanced state, the Parisi formula for the mean-field Potts model can be written as an optimization problem over permutation-invariant functional order parameters. 
    
    In this paper, we focus on permutation-invariant mean-field spin glass models. After introducing a correction term in the definition of the free energy and without constraining the system, we show that the limit free energy can be written as an optimization problem over permutation-invariant functional order parameters. We also show that for some models this optimization problem admits a unique optimizer. In the case of Ising spins, the correction term can be easily removed, and those results transfer to the uncorrected limit free energy.   
    
    We also derive an upper bound for the limit free energy of some nonconvex permutation-invariant models. This upper bound is expressed as a variational formula and is related to the solution of some Hamilton-Jacobi equation. We show that if no first order phase transition occurs, then this upper bound is equal to the lower bound derived in \cite{mourrat2020free}. We expect that this hypothesis holds at least in the high temperature regime.
    
    Our method relies on the fact that the free energy of any convex mean-field spin glass model can be interpreted as the strong solution of some Hamilton-Jacobi equation.

    \bigskip

    \noindent \textsc{Keywords and phrases:}  multi-species spin glass, Hamilton-Jacobi equations, free energy, Parisi formula.

    \medskip

    \noindent \textsc{MSC 2020:} 35D40, 60K35, 82B44, 82D30.
\end{abstract}

\maketitle

\newpage
\thispagestyle{empty}
{
  \hypersetup{linkcolor=black}
  \tableofcontents
}

\newpage
 \pagenumbering{arabic}
\section{Introduction}

\subsection{Preamble}

Let $D > 1$ be an integer, that we will keep fixed throughout the paper. We study Gaussian processes $(H_N(\sigma))_{\sigma \in \R^{D \times N}}$, whose covariance is of the form
\begin{equation} \label{e. covariance}
    \E [H_N(\sigma) H_N(\tau)] = N\xi \left( \frac{\sigma \tau^*}{N}\right).
\end{equation}
Here $\sigma \tau^* =(\sigma_d \cdot \tau_{d'})_{1 \leq d,d' \leq D}$, $x \cdot y$ is the standard scalar product on $\R^N$ and $\xi : \R^{D \times D} \to \R$ is a function given by an absolutely convergent power series. Unless stated otherwise, we will always assume that $\xi$ is convex on the set of positive semi-definite matrices. We give ourselves for each $N$ a reference probability measure $P_N$ on $\R^{D \times N}$ of the form $P_N = P_1^{\otimes N}$ where $P_1$ is a compactly supported probability measure on $\R^D$. For every $t \geq 0$, One can associate a random probability measure on $\R^{D \times N}$ to the process $H_N$ called the Gibbs measure and denoted by $\langle \cdot \rangle$. It is defined by
\begin{equation*}
    \langle h(\sigma) \rangle = \frac{\int h(\sigma) \exp \left( \sqrt{2t}H_N(\sigma) - Nt \xi \left( \frac{\sigma \sigma^*}{N}\right) \right) \d P_N(\sigma) }{\int \exp \left( \sqrt{2t}H_N(\sigma) - Nt \xi \left( \frac{\sigma \sigma^*}{N}\right) \right) \d P_N(\sigma)}.
\end{equation*}
An important step, in understanding the Gibbs measure associated to the family of processes $(H_N)_N$ is the computation of the large $N$ limit of the free energy 
\begin{equation} \label{e.free energy}
    \bar F_N(t) = - \frac{1}{N} \E \log \int \exp \left( \sqrt{2t}H_N(\sigma) - Nt \xi \left( \frac{\sigma \sigma^*}{N}\right) \right) \d P_N(\sigma).
\end{equation}
For the models of interest here, a variational formula for the limiting value of $\bar F_N(t)$ is known, this is the celebrated Parisi formula. 
The Parisi formula was first conjectured in \cite{parisi1979infinite} using a sophisticated non-rigorous argument now referred to as the replica method. The convergence of the free energy as $N \to +\infty$ was rigorously established in \cite{guerra2002} in the case of the so-called Sherrington-Kirkpatrick model which corresponds to $D = 1$, $\xi(x) = x^2$ and $P_1 = \text{Unif}(\{-1,1\})$. The Parisi formula for the Sherrington-Kirkpatrick model was then proven in \cite{gue03,Tpaper}. This was extended to the case $D = 1$, $P_1 = \text{Unif}(\{-1,1\})$ and $\xi(x) = \sum_{p \geq 1} a_p x^p$ with $a_p \geq 0$ in \cite{pan}. Some models with $D > 1$ such as multispecies models, the Potts model, and a general class of models with vector spins were treated in \cite{pan.multi,pan.potts,pan.vec}, under the assumption that $\xi$ is convex on $\R^{D \times D}$. Finally, the case $D > 1$ was treated in general in \cite{chenmourrat2023cavity} assuming only that that $\xi$ is convex on the set of positive semi-definite matrices. The following version of the Parisi formula is \cite[Corollary~8.2]{chenmourrat2023cavity}.

\begin{theorem} [\cite{chenmourrat2023cavity}] \label{t.parisi}
    If $\xi$ is convex on $S^D_+$, then for every $t > 0$,
    \begin{equation} \label{e.parisi}
        \lim_{N \to +\infty}  \bar F_N(t) = \sup_{\mfk q}\left\{ \psi(\mfk q) - t \int_0^1 \xi^* \left( \frac{ \mfk q(u)}{t}\right) \d u \right\}.
    \end{equation}
\end{theorem}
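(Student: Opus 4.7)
The plan is to follow the Hamilton-Jacobi equation strategy of \cite{chenmourrat2023cavity}. One enriches the free energy $\bar F_N(t)$ by adding an external field parametrized by a probability measure $\mu$ on $S^D_+$, producing an enriched free energy $\bar F_N(t,\mu)$ satisfying $\bar F_N(t,\delta_0)= \bar F_N(t)$. Then $t$ plays the role of time and $\mu$ plays the role of spatial variable in an infinite-dimensional Hamilton-Jacobi equation on a space of probability measures on $S^D_+$.

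First I would derive that $\bar F_N(t,\mu)$ satisfies an approximate Hamilton-Jacobi equation. Applying Gaussian integration by parts to $\dr_t \bar F_N$ produces a quantity involving the Gibbs expectation of $\xi$ evaluated at the overlap matrix $\si \si^*/N$, while a cavity-style computation identifies the Wasserstein-type derivative $\dr_\mu \bar F_N$ with a suitable functional of $\langle \si \rangle \langle \si \rangle^*/N^2$. Combined with overlap concentration (obtained via Ghirlanda-Guerra identities after adding a small generic perturbation), this yields
\begin{equation*}
    \dr_t \bar F_N(t,\mu) - \int \xi \bigl( -\dr_\mu \bar F_N(t,\mu,x) \bigr) \, \d\mu(x) \xrightarrow[N\to+\infty]{} 0.
\end{equation*}
Convexity of $\xi$ on $S^D_+$ is essential here because the argument that is fed to $\xi$ lies in the positive semidefinite cone.

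Next I would show that the limit $f(t,\mu) = \lim_{N\to+\infty} \bar F_N(t,\mu)$ exists and coincides with the unique Lipschitz strong solution of the limit Hamilton-Jacobi equation, with initial condition $f(0,\mu)= \psi(\mu)$ given by the enriched free energy at zero interaction strength. The convexity of $\xi$ on $S^D_+$, together with the monotonicity of $\bar F_N$ in $\mu$, forces the optimal characteristics to remain in the cone. A Hopf-Lax representation of the solution then reads
\begin{equation*}
    f(t,\delta_0) = \sup_{\mfk q} \Ll\{ \psi(\mfk q) - t \int_0^1 \xi^* \Ll( \frac{\mfk q(u)}{t} \Rr) \d u \Rr\},
\end{equation*}
where the supremum ranges over nondecreasing paths $\mfk q : [0,1] \to S^D_+$, which is precisely the announced formula once the supremum is reparametrized in terms of the functional order parameter.

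The main obstacle is the passage from the approximate equation to a genuine strong solution in the infinite-dimensional setting: one must equip the space of probability measures on $S^D_+$ with a notion of differentiation, and prove a uniqueness result for Lipschitz solutions that is robust enough to identify the limit regardless of which subsequential cluster point is considered. A secondary subtlety is ensuring that the Hopf-Lax formula applies despite $\xi$ being convex only on $S^D_+$ rather than on all of $\R^{D\times D}$; this is handled by restricting to cone-valued monotone paths and analyzing the subdifferential of $f$ carefully at the boundary of $S^D_+$, which is the technical heart of \cite{chenmourrat2023cavity}.
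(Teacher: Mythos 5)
Your proposal correctly identifies the Hamilton--Jacobi route behind \cite{chenmourrat2023cavity}, but the parametrization differs from the one the paper adopts. You enrich the free energy by a probability measure $\mu$ on $S^D_+$ and differentiate in a Wasserstein sense, which is the framework of the earlier works \cite{mourrat2019parisi,mourrat2020free,mourrat2020nonconvex}. The paper, following \cite{chenmourrat2023cavity,chen2022hamilton,chen2023viscosity}, instead enriches via a Poisson--Dirichlet cascade indexed by a nondecreasing c\`adl\`ag path $\mfk q \in \mcl Q(S^D_+)$, so that the spatial variable ranges over the closed convex cone $\mcl Q(S^D_+) \cap L^2 \subset L^2([0,1),S^D)$ and the derivative is an ordinary $L^2$-Gateaux derivative. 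The two pictures are linked by $\mfk q \leftrightarrow \mathrm{Law}(\mfk q(U))$, but it is the cone-in-Hilbert-space formulation that makes the well-posedness and comparison theory (Theorems~\ref{t. hj cone well posed} and~\ref{t.comparison}; \cite{chen2023viscosity}) and the Hopf--Lax representation \cite[Theorem~4.6]{chen2022hamilton} go through cleanly under convexity of $\xi$ on $S^D_+$ only.

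One conceptual slip is worth flagging: you say $f$ ``coincides with the unique Lipschitz strong solution.'' That is not a usable uniqueness class; Lipschitz functions satisfying a first-order Hamilton--Jacobi equation almost everywhere are in general far from unique, so this step as stated would not identify the limit. The identification actually runs in two stages. First, cavity computations show that the limit solves the equation wherever it is Gateaux differentiable; this is robust and holds even for nonconvex $\xi$. Second, and this is where convexity of $\xi$ on $S^D_+$ is used, one proves that the limit \emph{is} Gateaux differentiable on the nice interior $\mcl Q_\upa(S^D_+) \cap L^\infty$, hence a viscosity solution, and the \emph{viscosity} comparison principle then pins it down uniquely. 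Once ``unique strong solution'' is replaced by ``unique viscosity solution, which happens also to be a strong solution,'' your outline is sound, and the Hopf--Lax sup--inf, after minimizing the inner variable, produces $\xi^*$ and hence exactly \eqref{e.parisi}.
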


Here $S^D_+$ denotes the set of positive semi-definite symmetric matrices in $\R^{D \times D}$. We equip $\R^{D \times D}$ with the order $A \leq B$ if and only $B-A \in S^D_+$. The supremum in \eqref{e.parisi} is taken over the set of bounded functions in
\begin{equation}
    \mcl Q(S^D_+) = \{ \mfk q : [0,1) \to S^D_+ \big| \; \text{$\mfk q$ is càdlàg and nondecreasing} \}.
\end{equation}
The function $\psi$ is the cascade transform of the measure $P_1$ and $\xi^*$ is the convex conjugate of $\xi$ with respect to the cone $S^D_+$. We postpone the precise definition of these objects to \eqref{e.def psi} and \eqref{e.convex conjugate} respectively.

Let $\msc S_D$ denote the group of permutations of $\{1,\dots,D\}$, we say that $\xi$ is permutation-invariant and $P_1$ is permutation-invariant when for every permutation $s \in \msc S_D$,
\begin{equation} \label{e. xi permutation invariance}
    \forall R \in \R^{D \times D},  \; \xi(R) = \xi \left( (R_{s(d),s(d')})_{1 \leq d,d' \leq D}\right),
\end{equation}
\begin{equation} \label{e. P1 permutation invariance}
    \forall \chi \in \mcl C_b(\R^{D}), \; \int \chi(x_1,\dots,x_D) \d P_1(x) = \int \chi(x_{s(1)},\dots,x_{s(D)}) \d P_1(x).
\end{equation}
We will show that when $\xi$ and $P_1$ are permutation-invariant, the supremum in \eqref{e.parisi} can be taken over the set of permutation-invariant paths in $\mcl Q(S^D_+)$. This result can be interpreted as an absence of breaking of permutation-invariance by the system. This persistence of permutation-invariance was predicted to happen for the Potts model (see \eqref{e. Potts model} below) in \cite{elderfield1983potts}. It has been rigorously proven that the Potts model does not break its permutation invariance, but only after constraining the system to stay in a balanced state \cite{bates2023parisi} or by introducing a correction term of the form $-Nt \xi \left( \frac{\sigma \sigma^*}{N}\right)$ \cite{chen2023parisi}. In \eqref{e.free energy} the correction term of \cite{chen2023parisi} is also present, but our result is general enough to cover models where $P_1$ is supported on $\{-1,1\}^D$ and $\xi$ only depends on the diagonal coefficients of its argument, for those models the correction term is constant and our main results transfer to the uncorrected free energy. 
\subsection{Main results}
Let $\mcl Q(\R_+)$ denote the set of càdlàg and nondecreasing functions $[0,1) \to \R_+$. Let $U$ denote a uniform random variable in $[0,1)$, given $p_1,p_2 \in \mcl Q(\R_+)$, we let $(p_1,p_2)^\perp : [0,1) \to S^D_+$ be defined by 
\begin{equation*}
    (p_1,p_2)^\perp = p_1 \left( \text{id}_D - \frac{\mathbf{1}_D}{D} \right) +  p_2\frac{\mathbf{1}_D}{D},
\end{equation*}
 where $\text{id}_D$ denotes the $D \times D$ identity matrix and $\mathbf{1}_D$ the $D \times D$ matrix whose coefficients are all equal to $1$. As will be proven in Section~\ref{s.generalities}, every permutation-invariant path in $\mcl Q(S^D_+)$ is of the form $(p_1,p_2)^\perp$. 
 
 Heuristically, given a maximizing path $\mfk q$ in \eqref{e.parisi}, the law of the matrix $\mfk q(U)$ is the limiting distribution of $\frac{\sigma \tau^*}{N}$ where $\sigma$ and $\tau$ are two independent random variables with law $\langle \cdot \rangle$. For some specific models, the distribution of the overlap matrix has some additional properties, and those additional properties allow us to write the limit free energy as an optimization over a smaller set of paths. For example, consider the Potts model which corresponds to the family of processes
\begin{equation} \label{e. Potts model}
    H^\text{Potts}_N(\sigma) = \frac{1}{\sqrt{N}} \sum_{i,j=1}^N J_{ij} \sigma_i \cdot \sigma_j.
\end{equation}
Here $(J_{ij})_{i,j \geq 1}$ denotes a family of independent standard Gaussian random variables. The process $H_N^\text{Potts}$ satisfies \eqref{e. covariance} with $\xi(R) = \sum_{d,d'=1}^D R^2_{dd'}$. Often, the Potts model is considered with reference measure $P_1 = \text{Unif} \{e_1,\dots,e_D\}$, where $(e_1,\dots,e_D)$ denote the canonical basis of $\R^D$. With this assumption, the terms of the form $\sigma_i \cdot \sigma_j$ appearing in \eqref{e. Potts model} only take the value $0$ or $1$. In this case, if we sample two independent random variables $\sigma, \tau \in \R^{D \times N}$ with law $\langle \cdot \rangle$, their overlap matrix $R = \frac{\sigma \tau^*}{N}$ satisfies $\sum_{d,d'=1}^D R_{dd'} = 1$ almost surely and for every $s \in \msc S_D$, the matrices $R$ and $(R_{s(d)s(d')})_{1 \leq d,d' \leq D}$ are equal in law under $\E \langle \cdot \rangle$. This means that if $\mfk q$ is a maximizing path for the Parisi formula of the Potts model, we should expect that 
\begin{equation*}
    \mfk q = p \left( \text{id}_D - \frac{1_D}{D} \right) + \frac{\mathbf{1}_D}{D^2} = \left(p, \frac{1}{D} \right)^\perp
\end{equation*}
for some $p \in \mcl Q(\R_+)$. This observation on the set of optimal paths in \eqref{e.parisi} for the Potts model was leveraged in \cite{chen2023parisi, bates2023parisi}. In \cite{bates2023parisi}, the authors show that when the system is constrained to stay in a balanced state, the limit free energy of the Potts model can be written a supremum over $\mcl Q(\R_+)$. In \cite{chen2023parisi}, the author does not constrain the system but introduces a correction term of the form $-Nt \xi \left(\frac{\sigma \sigma^*}{N} \right)$ like in \eqref{e.free energy} and obtains results similar to \cite{bates2023parisi}.

In this paper, we show that similar results can be obtained in different settings. We will focus on permutation-invariant models, that is models with $\xi$ and $P_1$ satisfying \eqref{e. xi permutation invariance} and $\eqref{e. P1 permutation invariance}$. We will show that for those models, the following variational formula holds.

\begin{theorem} \label{t.symmetric optimizer matrix}
    Assume that $\xi$ is convex on $S^D_+$ and permutation-invariant, assume that $P_1$ is permutation-invariant. Then, for every $t \geq 0$,  
    \begin{equation} \label{e.symmetric optimizer matrix }
        \begin{split}
           \lim_{N \to +\infty} \bar F_N(t) = \sup_{(p_1,p_2)} &\inf_{(r_1,r_2)}\Biggl\{ \psi( (p_1,p_2)^\perp ) \\ -\langle p_1,r_1\rangle_{L^2}
                                                                                       &- \langle p_2,r_2 \rangle_{L^2} + t \int_0^1 \xi\left( \left(\frac{p_1(u)}{D-1},p_2(u)\right)^\perp \right) \d u\Biggl\}. 
        \end{split}        
    \end{equation}
    Where the supremum and the infimum are taken over $(\mcl Q(\R_+) \cap L^\infty)^2$.
\end{theorem}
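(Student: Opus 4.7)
The plan is to derive \eqref{e.symmetric optimizer matrix } from the Parisi formula \eqref{e.parisi} in three stages: symmetrize the optimizing path, parametrize the permutation-invariant paths by two scalar functions, and convert the $\xi^*$ term into an infimum via Legendre duality.

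\textbf{Stage 1 --- symmetrization.} For $s \in \msc S_D$ and $\mfk q \in \mcl Q(S^D_+)$, let $(s \cdot \mfk q)(u) := (\mfk q(u)_{s(d), s(d')})_{d,d'}$ and set $\bar{\mfk q} := \frac{1}{|\msc S_D|} \sum_{s \in \msc S_D} s \cdot \mfk q$. The path $\bar{\mfk q}$ is still bounded, càdlàg, nondecreasing and is now permutation-invariant. Since $\xi$ is convex and permutation-invariant on $S^D_+$, so is its convex conjugate $\xi^*$; Jensen's inequality applied pointwise in $u$ then gives $\xi^*(\bar{\mfk q}(u)/t) \le \xi^*(\mfk q(u)/t)$. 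For the $\psi$ term I would invoke (i) the concavity of $\psi$ on $\mcl Q(S^D_+)$, which is part of the standard theory of Ruelle probability cascades, and (ii) the invariance $\psi(s \cdot \mfk q) = \psi(\mfk q)$ that follows from \eqref{e. P1 permutation invariance} by relabelling coordinates in \eqref{e.def psi}. Combined, these give $\psi(\bar{\mfk q}) \ge \psi(\mfk q)$, so the supremum in \eqref{e.parisi} is attained on permutation-invariant paths.

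\textbf{Stage 2 --- parametrization.} Using the lemma promised in Section~\ref{s.generalities}, every permutation-invariant path in $\mcl Q(S^D_+)$ takes the form $(p_1, p_2)^\perp = p_1 A + p_2 B$, where $A := \text{id}_D - \mathbf{1}_D/D$ and $B := \mathbf{1}_D/D$ are the two orthogonal projectors commuting with the $\msc S_D$-action on symmetric matrices; boundedness of $\mfk q$ corresponds to $p_1, p_2 \in \mcl Q(\R_+) \cap L^\infty$, and positive-semidefiniteness corresponds to $p_1, p_2 \ge 0$.

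\textbf{Stage 3 --- dualization of $\xi^*$.} Writing $\xi^*$ through its definition \eqref{e.convex conjugate} as a supremum over $r \in S^D_+$ and applying the symmetrization of Stage~1 to $r$, the supremum can be restricted to $r = r_1 A + r_2 B$, $r_1, r_2 \ge 0$. Using $A^2 = A$, $B^2 = B$, $AB = 0$, $\tr(A) = D-1$ and $\tr(B) = 1$, a direct computation gives
\[
\tr\bigl((r_1 A + r_2 B)(p_1 A + p_2 B)\bigr) = (D-1) r_1 p_1 + r_2 p_2.
\]
Substituting $\tilde r_1 := (D-1) r_1$ absorbs the factor $(D-1)$ and yields the pointwise identity
\[
-t\, \xi^*\bigl((p_1, p_2)^\perp(u)/t\bigr) = \inf_{\tilde r_1, r_2 \ge 0} \Bigl\{ -\tilde r_1 p_1(u) - r_2 p_2(u) + t\, \xi\bigl((\tilde r_1/(D-1), r_2)^\perp\bigr) \Bigr\},
\]
which, after integration in $u$ and commuting the integral with the infimum, is exactly the bracket appearing in \eqref{e.symmetric optimizer matrix }.

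\textbf{Main obstacle.} Two delicate points remain. The smaller one is the concavity and permutation-invariance of $\psi$, which should be extracted cleanly from the cascade construction in \eqref{e.def psi}. The more substantive one is the exchange of $\int_0^1 \d u$ with the infimum over $(\tilde r_1, r_2)$ in such a way that the infimum is realized within $\mcl Q(\R_+) \cap L^\infty$, rather than over arbitrary measurable selectors. I would handle this by a monotone selection argument: because $p_1, p_2$ are nondecreasing and bounded and $\xi$ grows superlinearly on $S^D_+$, the pointwise minimizers $u \mapsto (\tilde r_1(u), r_2(u))$ can be chosen nondecreasing with an $L^\infty$ bound depending only on $\|p_1\|_\infty$, $\|p_2\|_\infty$ and the growth of $\xi$, and then taken càdlàg by right-continuous modification.
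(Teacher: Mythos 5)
Your Stage~1 relies on the claim that $\psi$ is concave on $\mcl Q(S^D_+)$ in the linear structure of paths. This is false, and it is the crux of why the paper does not use the direct symmetrization argument. The ``hidden concavity'' from Auffinger--Chen (which the paper invokes in Section~\ref{s.uniqueness}) is concavity of $\mu \mapsto \psi(p_\mu)$ over the space of probability measures $\mcl P_1(\R_+)$, and the map $\mu \mapsto p_\mu$ is not affine, so this does not transfer to concavity along segments of paths. In fact $\psi$ is convex (not concave) along the constant paths: for $D = 1$, $P_1 = \mathrm{Unif}\{-1,1\}$ and a constant path $q(u) \equiv q$, one has $\psi(q) = q - \E\log\cosh(\sqrt{2q}Z)$ with $Z\sim N(0,1)$, so $\psi'(q) = \E\tanh^2(\sqrt{2q}Z)$, which is strictly increasing in $q$, giving $\psi'' > 0$. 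Consequently the averaged path $\bar{\mfk q} = \tfrac{1}{D!}\sum_{s} \mfk q^s$ need not satisfy $\psi(\bar{\mfk q}) \ge \psi(\mfk q)$, and the symmetrization step does not go through.

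The paper instead proves the theorem by the Hamilton--Jacobi route (Section~\ref{s. fe and hj}): it first shows that the Gateaux derivative $\nabla f(t,\cdot)$ of the (permutation-invariant) limit free energy, evaluated at a permutation-invariant path, is itself permutation-invariant (Proposition~\ref{p.permutation invariant fe}); this gives that $f^\perp(t,q) := f(t,q^\perp)$ is a strong solution of a reduced equation on $\mcl Q_\upa(\R^2_+)$ (Proposition~\ref{p.fe is a strong solution}); finite-dimensional approximations, the comparison principle, and the Hopf--Lax representation then identify $f^\perp$ as the viscosity solution of \eqref{e. symmetric HJ} and yield \eqref{e.symmetric Hopf-Lax}. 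The permutation invariance is exploited at the level of derivatives, sidestepping any convexity/concavity claim about $\psi$. Your Stages~2 and~3 are correct and correspond to Propositions~\ref{p.permutation invariant paths} and~\ref{p.convex dual xi_perp} in the paper (and your Stage~3 computation in fact produces $\xi$ evaluated at the dual variable $r$, which corrects what appears to be a typo in the theorem statement where $p$ appears in place of $r$ inside $\xi$); but they cannot be invoked until the reduction to permutation-invariant paths in Stage~1 is justified, and for that the concavity-based Jensen step is not available.
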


Similarly to the Potts model, at the heuristic level \eqref{e.symmetric optimizer matrix } follows from the fact that given two independent random variables $\sigma$ and $\tau$ under $\langle \cdot \rangle$, for every $s \in \msc S_D$, the matrices $\frac{\sigma \tau^*}{N}$ and $\left(\frac{\sigma_{s(d)} \cdot \tau_{s(d')}}{N}\right)_{1 \leq d,d' \leq D}$ are equal in law under $\E \langle \cdot \rangle$. This means that every maximizing path should be of the form $\mfk q =(p_1,p_2)^\perp$ where $p_1,p_2 \in \mcl Q(\R_+)$ and the limit free energy should be a supremum over $\mcl Q(\R_+)^2$. 

We will also consider permutation-invariant models, where the interaction function $\xi$ is further assumed to only depend on $\frac{\sigma_1 \cdot \tau_1}{N},\dots,\frac{\sigma_D \cdot \tau_D}{N}$. In this case, we identify the map $R \mapsto \xi(R)$ defined on $\R^{D \times D}$ with the map $x \mapsto \xi(\text{diag}(x))$ defined on $\R^D$. Since $\xi$ only depends on the diagonal coefficients of its argument, we only need to keep track of the diagonal of the overlap matrix and the limit free energy can be written as a supremum over $\mcl Q(\R_+)^D$. Performing the same heuristic computation as above, we then expect that under our permutation invariance assumptions, the limit free energy can in fact be written as a supremum over $\mcl Q(\R_+)$. We will show that this is indeed the case.

Given a maximizing path $r$ of the Parisi formula, the law of the random variable $r(U)$ is called a Parisi measure. When $D = 1$, it is known that there exists a unique Parisi measure \cite{auffinger2015parisi}. In this case, the proof relies on a strict concavity property of the Parisi functional. However, when $D > 1$, for technical reasons this strict concavity property does not carry over well to $\mcl Q(S^D_+)$ or $\mcl Q(\R_+)^D$. But, in \eqref{e.symmetric optimizer vector} below, the Parisi formula is written as a maximization over $\mcl Q(\R_+)$, the set of $1$-dimensional paths. Thanks to this, we can proceed as in \cite{auffinger2015parisi} to prove uniqueness of Parisi measures. 
\begin{theorem} \label{t.symmetric optimizer vector}
    Assume that $\xi$ is convex on $S^D_+$, permutation-invariant and only depends on the diagonal coefficients of its argument, assume that $P_1$ is permutation-invariant. Then, for every $t > 0$,
    \begin{equation} \label{e.symmetric optimizer vector}
        \lim_{N \to +\infty} \bar F_N(t) = \sup_{p \in \mcl Q(\R_+) \cap L^\infty} \left\{ \psi(p \text{id}_D) - t \int_0^1 \xi^* \left( \frac{ p(u) \text{id}_D}{t} \right) \d u \right\}.
    \end{equation}
    In addition, the supremum in \eqref{e.symmetric optimizer vector} is reached at a unique $p^* \in \mcl Q(\R_+) \cap L^\infty$.  
\end{theorem}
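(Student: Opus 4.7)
The plan has two parts: establishing the variational formula \eqref{e.symmetric optimizer vector} and proving uniqueness of the maximizer. For the variational formula, my strategy is to apply Theorem~\ref{t.symmetric optimizer matrix} and then exploit the additional hypothesis that $\xi$ only depends on diagonal entries. The key simplification is that any permutation-invariant matrix $(a,b)^\perp$ has constant diagonal entries equal to $((D-1)a+b)/D$, so by permutation invariance of $\xi$, $\xi((a,b)^\perp) = \hat\xi(((D-1)a+b)/D)$ with $\hat\xi(c) := \xi(c\,\text{id}_D)$. In particular $\xi((r_1/(D-1),r_2)^\perp) = \hat\xi((r_1+r_2)/D)$. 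The inner infimum over $(r_1,r_2) \in (\mcl Q(\R_+)\cap L^\infty)^2$ in Theorem~\ref{t.symmetric optimizer matrix} can then be computed via Fenchel duality (after setting $s := r_1+r_2$ and handling the monotonicity constraint), reducing the sup-inf to a plain supremum over pairs $(p_1,p_2)$.

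To collapse further to a one-dimensional optimization, I would substitute $(p_1,p_2) \mapsto (p,p)$ with $p := \max(p_1,p_2)$. The matrix $p\,\text{id}_D - (p_1,p_2)^\perp$ has eigenvalues $p-p_1$ (multiplicity $D-1$) and $p-p_2$, both nonnegative, so $(p_1,p_2)^\perp \leq p\,\text{id}_D$ in the PSD order. If the cascade transform $\psi$ is monotone in the PSD order on paths, this substitution does not decrease $\psi$, while the simplified penalty term reads $\hat\xi^*(Dp/t)$. Finally, the identity $\hat\xi^*(Dp/t) = \xi^*(p\,\text{id}_D/t)$ that converts the formula to \eqref{e.symmetric optimizer vector} follows from Jensen's inequality applied to the permutation-symmetric convex function $\xi$ restricted to diagonal matrices.

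For uniqueness, I would follow the strategy of Auffinger and Chen \cite{auffinger2015parisi}. The Parisi functional $P(p) := \psi(p\,\text{id}_D) - t\int_0^1 \xi^*(p(u)\,\text{id}_D/t)\,\d u$ is concave on $\mcl Q(\R_+) \cap L^\infty$, since $p \mapsto p\,\text{id}_D$ is linear, $\psi$ is concave in its argument, and $\xi^*$ is convex. If two distinct maximizers $p^*, p^{**}$ existed, their midpoint $(p^*+p^{**})/2$ would also maximize $P$; computing the directional derivative of $P$ in the direction $p^* - p^{**}$ at the midpoint and using strict convexity of $\xi^*$ along diagonal paths (inherited from strict convexity of $\xi$), together with the extremality characterization for $\psi$, forces $p^* = p^{**}$. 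This is exactly where the reduction to $1$-dimensional paths pays off, since for $D > 1$ the analogous strict concavity on $\mcl Q(S^D_+)$ or $\mcl Q(\R_+)^D$ fails, as the authors note.

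The principal obstacle is justifying the monotonicity of the cascade transform $\psi$ in the PSD order on paths, which is needed to reduce $(p_1,p_2)^\perp$ to $p\,\text{id}_D$. This is not immediate from the definition and may require a dedicated lemma, possibly via the underlying Ruelle probability cascade representation or through a symmetrization at the level of the finite-$N$ free energy. A secondary technical point is the treatment of the monotonicity constraint on $(r_1,r_2)$ inside the inner Fenchel duality, where a naive pointwise Legendre transform may produce non-monotone minimizers and thus require a rearrangement or approximation argument.
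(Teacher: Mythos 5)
Your route diverges from the paper's in a way that introduces two genuine gaps, one in the variational formula and one in the uniqueness.

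For the variational formula, the paper does not derive \eqref{e.symmetric optimizer vector} from Theorem~\ref{t.symmetric optimizer matrix} at all: it repeats the Hamilton--Jacobi analysis of Section~\ref{ss.matrix paths} with the enriched free energy restricted to $\mcl Q(\R^D_+)$, shows the resulting $f^\dagger$ is Gateaux differentiable and a strong solution on $\mcl Q_\upa(\R_+)\cap L^\infty$ of the one-dimensional equation (Proposition~\ref{p. strong solution vector}), and then passes to the viscosity solution and Hopf--Lax representation on $\mcl Q(\R_+)\cap L^2$ via the finite-dimensional projections (Theorem~\ref{t.symmetric HJ vector}). Your substitution $(p_1,p_2)\mapsto(p,p)$ with $p=\max(p_1,p_2)$ breaks down at the Fenchel-duality step. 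The issue you call ``secondary'' is actually the main one: the inner infimum over $(r_1,r_2)\in(\mcl Q(\R_+)\cap L^\infty)^2$ is constrained by requiring \emph{both} $r_1$ and $r_2$ to be nondecreasing, whereas $\xi_\perp$ only sees $r_1+r_2$. If $p_1-p_2$ changes sign, the pointwise-optimal split of $r_1+r_2$ into $r_1,r_2$ is not admissible, and the dual penalty is in general \emph{strictly smaller} than $t\int\hat\xi^*(D\max(p_1,p_2)/t)$. Since the map $(p_1,p_2)\mapsto G(p_1,p_2)$ in $\sup_p\{\psi^\perp(p)-G(p)\}$ is then not a function of $\max(p_1,p_2)$ alone, replacing $(p_1,p_2)$ by $(\max,\max)$ increases $\psi^\perp$ but also increases $G$, and the sign of the net change is unclear; the argument gives no upper bound at all. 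By contrast, what you call ``the principal obstacle'' — monotonicity of $\psi$ in the pointwise PSD order — is not an obstacle: by \cite[Proposition~3.6]{chenmourrat2023cavity}, $\psi$ is $(\mcl Q(S^D_+)\cap L^2)^*$-nondecreasing, and any path that is pointwise $S^D_+$-nonnegative lies in this dual cone, so pointwise PSD dominance does imply $\psi$ increases.

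For uniqueness, your claimed concavity of $P(p)=\psi(p\,\text{id}_D)-t\int\xi^*(p\,\text{id}_D/t)$ on the space of paths $\mcl Q(\R_+)\cap L^\infty$ is false. The map $p\mapsto\psi(p\,\text{id}_D)$ is \emph{not} concave in $p$: taking $D=1$, $P_1=\text{Unif}\{-1,1\}$ and constant paths $p\equiv q$, one computes $\psi(q)=q-\E\log(2\cosh(\sqrt{2q}g))=-\log 2+q^2+O(q^3)$, which is strictly convex near $q=0$. The entire content of the Auffinger--Chen argument, and of Proposition~\ref{p.strict concavity} which the paper invokes via \cite{chen2023parisipdevector}, is that the hidden strict concavity holds when the functional is viewed as a function of the \emph{measure} $\mu=\text{Law}(p(U))$ through the quantile-function reparametrization $\mu\mapsto p_\mu$, which is nonlinear; the paper's Step~4 takes the midpoint $\mu=(\mu_0+\mu_1)/2$ in measure space (not in path space), uses strict concavity of $\mu\mapsto\psi^\dagger(p_\mu)$, and uses that $\mu\mapsto\int\xi^*(p_\mu/t)$ is \emph{linear} in $\mu$. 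You thus need no strict convexity of $\xi^*$ at all; and in any case ``strict convexity of $\xi^*$ inherited from strict convexity of $\xi$'' is not a valid implication — the correct duality is that essential smoothness of $\xi$ yields strict convexity of $\xi^*$ and conversely. Finally, the paper also needs Steps~1--3 (an a priori $L^\infty$ bound and a compactness argument) to guarantee that a maximizer exists, which your sketch does not address.
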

Let us now comment on the inclusion of the term $-Nt \xi \left( \frac{\sigma \sigma^*}{N}\right)$ in the definition of the free energy \eqref{e.free energy}. Consider again the Potts model, that is $\xi(R) = \sum_{d,d' = 1}^D R^2_{dd'}$ and $P_1 = \text{Unif}\{e_1,\dots,e_D\}$. For any $\sigma \in \{e_1,\dots,e_D\}^N$, we have $\frac{\sigma \sigma^*}{N} = \text{diag}(\alpha_1,\dots,\alpha_D)$ where
\begin{equation*}
    \alpha_d = \frac{1}{N}\#\{ i \leq N, \sigma_i = e_d\}.
\end{equation*}
So, $\xi(\sigma \sigma^*/N) = \sum_{d= 1}^D \alpha^2_d$ with $\sum_{d =1}^D \alpha_d = 1$ and $\alpha_d \geq 0$. This means that for the Potts model, the value of the correction term $-Nt \xi \left( \frac{\sigma \sigma^*}{N}\right)$ is minimal on configurations of the form $\sigma = (e_d,\dots,e_d)$ and maximal on configurations satisfying $\alpha_d = \frac{1}{D}$. In particular, at least in the case of the Potts model, even though we are not constraining the system to stay in a balanced state, the correction term favors configurations $\sigma \in \R^{D \times N}$ which are balanced. Thankfully, this is not always the case and for some models the correction term $-Nt \xi \left( \frac{\sigma \sigma^*}{N}\right)$ can be removed. For example, if we assume that $\xi$ only depends on the diagonal coefficients of its argument and $P_1$ is supported on $\{-1,1\}^D$, then the correction term is constant. In this case, the variational formula \eqref{e.symmetric optimizer vector} can be rewritten as a variational formula for the uncorrected free energy.

Let $\alpha \geq 1$, for $\sigma \in \R^{2 \times N}$ define 
\begin{equation*}
    H_N^\text{BP+SK}(\sigma) = \frac{1}{\sqrt{N}} \sum_{i,j =1}^N J^{11}_{ij}\sigma_{1i}\sigma_{1j} + \frac{1}{\sqrt{N}} \sum_{i,j =1}^N J^{22}_{ij}\sigma_{2i}\sigma_{2j} + \frac{1}{\sqrt{N}} \sum_{i,j =1}^N J^{12}_{ij}\sigma_{1i}\sigma_{2j},
\end{equation*}
where $J^{11} = (J^{11}_{ij})_{i,j \geq 1}$ and $J^{22} = (J^{22}_{ij})_{i,j \geq 1}$ are independent families of independent centered Gaussian random variables with variance $\alpha/2$ and $(J^{12}_{ij})_{i,j \geq 1}$ is a family of independent Gaussian random variables with variance $1$ and independent of $J^{11}$ and $J^{22}$. The process $ H_N^\text{BP+SK}$ satisfies \eqref{e. covariance} with,
\begin{equation*}
     \xi^{BP+SK}(R) = \frac{\alpha}{2} R_{11}^2 + \frac{\alpha}{2} R_{22}^2 + R_{11}R_{22}.
\end{equation*}
Choosing $P_1 = \text{Unif}(\{-1,1\}^2)$ as the reference probability measure, Theorem~\ref{t.symmetric optimizer vector} can be applied to $H_N^\text{BP+SK}$ to discover that the system does not break its permutation invariance. One can wonder what happens when $0 \leq \alpha < 1$, in this case $\xi^{BP+SK}$ is nonconvex on $S^2_+$ and there is no known generalization of Theorem~\ref{t.parisi}. In Section~\ref{s.upper bound}, we will discuss some results that are applicable in this case.

When $\xi$ is not assumed to be convex on $S^D_+$, to the best of our knowledge, there is no proof of the fact that $\bar F_N(t)$ converges as $N \to +\infty$. Building upon an interpolation argument developed in \cite[Section~2.1]{barra2012glassy}, we will show that for permutation-invariant models where $\xi$ only depends on the diagonal coefficients of its argument and is nonconvex on $S^D_+$, the right-hand side in \eqref{e.symmetric optimizer vector} is an upper bound on the $\limsup$ of the free energy.

\begin{theorem} \label{t. upper bound}
    Assume that $\xi$ is permutation-invariant and only depends on the diagonal coefficients of its argument, assume that $P_1$ is permutation-invariant. Then, even when $\xi$ is nonconvex, we have for every $t > 0$,
    \begin{equation*}
        \limsup_{N \to +\infty} \bar F_N(t) \leq \sup_{p \in \mcl Q(\R_+) \cap L^\infty} \left\{ \psi(p \text{id}_D) - t \int_0^1 \Xi^* \left( \frac{ p(u) \text{id}_D}{t} \right) \d u \right\},
    \end{equation*}
    where $\Xi(x) = \frac{1}{D} \sum_{d = 1}^D \xi(x_d,\dots,x_d)$.
\end{theorem}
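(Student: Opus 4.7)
The plan is to apply a Gaussian interpolation argument in the spirit of \cite[Section~2.1]{barra2012glassy} to compare $\bar F_N(t)$ with the free energy of a convex auxiliary model, and then invoke Theorem~\ref{t.symmetric optimizer vector}.

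First, I would introduce an independent centered Gaussian process $\tilde H_N$ with covariance $\E[\tilde H_N(\sigma)\tilde H_N(\tau)] = N\Xi(\sigma\tau^*/N)$. Since $\Xi(x) = \frac{1}{D}\sum_{d=1}^D \phi(x_d)$ with $\phi(y) = \xi(y,\ldots,y)$ is separable and $\phi$ inherits nonnegative power-series coefficients from $\xi$, the process $\tilde H_N$ exists and $\Xi$ is convex on $S^D_+$. Setting $H_{N,s} = \sqrt{s}H_N + \sqrt{1-s}\tilde H_N$ and $\xi_s = s\xi + (1-s)\Xi$ for $s \in [0,1]$, define the interpolated free energy
\begin{equation*}
    F_N(s) = -\frac{1}{N}\E\log\int \exp\Ll(\sqrt{2t}H_{N,s}(\sigma) - Nt\xi_s(\sigma\sigma^*/N)\Rr)\d P_N(\sigma).
\end{equation*}
A standard Gaussian integration-by-parts computation, in which the self-overlap correction $-Nt\xi_s(\sigma\sigma^*/N)$ cancels the diagonal boundary term, yields
\begin{equation*}
    F_N'(s) = t\,\E\Ll\langle(\xi - \Xi)(R_{12})\Rr\rangle_s,
\end{equation*}
where $R_{12}$ is the overlap matrix between two independent replicas under the $s$-dependent Gibbs measure.

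The key estimate is then a pointwise bound $\xi(r) \leq \Xi(r)$ on the support of the overlap diagonal. Expanding $\xi$ into monomials $c_\alpha r^\alpha$ grouped by $\msc S_D$-orbit and applying the AM-GM inequality $r_1^{\alpha_1}\cdots r_D^{\alpha_D} \leq |r_1|^{\alpha_1}\cdots|r_D|^{\alpha_D} \leq \frac{1}{|\alpha|}\sum_d \alpha_d |r_d|^{|\alpha|}$ to each monomial, summing over the orbit of $\alpha$, and using the permutation invariance of the coefficients, one obtains the inequality. This gives $F_N'(s) \leq 0$, hence $\bar F_N(t) = F_N(1) \leq F_N(0)$, and $F_N(0)$ is precisely the free energy with $\xi$ replaced by $\Xi$. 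Since $\Xi$ is convex on $S^D_+$, permutation-invariant, and depends only on diagonals, Theorem~\ref{t.symmetric optimizer vector} applies and gives
\begin{equation*}
    \lim_{N\to\infty}F_N(0) = \sup_{p\in\mcl Q(\R_+)\cap L^\infty}\Ll\{\psi(p\,\text{id}_D) - t\int_0^1\Xi^*\Ll(\frac{p(u)\,\text{id}_D}{t}\Rr)\d u\Rr\}.
\end{equation*}
Taking $\limsup_{N\to\infty}$ concludes.

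The main obstacle will be controlling the sign of $\xi - \Xi$ in the third step. The AM-GM bound is clean when every monomial in $\xi$ has even total degree $|\alpha|$ (so that $|r_d|^{|\alpha|} = r_d^{|\alpha|}$ and one can absorb $r^\alpha \leq |r|^\alpha$), which covers the natural settings, e.g.\ $\xi$ built from even-degree $p$-spin interactions with $P_1$ supported on $\{\pm 1\}^D$. For $\xi$ with odd-degree terms the pointwise bound can fail when overlaps are negative (for instance $\xi(r) = (r_1+r_2)(r_1^2+r_2^2)$ at $r = (-1,0)$), and one must work in expectation: the permutation invariance of the Gibbs law (inherited from that of $\xi$ and $P_1$) forces the joint law of the diagonal entries of $R_{12}$ under $\E\langle\cdot\rangle$ to be $\msc S_D$-symmetric, which one can then exploit to symmetrize $\xi(R_{12})$ before the comparison with $\Xi(R_{12})$.
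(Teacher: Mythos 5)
Your overall strategy matches the paper's: introduce the process $K_N$ with covariance $\Xi$ (in the paper $K_N(\sigma) = \tfrac{1}{\sqrt D}\sum_d H_N^{\mathrm{sym},d}(\sigma_d)$), interpolate between $H_N$ and $K_N$ with the self-overlap correction absorbing the diagonal term, obtain $\varphi_N'(\lambda) = \tfrac12\E\langle(\Xi - \xi)(R)\rangle_\lambda$ by Gaussian integration by parts, establish $\xi \le \Xi$ on $\R^D_+$ via AM-GM applied monomial by monomial (the paper's Propositions~\ref{p. inequality monomials} and \ref{p.inequality xi}), and finish by applying Theorem~\ref{t.symmetric optimizer vector} to the convex permutation-invariant model with covariance $\Xi$. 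You also correctly notice that the pointwise bound $\xi \le \Xi$ can fail off $\R^D_+$ for odd-degree terms, which is exactly the paper's Remark~\ref{r.counterexample}.

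However, your proposed fix for the negative-overlap case does not work. You suggest exploiting the $\msc S_D$-symmetry of the law of $R_{12}$ under $\E\langle\cdot\rangle$ to ``symmetrize $\xi(R_{12})$'' before comparing with $\Xi(R_{12})$. But $\xi$ is already assumed permutation-invariant, so $\tfrac{1}{D!}\sum_{s\in\msc S_D}\xi(R^s) = \xi(R)$ identically; the symmetrization changes nothing. Indeed, the paper's counterexample $\xi(x_1,x_2)=x_1x_2(x_1+x_2)$ is itself $\msc S_2$-invariant, yet $\xi(-2,1) = 2 > -7 = \Xi(-2,1)$. No amount of symmetrizing in law can repair a pointwise inequality that fails on a set of positive Gibbs mass. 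What the paper actually does is substantially more: it adds a vanishing perturbation Hamiltonian $c_N H_N^{\mathrm{pert}}$ (Section~\ref{ss.interpolation}, based on the Ghirlanda--Guerra machinery) so that the \emph{positivity principle} holds, i.e.\ Lemma~\ref{l.positivity principle} guarantees that $\E_u\E_{H_N^{\mathrm{pert}}}\tilde G_N^{\otimes 2}(\exists d,\, \sigma_d\cdot\tau_d/N \le -\varepsilon) \to 0$ uniformly in the base Hamiltonian. This lets one split $\varphi_N'(\lambda) \ge \delta(\varepsilon) - \tfrac{\|\Xi-\xi\|_\infty}{2}\,\E\langle\mathbf{1}_{\exists d:\,\sigma_d\cdot\tau_d/N\le -\varepsilon}\rangle_\lambda$ with $\delta(\varepsilon) \ge -C\varepsilon$, integrate in $\lambda$, send $N\to\infty$ and then $\varepsilon\to 0$. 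This positivity-principle step (the paper's Section~\ref{ss.interpolation} together with Lemma~\ref{l.positivity principle}) is the genuinely missing ingredient in your argument; without it the proof only covers the case where $\xi$ has only even-degree monomials.
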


In addition, we show that if no first order phase transition occurs, then the upper bound of Theorem~\ref{t. upper bound} matches the lower bound derived in \cite{mourrat2020free,mourrat2020nonconvex}. The no first order phase transition hypothesis is formally written in \eqref{e.hypo} below, note that this hypothesis is true when $\xi$ is convex. We refer to Theorem~\ref{t.nonconvex fe} below for a formal version of Theorem~\ref{t.informal nonconvex fe}. 
\begin{theorem} \label{t.informal nonconvex fe} 
   Assume that the hypotheses of Theorem~\ref{t. upper bound} hold and further assume that no first order phases transition occurs, then for every $t > 0$, $\bar F_N(t)$ converges as $N \to +\infty$ and 
    \begin{equation*}
        \lim_{N \to +\infty} \bar F_N(t) = \sup_{p \in \mcl Q(\R_+) \cap L^\infty} \left\{ \psi(p \text{id}_D) - t \int_0^1 \Xi^* \left( \frac{ p(u) \text{id}_D}{t} \right) \d u \right\},
    \end{equation*}
    where $\Xi(x) = \frac{1}{D} \sum_{d = 1}^D \xi(x_d,\dots,x_d)$.
\end{theorem}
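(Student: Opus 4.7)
The plan is to match the upper bound of Theorem~\ref{t. upper bound} with a lower bound coming from \cite{mourrat2020free, mourrat2020nonconvex}, using the no first order phase transition hypothesis \eqref{e.hypo} to close the gap and thereby establish convergence of $\bar F_N(t)$.

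First I would invoke the general lower bound on $\liminf_{N \to +\infty} \bar F_N(t)$ from \cite{mourrat2020free, mourrat2020nonconvex}, which is valid without any convexity assumption on $\xi$. This lower bound takes the form of a Hopf-type variational formula and can be interpreted as the statement that any subsequential limit of $\bar F_N$ dominates a viscosity subsolution to the Hamilton-Jacobi equation $\partial_t f = \xi(\nabla f)$ with initial condition built from $\psi$. Using the permutation-invariance reduction already developed for Theorems~\ref{t.symmetric optimizer matrix} and~\ref{t.symmetric optimizer vector}, I would then restrict this lower bound to paths of the form $p \, \text{id}_D$ with $p \in \mcl Q(\R_+) \cap L^\infty$. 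On such paths, $\xi$ and its symmetrization $\Xi$ coincide (since $\xi(x,\dots,x) = \Xi(x,\dots,x)$), so the restriction naturally substitutes $\Xi$ for $\xi$ in the lower bound.

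The matching step is where \eqref{e.hypo} enters. The restricted Hopf-type lower bound is, roughly, a sup-inf of a quantity built from $\Xi$, while the upper bound of Theorem~\ref{t. upper bound} is a supremum of a quantity involving the convex conjugate $\Xi^*$. Under the no phase transition assumption \eqref{e.hypo}, the two values agree: in Hamilton-Jacobi language, this is the statement that at time $t$ the viscosity subsolution and supersolution to $\partial_t f = \Xi(\nabla f)$ coincide, i.e.\ no shock has formed. Combining the matched bounds yields both the convergence of $\bar F_N(t)$ and the claimed identity.

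The main obstacle will be this matching step. The Mourrat lower bound is naturally phrased in terms of $\xi$ (not its convex envelope), and Theorem~\ref{t. upper bound} is phrased in terms of $\Xi^*$; to reconcile them one must verify, under \eqref{e.hypo}, that exchanging the order of supremum and infimum, or equivalently performing the Legendre transform inside the outer supremum, does not lose anything at the particular $t$ under consideration. I expect this to be achieved via a duality computation combined with a continuity-in-$t$ argument, leveraging the concavity properties of $\psi$ and the structural role played by the correction term $-Nt\,\xi(\sigma\sigma^*/N)$ in \eqref{e.free energy}, which in the setting of Theorem~\ref{t. upper bound} was used to reduce to paths of the form $p\,\text{id}_D$ in the first place.
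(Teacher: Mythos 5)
Your high-level architecture is right: the Mourrat lower bound provides $\liminf_{N\to\infty}\bar F_N(t)\ge f(t,0)$, Theorem~\ref{t. upper bound} provides $\limsup_{N\to\infty}\bar F_N(t)\le g(t,0)$, and the task is to show $f(t,0)=g(t,0)$ under~\eqref{e.hypo}, where $f$ solves the $D$-dimensional equation with nonconvex nonlinearity $\xi$ on $\mcl Q(\R^D_+)$ and $g$ solves the one-dimensional equation with (automatically convex) nonlinearity $\xi_\dagger(\cdot/D)$ on $\mcl Q(\R_+)$. You also correctly spot that on diagonal paths $\xi$ and $\Xi$ coincide. However, the matching step is not achieved by the mechanism you gesture at, and your description of what \eqref{e.hypo} buys is off.

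You frame~\eqref{e.hypo} as saying "the viscosity subsolution and supersolution to $\partial_t f = \Xi(\nabla f)$ coincide, i.e.\ no shock has formed." That equation has a \emph{convex} nonlinearity (Proposition~\ref{p.inequality xi} shows $\xi_\dagger$ is convex on $\R_+$), so its sub- and supersolutions already coincide by the comparison principle --- there is no potential discrepancy there. The real issue is whether $f^\dagger(t,p):=f(t,(p,\dots,p))$, the diagonal restriction of the viscosity solution of the \emph{nonconvex} $\xi$-equation, agrees with $g$, the viscosity solution of the convex $\Xi_\dagger$-equation. These could differ a priori even though each is separately unique. The role of~\eqref{e.hypo} is precisely to supply Gateaux differentiability of $f$, so that one can differentiate $f^\dagger$ along the diagonal, use permutation invariance of $\nabla f$ (from uniqueness of viscosity solutions and permutation invariance of $\psi$, $\xi$) to conclude $\nabla f(t,(p,\dots,p))$ is itself a constant-diagonal path, and thereby observe that $\xi(\nabla f)=\Xi(\nabla f)=\xi_\dagger(\nabla f^\dagger/D)$ there. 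This shows $f^\dagger$ is a \emph{strong} solution of the one-dimensional convex equation on $\mcl Q_\upa(\R_+)\cap L^\infty$; then the finite-dimensional approximation and comparison-principle machinery of Section~\ref{s. fe and hj} (specifically the argument underlying Theorem~\ref{t.symmetric HJ vector}, which promotes strong solutions on the good region to the unique viscosity solution) yields $f^\dagger=g$, hence $f(t,0)=g(t,0)$. Your proposed closing move --- "a duality computation combined with a continuity-in-$t$ argument" --- does not match this and, as written, does not close the gap: duality/Legendre arguments alone cannot distinguish the value of the nonconvex equation from the value of the convexified one without the differentiability input from~\eqref{e.hypo}.

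A secondary omission: you never explain why the one-dimensional Hamiltonian $\xi_\dagger$ is convex when the original $\xi$ is not. This is Proposition~\ref{p.inequality xi} and it is essential --- it is what allows the paper to treat the $g$-equation with the convex theory at all (Hopf-Lax representation, comparison principle, strong solutions being viscosity solutions). Without this observation the whole "sandwich" strategy falls apart, since neither $f(t,0)$ nor $g(t,0)$ would have a usable variational representation.
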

As explained in Remark~\ref{r.no first order transition for small t} below, it is plausible that there exists $t_c > 0$ such that no first order phase transition occurs on $[0,t_c)$. In this case, the argument used to prove Theorem~\ref{t.informal nonconvex fe} allows to identify the limit free energy on $[0,t_c]$. 
\subsection{Motivations}
Most of the proof we derive below rely on the following result, which we state informally. 
\begin{theorem}[\cite{mourrat2019parisi,chenmourrat2023cavity}] \label{t.visco and fe}
    Assume that $\xi$ is convex on $S^D_+$, then for every $t \geq 0$,
    \begin{equation*}
        \lim_{N \to +\infty} \bar F_N(t) = f(t,0),
    \end{equation*}
    where $f : \R_+ \times \mcl Q(S^D_+) \to \R$ is the unique solution of 
    \begin{equation} \label{e. hj organization}
        \begin{cases}
            \partial_t f  - \int \xi(\nabla f) = 0 \text{ on } \R_+ \times\mcl Q(S^D_+)\\
            f(0,\cdot) = \psi \text{ on } \mcl Q(S^D_+).
        \end{cases}
    \end{equation}
\end{theorem}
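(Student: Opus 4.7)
The plan is to work with an enriched free energy that incorporates a path parameter $\mfk q \in \mcl Q(S^D_+)$ and to show that this enriched object satisfies a Hamilton-Jacobi equation in the large-$N$ limit. Concretely, I would add to $H_N$ an independent cavity-type perturbation built from a Poisson-Dirichlet cascade indexed by the path $\mfk q$ (as in the constructions underlying the cascade transform $\psi$), and define $\bar F_N(t,\mfk q)$ accordingly. At $t=0$ this enriched free energy coincides, modulo vanishing $N$-corrections, with $\psi(\mfk q)$, which takes care of the initial condition in \eqref{e. hj organization}. At $\mfk q \equiv 0$ it reduces to the original $\bar F_N(t)$.

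The next step is to derive the Hamilton-Jacobi equation at the level of $\bar F_N$. Differentiating in $t$ and applying Gaussian integration by parts, the correction term $-Nt\,\xi(\sigma\sigma^*/N)$ built into \eqref{e.free energy} exactly compensates the diagonal contribution, yielding $\partial_t \bar F_N(t,\mfk q) = \E \langle \xi(\sigma \tau^*/N) \rangle$ for independent replicas $\sigma,\tau$. On the other hand, differentiating in $\mfk q$ (in an appropriate directional sense on the cone of nondecreasing $S^D_+$-valued paths, viewed as a subset of $L^2([0,1);S^D_+)$) produces, via the Ghirlanda-Guerra identities obtained from the cascade perturbation, the law of $\sigma\tau^*/N$ under $\E\langle\cdot\rangle^{\otimes 2}$. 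Combining these two calculations gives, up to vanishing errors, $\partial_t \bar F_N - \int \xi(\nabla \bar F_N) \simeq 0$.

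To conclude, I would then establish well-posedness of the Hamilton-Jacobi equation \eqref{e. hj organization} on $\mcl Q(S^D_+)$ in an appropriate sense (say, Lipschitz solutions in the $L^2$-metric that are monotone in $\mfk q$, using the convexity of $\xi$ on $S^D_+$ to apply a Hopf-Lax type representation). Compactness of $(\bar F_N)$ together with the approximate equation and the matching initial condition $\bar F_N(0,\mfk q) \to \psi(\mfk q)$ identify every subsequential limit with the unique solution $f$; in particular $\lim_{N}\bar F_N(t) = \lim_N \bar F_N(t,0) = f(t,0)$. Finally, one may cross-check this limit against the Parisi formula of Theorem~\ref{t.parisi}, since when $\xi$ is convex the Hopf-Lax formula for \eqref{e. hj organization} is precisely the variational expression in \eqref{e.parisi}.

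The main obstacle is the well-posedness theory for the Hamilton-Jacobi equation on the infinite-dimensional cone $\mcl Q(S^D_+)$: one must choose a metric and a notion of solution that are simultaneously strong enough to give uniqueness and weak enough to be stable under the $N\to\infty$ limit. This is where convexity of $\xi$ on $S^D_+$ is crucial, both to ensure that $\xi$ extends to a well-behaved Hamiltonian on the relevant cone and to obtain a comparison principle via a Hopf-Lax representation; without it, only the one-sided (upper/lower) bounds typical of the nonconvex theory survive, which is precisely the regime addressed later in Theorems~\ref{t. upper bound} and \ref{t.informal nonconvex fe}.
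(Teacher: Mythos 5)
This theorem is cited ([mourrat2019parisi, chenmourrat2023cavity]) rather than proved in the paper; Sections~2 and~3 merely unpack the definitions and point to \cite[Propositions~7.2~\&~8.6]{chenmourrat2023cavity}. Your sketch correctly identifies the ingredients (the enriched free energy built from a Poisson--Dirichlet cascade, Gaussian integration by parts, Ghirlanda--Guerra identities via perturbation, viscosity solutions with a Hopf--Lax representation on the cone $\mcl Q(S^D_+)$), and your computation that the self-overlap correction cancels so that $\partial_t \bar F_N(t,\mfk q) = \E\langle \xi(\sigma\tau^*/N)\rangle$ is right. But the closing step of your argument has a genuine gap, and the role you assign to convexity is misplaced.

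You propose to establish an approximate equation $\partial_t \bar F_N - \int \xi(\nabla\bar F_N)\simeq 0$ at finite $N$, take subsequential limits by compactness, and then invoke uniqueness to identify every limit with $f$. This does not close: an approximate pointwise identity for the derivatives (which is all the cavity/Ghirlanda--Guerra calculation yields) does not make $\bar F_N$ into an approximate viscosity sub/supersolution, and a Lipschitz function solving a Hamilton--Jacobi equation almost everywhere need not be the viscosity solution (the eikonal equation on an interval already shows this). The stability of viscosity solutions under uniform convergence requires the prelimit functions to be genuine sub/supersolutions of nearby equations, which you have not supplied. In fact the cavity calculation naturally gives only a one-sided inequality; the matching bound in the other direction is Guerra's interpolation, which is an independent ingredient and not a consequence of compactness.

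The actual mechanism, as the paper describes it, runs in the reverse order. One first establishes that $\bar F_N$ converges to a Lipschitz $f$ (this is essentially the Parisi formula itself, obtained from Guerra plus the cavity/synchronization machinery---it is not a soft compactness statement). One then proves that $f$ is Gateaux differentiable at every nice point of $(0,\infty)\times(\mcl Q_\upa(S^D_+)\cap L^\infty)$; this is the step the paper flags as the ``real miracle'' and it is here, not in the Hopf--Lax formula per se, that convexity of $\xi$ on $S^D_+$ is indispensable (one gets semi-convexity of $f$ from the variational representation and semi-concavity from the structure of the enriched free energy, forcing differentiability). Only then does the cavity computation, which is robust and holds even without convexity, promote the approximate identity to a strong equation for $f$, which is a fortiori a viscosity solution, and uniqueness concludes. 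In the nonconvex regime this chain breaks precisely at the differentiability step, leaving only the one-sided bounds, which is consistent with what you observe at the end of your proposal.
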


In Section~\ref{s.enriched } we will give a precise definition of $f$ and $\psi$, we will also explain the precise meaning of \eqref{e. hj organization}. In the context of Theorem~\ref{t.symmetric optimizer matrix} and Theorem~\ref{t.symmetric optimizer vector}, this approach trough partial differential equations does not seem to possess any clear advantage when compared with the methods developed in \cite{chen2023parisi}. The true power of Theorem~\ref{t.visco and fe} is revealed when dealing with nonconvex models. When $\xi$ is nonconvex, the Parisi formula completely breaks down. To the best of our knowledge, until recently, it seems that there was no clear conjecture on what the limit of the free energy should be in this case. In \cite[Conjecture~2.6]{mourrat2019parisi}, it is proposed that results such as Theorem~\ref{t.visco and fe} should generalize to nonconvex models. Further development in \cite{mourrat2020free,mourrat2020nonconvex} have led to the following lower bound for the free energy of nonconvex models.

\begin{theorem}[\cite{mourrat2020free,mourrat2020nonconvex}] \label{t.lower bound}
    For every $t \geq 0$,
    \begin{equation*}
        \liminf_{N \to +\infty} \bar F_N(t) \geq f(t,0),
    \end{equation*}
    where $f : \R_+ \times \mcl Q(S^D_+) \to \R$ is the unique solution of 
    \begin{equation} 
        \begin{cases}
            \partial_t f  - \int \xi(\nabla f) = 0 \text{ on } \R_+ \times\mcl Q(S^D_+)\\
            f(0,\cdot) = \psi \text{ on } \mcl Q(S^D_+).
        \end{cases}
    \end{equation}
\end{theorem}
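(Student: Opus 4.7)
The plan is to follow the Hamilton-Jacobi strategy of \cite{mourrat2020free,mourrat2020nonconvex}, which extends the PDE viewpoint of Theorem~\ref{t.visco and fe} to nonconvex $\xi$. The main object is the enriched free energy $\bar F_N(t,\mu)$ indexed by a path $\mu \in \mcl Q(S^D_+)$, obtained by coupling $H_N$ to an independent Poisson-Dirichlet cascade whose overlap law is prescribed by $\mu$. It satisfies $\bar F_N(t,0) = \bar F_N(t)$, and the map $\mu \mapsto \bar F_N(t,\mu)$ is convex and Lipschitz on bounded subsets of $\mcl Q(S^D_+)$ with constants uniform in $N$.

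The central analytic input is an approximate Hamilton-Jacobi identity for $\bar F_N$. Gaussian integration by parts applied to the enriched Hamiltonian yields the cavity relation
\begin{equation*}
    \dr_t \bar F_N(t,\mu) = \E \langle \xi(R^{12}) \rangle_{t,\mu},
\end{equation*}
where $R^{12} = \sigma\tau^*/N$ is the overlap of two independent replicas under the enriched Gibbs measure. Crucially $\xi$ appears with its true value --- not its convex hull --- so no convexity assumption is used here. Combined with the Parisi-type asymptotic $\E\langle \xi(R^{12})\rangle_{t,\mu} = \int \xi(\nabla \bar F_N(t,\mu)) + o_N(1)$, extracted from the ultrametric structure of the overlap distribution together with Gaussian concentration of $\bar F_N$ uniform in $\mu$ on bounded sets, this yields an approximate HJ equation that $\bar F_N$ satisfies in an integrated sense.

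The second step leverages the convexity of $\psi$ on $\mcl Q(S^D_+)$. For convex initial data, the unique solution $f$ of the HJ equation admits the Hopf representation
\begin{equation*}
    f(t,0) = \sup_{r \in \mcl Q(S^D_+) \cap L^\infty} \Bigl\{ -\psi^*(r) + t \int \xi(r) \Bigr\},
\end{equation*}
which is valid for arbitrary continuous $\xi$. For each test $r$, one integrates the approximate HJ identity along a straight line in $\R_+ \times \mcl Q(S^D_+)$ from an appropriately chosen initial enrichment to $(t,0)$, and exploits convexity of $\bar F_N$ in $\mu$ to turn the remainder into a one-sided bound. Passing to the limit $N \to +\infty$, using $\bar F_N(0,\cdot) \to \psi$, and optimizing over $r$ produces $\liminf_N \bar F_N(t) \geq f(t,0)$.

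The principal obstacle is the infinite-dimensional functional setup on $\mcl Q(S^D_+)$: defining $\nabla$ in this setting, formulating the Hopf representation rigorously, and making the straight-line interpolation argument work all require the framework to be developed later in the paper following \cite{mourrat2020free,chenmourrat2023cavity}. A related technical difficulty is the uniform concentration of $\bar F_N$ and of $\nabla \bar F_N$ along the one-parameter family of test paths used in the Hopf comparison; without any convexity of $\xi$ to assist, this uniformity must come entirely from Gaussian concentration combined with a careful mollification of the path $\mu$.
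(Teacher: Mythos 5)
The paper does not prove this theorem itself; it is quoted from \cite{mourrat2020free,mourrat2020nonconvex}. So I compare your proposal against those references.

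Your outline gets the large-scale architecture right: the enriched free energy $\bar F_N(t,\mu)$, the cavity relation $\partial_t\bar F_N(t,\mu)=\E\langle\xi(R^{12})\rangle_{t,\mu}$ obtained by Gaussian integration by parts (and which, as you note, uses the true $\xi$ because of the $-Nt\xi(\sigma\sigma^*/N)$ correction), and the observation that the Hopf representation $f(t,0)=\sup_r\{-\psi^*(r)+t\int\xi(r)\}$ is available because the cascade transform $\psi$ is convex. These are all genuine ingredients of the cited proof.

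The serious gap is the claimed intermediate step
\begin{equation*}
\E\langle\xi(R^{12})\rangle_{t,\mu}=\int\xi(\nabla\bar F_N(t,\mu))+o_N(1),
\end{equation*}
which you attribute to ``the ultrametric structure of the overlap distribution together with Gaussian concentration.'' This is a synchronization statement: it requires that $R^{12}$ concentrate, conditionally on $\alpha\wedge\alpha'$, around a deterministic value identified with the gradient, strongly enough that the nonlinear functional $\xi$ commutes with the conditional expectation. Ultrametricity does not give this, and Gaussian concentration of the free energy gives Lipschitz continuity but not overlap concentration. Panchenko's multi-species synchronization, which would supply the missing concentration, is itself proved via Guerra interpolation and presupposes convexity of $\xi$. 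For nonconvex $\xi$ --- precisely the setting of the statement --- this step is unavailable and would essentially amount to having already identified the limit, which is the open problem Theorem~\ref{t.nonconvex fe} in this paper addresses conditionally.

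The actual proof in \cite{mourrat2020free,mourrat2020nonconvex} does not pass through any approximate Hamilton--Jacobi identity for $\bar F_N$. It is a direct Guerra-type interpolation: for a fixed test path $p\in\mcl Q(S^D_+)\cap L^\infty$, one interpolates the Hamiltonian between $(t,0)$ and $(0,t\nabla\xi\circ p)$, computes the $s$-derivative by Gaussian IBP, and observes that it reduces to $\E\langle\,\xi(R^{12})-\xi(q)-\nabla\xi(q)\cdot(R^{12}-q)\,\rangle_s$ with $q=p(\alpha\wedge\alpha')$. Controlling the sign of this Taylor remainder for nonconvex $\xi$ is the real content of \cite{mourrat2020nonconvex}; the input that replaces convexity is the decomposition $\xi(A)=\sum_k C^{(k)}\cdot A^{\otimes k}$ with $C^{(k)}\in S^{D^k}_+$ (the fact the present paper cites as \cite[Proposition~6.6]{mourrat2020free}), combined with a replica-positivity argument exploiting the Gram structure of $R^{12}$. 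This yields $\liminf_N\bar F_N(t)\geq\psi(t\nabla\xi\circ p)-t\int\theta(p)$ for each $p$, and then the convexity of $\psi$ together with the Hopf formula gives, as you indicate, $\sup_p\{\psi(t\nabla\xi\circ p)-t\int\theta(p)\}\geq f(t,0)$ by testing the inner infimum at $s=t\nabla\xi\circ p$. You should replace the synchronization claim by this interpolation lemma, and make the ``appropriately chosen initial enrichment'' explicit: it is $t\nabla\xi\circ p$, not an arbitrary straight line.
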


To identify the limit free energy of nonconvex models, a possible approach is to prove an upper bound for $\limsup_{N \to +\infty} \bar F_N(t)$ and compare it with $f(t,0)$. In Section~\ref{s.upper bound}, we will show that 
\begin{equation*}
    f(t,0) \leq \liminf_{N \to +\infty} \bar F_N(t)  \leq \limsup_{N \to +\infty} \bar F_N(t) \leq g(t,0),
\end{equation*}
where $g$ is the solution of an equation similar to \eqref{e. hj organization} (see \eqref{e.HJ nonconvex vector} below). We are able to phrase the upper bound this way thanks to the fact that the proofs of Theorem~\ref{t.symmetric optimizer matrix} and Theorem~\ref{t.symmetric optimizer vector} are formalized in the language of partial differential equations. In particular, we will show that under some unproven regularity assumption on $f$, $f(t,0)$ and $g(t,0)$ are equal. We refer to Section~\ref{ss. nonconvex hj} for a more precise discussion.
\subsection{Organization of the paper}
We only consider permutation-invariant models. That is, models with $\xi$ permutation-invariant and $P_N = P_1^{\otimes N}$ with $P_1$ permutation-invariant and compactly supported. Without loss of generality, we will assume that $P_1$ is supported inside the unit ball of $\R^D$. Excluding Section~\ref{s.upper bound}, $\xi$ is always assumed to be convex on $S^D_+$. 

In Section~\ref{s.enriched } we will give a precise definition of the functions $f$ and $\psi$ appearing in Theorem~\ref{t.visco and fe}, we will also explain the precise meaning of \eqref{e. hj organization}. In Section~\ref{s.hj} we will define an appropriate notion of solution for \eqref{e. hj organization} following \cite{chen2023viscosity}. In particular, in Section~\ref{s.enriched } we introduce a variant of the free energy $\bar F_N(t)$ which will depend on $t \in \R_+$ and an extra parameter $\mfk q \in \mcl Q(S^D_+)$. This enriched version of the free energy will be denoted $\bar F_N(t,\mfk q)$ and satisfies $\bar F_N(t,0) = \bar F_N(t)$. It is through the introduction of this extra parameter that it is possible to obtain a partial differential equation for the limiting objects. 
In section~\ref{s.generalities} we will introduce some basic notations on permutation-invariant matrices and paths that will be useful for the later sections. We make the elementary but important remark that the set of permutation-invariant matrices is isomorphic to $\R^2$ and the set of permutation-invariant vectors is isomorphic to $\R$. The bulk of our analysis is in Section~\ref{s. fe and hj}, in this section we start by showing that the enriched free energy is permutation-invariant, namely for every permutation $s \in \msc S_D$,
\begin{equation*}
    \bar F_N(t, (\mfk q_{dd'})_{1 \leq d,d \leq D}) = \bar F_N(t,(\mfk q_{s(d)s(d')})_{1 \leq d,d \leq D}).
\end{equation*}
Thanks to this property and \eqref{e. hj organization}, we can write a partial differential equation satisfied by $(t,\mfk q) \mapsto \lim_{N \to +\infty} \bar F_N(t,\mfk q)$ on the set of permutation-invariant paths. Since the set of permutation-invariant paths in $\mcl Q(S^D_+)$ is isomorphic to $\mcl Q(\R^2_+)$, we will deduce Theorem~\ref{t.symmetric optimizer matrix}. Moreover, when $\xi$ is further assumed to only depend on the diagonal coefficient of its argument, it is even possible to write a partial differential equation on the set of permutation-invariant paths in $\mcl Q(\R_+)^D$ which is isomorphic to $\mcl Q(\R_+)$, this will yield the variational formula \eqref{e.symmetric optimizer vector} of Theorem~\ref{t.symmetric optimizer vector}. Using a strict concavity property of the Parisi functional, in Section~\ref{s.uniqueness} we will deduce the existence and the uniqueness of an optimizer in the variational formula \eqref{e.symmetric optimizer vector}. Finally, in Section~\ref{s.upper bound} we assume that $\xi$ is permutation-invariant and depends only on the diagonal coefficients of its argument, but we do not assume that $\xi$ is convex. We will show that under those hypotheses if $\xi$ admits an absolutely convergent power series and  satisfies \eqref{e. covariance}, then 
\begin{equation*}
    \forall x \in \R^D_+, \; \xi(x) \leq \frac{1}{D} \sum_{d=1}^D \xi(x_d,\dots,x_d).
\end{equation*}
With this inequality, we will prove Theorem~\ref{t. upper bound} via a simple interpolation argument. To conclude, we will explore how this upper bound is related to the lower bound for the free energy of nonconvex models derived in \cite{mourrat2020free,mourrat2020nonconvex} and we will prove a rigorous version of Theorem~\ref{t.informal nonconvex fe}, namely Theorem~\ref{t.nonconvex fe}.
\subsection{Acknowledgement}
I warmly thank Jean-Christophe Mourrat for countless fascinating discussions on spin glasses and many helpful comments and insights during this project.
\section{The enriched free energy} \label{s.enriched }
The method we propose relies on the fact that the limit free energy $f(t) = \lim \bar F_N(t)$ solves a partial differential equation. At this stage, $f$ only depends on one parameter, in order to obtain a partial differential equation, we are going to introduce an extra parameter. The nature of this parameter will not be the same for models that satisfy \eqref{e. covariance} and models that satisfy \eqref{e. covariance} under the additional assumption that $\xi$ only depends on the diagonal coefficients of its argument. We will provide the construction for the first class of models and then explain the additional adjustments that can be made when $\xi$ only depends on the diagonal coefficients of its argument.

Here $S^D_+$ denotes the set of positive semi-definite symmetric matrices in $\R^{D \times D}$ and $S^D_{++}$ the set of positive symmetric matrices in $\R^{D \times D}$. We equip $R^{D \times D}$ with the order $A \leq B$ if and only $B-A \in S^D_+$. The extra parameter we introduce in the definition of the free energy belongs to the space
\begin{equation}
    \mcl Q(S^D_+) = \{ \mfk q : [0,1) \to S^D_+ \big| \; \text{$\mfk q$ is càdlàg and nondecreasing} \}.
\end{equation}
In general paths in $\mcl Q(S^D_+)$ are not defined at $u = 1$, but if $\mfk q \in \mcl Q(S^D_+) \cap L^\infty$ we can set
\begin{equation*}
    \mfk q(1) = \lim_{u \upa 1} \mfk q(u) \in S^D_+.
\end{equation*}
We let $\mfk R$ be a Poisson Dirichlet cascade, briefly $\mfk R$ is a specific random probability measure on the unit sphere of a separable Hilbert space $(\mfk H, \wedge)$, such that given two independent random variables $\alpha,\alpha' \in \mfk H$ sampled from $\mfk R$, the random variable $\alpha \wedge \alpha'$ is uniformly distributed in $[0,1]$ (see \cite[Section~2]{pan} and \cite[Section~4]{chenmourrat2023cavity}). We let $\langle \cdot \rangle_{\mfk R}$ denote the expectation with respect to $\mfk R^{\otimes \mathbb{N}}$ and $\mfk U$ the support of $\mfk R$. By construction of $\mfk R$, almost surely $\mfk U$ is an ultrametric set, that is almost surely for every $\alpha^1,\alpha^2,\alpha^3 \in \mfk U$,
\begin{equation*}
    \alpha^1 \wedge \alpha^3 \geq \min \left( \alpha^1 \wedge \alpha^2, \alpha^2 \wedge \alpha^3\right).
\end{equation*}

Let $\mfk q \in \mcl Q(S^D_+) \cap L^\infty$, according to \cite[Propsoition~4.1]{chenmourrat2023cavity} $\mfk R$-almost surely, there exists a $\R^D$-valued centered Gaussian process $(w^{\mfk q}(\alpha))_{\alpha \in \mfk U}$ such that
    \begin{equation*}
        \E \left[  w^{\mfk q}(\alpha) \left( w^{\mfk q}(\alpha')\right)^*\right]  = \mfk q(\alpha \wedge \alpha').
    \end{equation*}
    Let $N \geq 1$, and consider $(W_N^{\mfk q}(\alpha))_{\alpha \in \mfk U}$ be a $(\R^{D})^N$-valued random process whose coordinates are independent and have the same law as $(w^{\mfk q}(\alpha))_{\alpha \in \mfk U}$. We set
    \begin{equation} \label{e. enriched fe}
        F_N(t,\mfk q) = -\frac{1}{N} \log \iint \exp(H_N^{t,\mfk q}(\sigma,\alpha)) dP_N(\sigma) d \mfk R(\alpha),
    \end{equation}
    where
    \begin{equation*}
        H_N^{t,\mfk q}(\sigma,\alpha) = \sqrt{2t} H_N(\sigma) -t N \xi \left(\frac{\sigma \sigma^* }{N}\right) + \sqrt{2} W_N^\mfk q(\alpha) \cdot \sigma - \mfk q(1) \cdot \sigma \sigma^*.
    \end{equation*}
   We let $\bar F_N(t,\mfk q) = \E F_N(t,\mfk q)$, the function $\bar F_N : \R_+ \times \left( \mcl Q(S^D_+) \cap L^\infty \right) \to \R$ is Lipschitz, more precisely according to \cite[Proposition~5.1]{chenmourrat2023cavity}, we have 
   \begin{equation} \label{e. lipschitz free energy}
       |\bar F_N(t,\mfk q) - \bar F_N(t',\mfk q')| \leq |\mfk q- \mfk q'|_{L^1} +|t-t'| \sup_{|a| \leq 1} |\xi(a)|.
   \end{equation}
   Here, $|a|$ denotes the Frobenius norm of $a \in \R^{D \times D}$, that is the norm associated to the scalar product
   \begin{equation*}
       a \cdot b = \sum_{d,d'=1}^D a_{dd'}b_{dd'}.
   \end{equation*}
   Using \eqref{e. lipschitz free energy}, we can then extend $\bar F_N$ by continuity to $\R_+ \times \left( \mcl Q(S^D_+) \cap L^1 \right)$. Note that if $0$ denotes the null path, we have $\bar F_N(t,0) = \bar F_N(t)$. So the function we have defined is indeed an extension of the free energy. 
   
   Let us now introduce the notion of differentiability that we will use to formulate the partial differential equation \eqref{e. hj organization}. Given $\mfk q \in \mcl Q(S^D_+) \cap L^2$, we let 
   \begin{equation*}
       \text{Adm}(\mfk q) = \{ \kappa \in L^2 \big| \; \exists r > 0, \; \forall t \in [0,r], \; \mfk q+t \kappa \in \mcl Q(S^D_+) \cap L^2 \}.
   \end{equation*}
   The set $\text{Adm}(\mfk q)$ is the set of admissible directions at $\mfk q$. Let $g : \mcl Q(S^D_+) \cap L^2 \to \R$, we say that $g$ is Gateaux differentiable at $\mfk q \in \mcl Q(S^D_+) \cap L^2$ when for every $\kappa \in \text{Adm}(\mfk q)$, the following limit exists 
   \begin{equation*}
       g'(\mfk q ; \kappa) = \lim_{t \downarrow 0} \frac{g(\mfk q + t \kappa) - g(\mfk q)}{t},
   \end{equation*}
   and there exists a unique $d \in L^2$ such that for every $\kappa \in \text{Adm}(\mfk q)$, 
   \begin{equation*}
        g'(\mfk q ; \kappa) = \langle d, \kappa \rangle_{L^2}.
   \end{equation*}
   In this case, the vector $d \in L^2$ is denoted $\nabla g(\mfk q)$ and we call it the Gateaux derivative of $g$ at $\mfk q$. Note that $\nabla g(\mfk q) \in L^2$, so we can make sense of expressions of the form $\int_0^1 \xi( \nabla g(\mfk q)(u)) \d u$, provided that the integral converges. Such expressions will be simply abbreviated by $\int \xi(\nabla g(\mfk q))$ in what follows.
   
   We define $\mcl Q_\upa(S^D_+)$ as the set of paths $\mfk q \in \mcl Q(S^D_+) \cap L^2$ such that $\mfk q(0) = 0$, and there exists a constant $c > 0$ such that $u \mapsto \mfk q(u) -c u \text{id}_D$ is nondecreasing and for all $u < v$, 
\begin{equation*}
   \text{Ellipt}(\mfk q(v)-\mfk q(u)) \leq \frac{1}{c}.
\end{equation*}
 Here, for $m \in S^D_{++}$, $\text{Ellipt}(m)$ denotes the ratio of the biggest and smallest eigenvalue of $m$. When $\xi$ is convex, which will always be the case except in Section~\ref{s.upper bound}, it can be shown that $\bar F_N$ converges pointwise to some Lipschitz function $f: \R_+ \times \left( \mcl Q(S^D_+) \cap L^1 \right) \to \R$. Furthermore, according to \cite[Propositions~7.2~\&~8.6]{chenmourrat2023cavity}, $f : \R_+ \times (\mcl Q(S^D_+) \cap L^2) \to \R$ is Gateaux differentiable on $(0,+\infty) \times (\mcl Q_\upa(S^D_+) \cap L^\infty)$ and satisfies
\begin{equation} \label{e. HJ smooth}
    \begin{cases}
        \partial_t f - \int \xi(\nabla f) = 0 &\text{ on } (0,+\infty) \times (\mcl Q_\upa(S^D_+) \cap L^\infty) \\
        f(0,\cdot) = \psi  &\text{ on } \mcl Q_\upa(S^D_+) \cap L^\infty,
    \end{cases}
\end{equation}
where we have defined $\psi = \lim_{N \to +\infty} \bar F_N(0,\cdot)$. In fact, since we have assumed that $P_N = P_1^{\otimes N}$ we simply have
\begin{equation} \label{e.def psi}
    \psi = \bar F_1(0,\cdot).
\end{equation}
The fact that $f$ is a solution of \eqref{e. HJ smooth} at every point where it is differentiable is a consequence of the cavity computations conducted  in \cite{chenmourrat2023cavity} (see \cite[Propostion~7.2]{chenmourrat2023cavity}), this result is robust and holds true even when $\xi$ is nonconvex. The real miracle is the fact that $f$ is differentiable at every (nice) points, this is proven in \cite[Propostion~8.6]{chenmourrat2023cavity} and relies strongly on the fact that $\xi$ is convex on $S^D_+$. We stress that in general, nonlinear partial differential equations such as  \eqref{e. HJ smooth} do not always have a differentiable solution. Therefore, to study \eqref{e. HJ smooth} and other similar equations, we will need to appeal to the notion of viscosity solution. this notion will be introduced precisely in Section~\ref{s.hj}. Also note, that according to \cite[Proposition~3.6]{chenmourrat2023cavity}, we have $\nabla f(t,\mfk q) \in \mcl Q(S^D_+) \cap L^2$. This means that the behavior of \eqref{e. HJ smooth} is only governed by the restriction of $\xi$ to $S^D_+$, this is why we can assume only that $\xi$ is convex on $S^D_+$.

In this framework, the Parisi formula can be seen as a consequence of a variational representation for the viscosity solution of \eqref{e. HJ smooth}. According to \cite[Theorem~4.6~(2)]{chen2022hamilton}, for $(t,\mfk q) \in \R_+ \times \mcl Q(S^D_+) \cap L^2$, we have
\begin{equation*}
    f(t,\mfk q)  = \sup_{\mfk p \in \mcl Q(S^D_+) \cap L^\infty} \inf_{\mfk r \in \mcl Q(S^D_+) \cap L^\infty} \left\{ \psi(\mfk q + \mfk p) - \langle \mfk p, \mfk r \rangle_{L^2} + t \int_0^1 \xi( \mfk r(u) ) \d u\right\}.
\end{equation*}
At $\mfk q = 0$ this yields, $\lim_{ N \to +\infty} \bar F_N(t) = \sup_{\mfk p \in \mcl Q(S^D_+)} \mcl P_{t,\xi,P_1}(\mfk p)$, where 
\begin{equation} \label{e. parisi functionnal}
    \mcl P_{t,\xi,P_1}(\mfk p) = \psi(\mfk p) - \sup_{\mfk r \in \mcl Q(S^D_+) \cap L^\infty} \left\{  \langle \mfk p, \mfk r \rangle_{L^2} - t \int_0^1 \xi( \mfk r(u) ) \d u\right\}.
\end{equation}
As explained in the proof of \cite[Proposition~8.1]{chenmourrat2023cavity}, one can recover the “usual” Parisi functional from $\mcl P_{t,\xi,P_1}$ by plugging in paths of the form $\nabla \xi \circ \mfk p$. If we define $\theta(a) = a  \cdot \nabla \xi(a)  - \xi(a)$, we have 
\begin{equation*}
    \mcl P_{t,\xi,P_1}(t \nabla \xi \circ \mfk p) = \psi(t \nabla \xi \circ \mfk p) - t \int_0^1 \theta(\mfk p(u)) \d u.
\end{equation*}

Before moving to the next section, we explain the adjustments that can be made to obtain a simpler Hamilton-Jacobi equation when $\xi$ is assumed to only depend on the diagonal coefficients of its argument. Henceforth, we will refer to models with this additional property as diagonal models. For those models, we identify the function $A \mapsto \xi(A)$ defined on $\R^{D \times D}$ and the function $x \mapsto \xi(\text{diag}(x))$ defined on $\R^D$. Recall that heuristically, the paths $\mfk q$ are to be understood as encoding the limiting distribution of the overlap matrix, $\frac{\sigma \tau^*}{N}$. When the model is diagonal, we do not need to keep track of the full overlap matrix and encoding the distribution of the diagonal coefficients is enough. To this end, let us introduce another space of paths,
\begin{equation*}
    \mcl Q(\R^D_+) = \{ q : [0,1) \to \R_+^D \big| \; \text{$q$ is càdlàg nondecreasing} \}.
\end{equation*}
Here, by $q$ is nondecreasing we mean that for every $u \leq v$, $q(v) - q(u) \in \R^D_+$. Note that $\mcl Q(\R^D_+)$ is isomorphic to the  subset of $\mcl Q(S^D_+)$ composed of the paths that are valued in the set of diagonal matrices. If $q \in \mcl Q(\R^D_+)$ we denote by $\text{diag}(q)$ the associated diagonal matrix valued path. We then define $\mcl Q_\upa(\R^D_+)$ as the set of paths $q \in \mcl Q(\R^D_+)$ such that the path $\text{diag}(q)$ belongs to $\mcl Q_\upa(S^D_+)$. In order to not get mixed up with the previous definition of enriched free energy, we will denote by $\bar F^\text{diag}_N$ the restriction of $\bar F_N$ to $\mcl Q(\R^D_+)$. That is, for every $(t,q) \in \R_+ \times (\mcl Q(\R^D_+) \cap L^1)$,
\begin{equation*}
    \bar F^\text{diag}_N(t,q) = \bar F_N(t,\text{diag}(q)).
\end{equation*}
As previously, the sequence $(\bar F^\text{diag}_N(t,q))_N$ converges to $f(t,\text{diag}(q))$. We let $f^\text{diag}(t,q) =f(t,\text{diag}(q))$, the function $f^\text{diag}$ is Gateaux differentiable on $(0,+\infty) \times \left(\mcl Q_\upa(\R^D_+) \cap L^\infty \right)$ and solves 
\begin{equation} \label{e. HJ smooth diag}
    \begin{cases}
        \partial_t f^\text{diag} - \int \xi(\nabla f^\text{diag}) = 0 &\text{ on } (0,+\infty) \times (\mcl Q_\upa(\R^D_+) \cap L^\infty) \\
        f^\text{diag}(0,\cdot) = \psi^\text{diag}  &\text{ on } \mcl Q_\upa(\R^D_+) \cap L^\infty,
    \end{cases}
\end{equation}
where we have defined $\psi^\text{diag}(q)= \psi(\text{diag}(q))$. This can be checked by differentiating $f^\text{diag}$ and using \eqref{e. HJ smooth}. The point of this derivation is that we can now express the limit free energy of diagonal models as the value at $(t,0)$ of the solution of a Hamilton-Jacobi equation on $\mcl Q(\R^D_+)$ rather than $\mcl Q(S^D_+)$. This yields a different variational representation for the limit free energy.

\section{Hamilton-Jacobi equations on closed convex cones} \label{s.hj}

As explained above, Hamilton-Jacobi equations on closed convex cones, such as \eqref{e. HJ smooth} and \eqref{e. HJ smooth diag}, will play an important role in this paper. Here, we summarize some known results on those equations. We will focus on the theory of Hamilton-Jacobi equations on closed convex cones in finite dimensional Hilbert spaces. But, most of the result exposed below remain valid in infinite dimensional Hilbert spaces \cite{chen2022hamilton}. We will use both settings in the following sections.

Let $(\mcl H, \langle \cdot, \cdot \rangle_{\mcl H})$ be a finite dimensional Hilbert space. Let $\mcl C \subset \mcl H$, we say that $\mcl C$ is a closed convex cone when $\mcl C$ is a closed set and for every $s,t \geq 0$, $x,y \in \mcl C$, we have 
\begin{equation*}
    sx +ty \in \mcl C.
\end{equation*}
In what follows, we fix a nonempty closed convex cone $\mcl C \subset \mcl H$. We will assume that the interior of $\mcl C$ is nonempty and that the only vector belonging to $\mcl C$ and $-\mcl C$ simultaneously is $0$. When dealing with infinite dimensional Hilbert spaces like in \cite{chen2022hamilton}, $\mcl C$ is allowed to have empty interior, but for simplicity we do not consider this possibility in this section.

Let $\psi : \mcl C \to \R$ be a Lipschitz function and $H : \mcl H \to \R$ be such that $H \big|_{\mcl C}$ is locally Lipschitz. We are interested in equations of the form 
\begin{equation} \label{e.hj}
    \begin{cases}
        \partial_t v - H(\nabla v) = 0 \text{ on } (0,+\infty) \times \mathring{\mcl C} \\
        v(0,\cdot) = \psi \text{ on } \mcl C.
    \end{cases}
\end{equation}
When $\mcl C = \R^d$, there is already a rich theory for equations like \eqref{e.hj} \cite[Section~10]{evans}. One of the main accomplishments of this theory  is the introduction of a suitable notion of solution. Equation \eqref{e.hj} may not have any differentiable solution \cite[Section~3.3~Example~6]{evans}. But, by introducing the notion of viscosity solutions (see Definition~\ref{d.visco} below), it is possible to guarantee existence and uniqueness of solutions under some mild conditions, provided that we allow non-differentiable functions to solve \eqref{e.hj}.

Most of the theory of viscosity solutions on $\R^d$ can be adapted for Hamilton-Jacobi equations on closed convex cones \cite{chen2023viscosity}. In principle, when $\R^d$ is replaced by $\mcl C$, some boundary conditions should be enforced to guarantee that \eqref{e.hj} admits a unique solution. But this requirement can be bypassed  provided that some monotony conditions on $\psi$ and $H$ hold.

Given $\mcl D \subset \mcl C$, a function $\phi : (0,+\infty) \times \mcl D \to \R$ is said to be differentiable at $(t,x) \in \mcl (0,+\infty) \times \mcl D$ when there exists a unique $(a,p) \in \R \times \mcl H$ such that the following holds as $(s,y) \in (0,+\infty) \times \mcl D$ tends to $(t,x)$,
\begin{equation*}
    \phi(s,y) = \phi(t,x) + (s-t)a + \langle x-y, p \rangle_{\mcl H} + O \left(|s-t|^2+|x-y|^2_{\mcl H} \right).
\end{equation*}
In this case, $(a,p)$ is denoted $(\partial_t \phi(t,x), \nabla \phi(t,x))$. When $\phi$ is differentiable at every point in $(0,+\infty) \times \mcl D$ and the function $(t,x) \mapsto (\partial_t \phi(t,x), \nabla \phi(t,x))$ is continuous, we say that $\phi$ is smooth. Note that the notions of differentiability and smoothness defined here make sense even when $x$ does not belong to the interior of $\mcl C$ and does not requires $\mcl D$ to be an open set. 

\begin{definition}[Viscosity solutions] \label{d.visco}
    \leavevmode
    \begin{enumerate}
    
        \item An upper semi-continuous function $v : \R_+ \times \mcl C \to \R$ is a viscosity subsolution of \eqref{e.hj} when for every $(t,x) \in (0,+\infty) \times \mathring{\mcl C}$ and every smooth function $\phi : (0,+\infty) \times \mathring{\mcl C} \to \R$ such that $v- \phi$ has a local maximum at $(t,x)$, we have 
        \begin{equation*}
            \partial_t \phi(t,x) - H(\nabla \phi(t,x)) \leq 0.
        \end{equation*}

        \item A lower semi-continuous function $v : \R_+ \times \mcl C \to \R$ is a viscosity supersolution of \eqref{e.hj} when for every $(t,x) \in (0,+\infty) \times \mathring{\mcl C}$ and every smooth function $\phi : \R_+ \times \mathring{\mcl C} \to \R$ such that $v- \phi$ has a local minimum at $(t,x)$, we have 
        \begin{equation*}
            \partial_t \phi(t,x) - H(\nabla \phi(t,x)) \geq 0.
        \end{equation*}
        \item A continuous function $v : \R_+ \times \mcl C \to \R$ is a viscosity solution when it is both a viscosity subsolution and a viscosity supersolution.
    \end{enumerate}
\end{definition}

We define the dual cone $\mcl C^*$ of the cone $\mcl C$ by setting,
\begin{equation} \label{e. def dual cone}
    \mcl C^* = \{ x \in \mcl H \big| \; \forall y \in \mcl C, \; \langle x,y \rangle_\mcl H \geq 0 \}.
\end{equation}
The dual cone $\mcl C^*$ is a closed convex cone and satisfies $(\mcl C^*)^* = \mcl C$. Given $\mcl A, \mcl B \subset \mcl H$, we say that $g : \mcl A \to \R$ is $\mcl B$-nondecreasing when for every $x,x' \in \mcl A$, if $x-x' \in \mcl B$ then $g(x) \geq g(x')$. Note that, if $h : \mcl C \to \R$ is differentiable, then $h$ is $\mcl C^*$-nondecreasing if and only if $\nabla h(\mcl C) \subset \mcl C$.  We define $\mcl V(\mcl C)$ to be the set of functions $v : \mathbb{R}_+ \times \mcl C \to \R$, such that for every $t \geq 0$, $v(t,\cdot)$ is $\mcl C^*$-nondecreasing and such that the following estimates hold
\begin{equation}
    \sup_{\substack{t > 0 \\ x \in \mcl C}} \frac{|v(0,x)-v(t,x)|}{t} < +\infty \text{ and } \sup_{t > 0} \| v(t,\cdot)\|_{\text{Lip}}< +\infty.
\end{equation}
Here, $\| v(t,\cdot)\|_{\text{Lip}}$ denotes the optimal Lipschitz constant of $v(t,\cdot)$. The following theorem is extracted from \cite[Theorem~1.2]{chen2023viscosity}.

\begin{theorem}[\cite{chen2023viscosity}] \label{t. hj cone well posed}
    Let $\psi : \mcl C \to \R$ be a Lipschitz and $\mathcal{C}^*$-nondecreasing function and let $H : \mathcal{H} \to \R$ be such that $H \big|_{\mcl C}$ is $\mcl C^*$-nondecreasing and locally Lipschitz. Then, the Hamilton-Jacobi equation 
\begin{equation} \label{_.hj}
    \begin{cases}
        \partial_t v - H(\nabla v) = 0 \text{ on } (0,+\infty) \times \mathring{\mcl C} \\
        v(0,\cdot) = \psi \text{ on } \mcl C,
    \end{cases}
\end{equation}
admits a unique viscosity solution in $\mcl V(\mcl C)$ that will be denoted $v$.
\end{theorem}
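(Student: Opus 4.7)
My plan is to decouple existence and uniqueness in the standard viscosity-solutions manner, with the cone-boundary subtleties being the new ingredient.

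\textbf{Uniqueness via comparison.} The goal is to show that if $u, v \in \mcl V(\mcl C)$ are respectively a subsolution and a supersolution with $u(0,\cdot) \leq v(0,\cdot)$, then $u \leq v$. Starting from the classical doubling-of-variables, I introduce
\[
    \Phi_{\varepsilon,\eta}(t,s,x,y) = u(t,x) - v(s,y) - \frac{|x-y|_{\mcl H}^2+|t-s|^2}{2\varepsilon} - \eta t,
\]
and argue that any maximizer $(t_\varepsilon, s_\varepsilon, x_\varepsilon, y_\varepsilon)$ must satisfy $t_\varepsilon, s_\varepsilon > 0$ for small $\varepsilon$, thanks to the uniform bound on $(v(t,\cdot)-v(0,\cdot))/t$ built into $\mcl V(\mcl C)$. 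The new wrinkle is that $x_\varepsilon, y_\varepsilon$ might sit on $\partial \mcl C$, where the viscosity definition does not test. This is overcome by exploiting $\mcl C^*$-monotonicity: one adds a penalty $-\delta \langle e, x + y\rangle_{\mcl H}$ with $e \in \mathring{\mcl C^*}$ to force boundedness of the maximizer in $\mcl C$, and then shifts arguments by a small vector $e' \in \mathring{\mcl C}$ to push the maximizer into $\mathring{\mcl C}$. The $\mcl C^*$-monotonicity of $u$ and $v$ ensures these modifications do not decrease the supremum of $\Phi_{\varepsilon, \eta}$, so the maximizer may be chosen in $\mathring{\mcl C}$. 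At such an interior maximum, the standard quadratic test functions yield the incompatible inequalities
\[
    \eta + \frac{t_\varepsilon - s_\varepsilon}{\varepsilon} \leq H\Bigl( \frac{x_\varepsilon - y_\varepsilon}{\varepsilon}\Bigr) \quad \text{and} \quad \frac{t_\varepsilon - s_\varepsilon}{\varepsilon} \geq H\Bigl( \frac{x_\varepsilon - y_\varepsilon}{\varepsilon}\Bigr),
\]
a contradiction. Sending $\delta \to 0$, then $\varepsilon \to 0$, then $\eta \to 0$ closes the argument.

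\textbf{Existence.} With comparison in hand, I would use Perron's method. The function $(t,x) \mapsto \psi(x) - Lt$ is a subsolution and $(t,x) \mapsto \psi(x) + Lt$ is a supersolution for $L$ chosen large enough to dominate $H$ on the range of relevant gradients, a choice made possible by the $\mcl C^*$-monotonicity of $\psi$ (which confines $\nabla \psi$ to $\mcl C$) together with local Lipschitzness of $H$ on $\mcl C$. The envelope
\[
    v(t,x) = \sup\bigl\{ w(t,x) : w \in \mcl V(\mcl C) \text{ is a subsolution with } w(0,\cdot) \leq \psi \bigr\}
\]
is then a viscosity solution: its upper semicontinuous envelope is a subsolution by a classical supremum argument, and its lower semicontinuous envelope is a supersolution by the standard bump-function construction, both carried out only at interior points where the definition applies. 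Membership $v \in \mcl V(\mcl C)$ follows from the barriers serving as uniform Lipschitz bounds, and $\mcl C^*$-monotonicity is preserved by the supremum.

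The principal obstacle is the boundary handling in the comparison principle: making rigorous the claim that the doubling-of-variables maximizer can be relocated to $\mathring{\mcl C}$ without loss of value. This step is the one place where the $\mcl C^*$-monotonicity hypotheses on $H$ and $\psi$ genuinely enter, and they are precisely what replaces the boundary conditions one would otherwise need to impose on $\partial \mcl C$ to get well-posedness.
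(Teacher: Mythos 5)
This theorem is not proved in the paper; it is quoted directly from \cite{chen2023viscosity} ("The following theorem is extracted from [Theorem~1.2]\ldots"), so there is no internal proof to compare against. Your overall architecture — doubling of variables for comparison, Perron's method for existence — is the standard skeleton and is indeed the one followed in the cited reference, but the boundary-handling step, which you rightly identify as the crux, has a genuine gap.

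The claim that $\mcl C^*$-monotonicity of $u$ and $v$ lets you shift the doubling-of-variables maximizer into $\mathring{\mcl C}$ without losing value does not survive inspection. If you shift both coordinates $x \mapsto x + e'$, $y \mapsto y + e'$ with $e' \in \mcl C^*$, then $u(t,x+e') \geq u(t,x)$ \emph{and} $v(s,y+e') \geq v(s,y)$, so $u(t,x+e') - v(s,y+e')$ can go either way: the monotonicity you are invoking works against you in the $v$-term. If you instead shift only one coordinate, the quadratic penalty $|x-y|^2/2\varepsilon$ changes by an amount that is $O(|e'|)$ but with a coefficient blowing up as $\varepsilon \to 0$, so the optimality of $\Phi_{\varepsilon,\eta}$ is again not preserved at the scales you need. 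There is a further conflation: you take $e' \in \mathring{\mcl C}$ in order to land in the interior, but monotonicity acts along $\mcl C^*$, so you would in any case need $e' \in \mathring{\mcl C} \cap \mcl C^*$; this intersection is nonempty for pointed cones with nonempty interior, but it is a separate fact that should be recorded. In short, some genuinely new mechanism is needed to handle the boundary — for instance, showing that for a $\mcl C^*$-nondecreasing subsolution the relevant superdifferential can always be taken with gradient component in $\mcl C$ even at boundary points, or translating the \emph{function} rather than the maximizer so that the shifted function is a subsolution up to and including $\partial\mcl C$ — rather than a straightforward relocation of the doubled maximizer. The existence half via Perron with the barriers $\psi \pm Lt$ is sound, with the one caveat that $\nabla\psi \in \mcl C$ at points of differentiability (not everywhere) and the argument must be phrased in terms of sub- and superdifferentials, which is standard.
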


Theorem~\ref{t. hj cone well posed} ensures that \eqref{e.hj} is well posed as long as $\psi$ and $H$ are $\mcl C^*$-nondecreasing. When $H$ is convex and some additional requirements are put on the cone $\mcl C$ it is possible to obtain an explicit representation for the unique viscosity solution. We refer to this representation as the Hopf-Lax representation of the viscosity solution.

Given $g : \mcl C \to \R \cup \{+\infty\}$, we define the (monotone) convex conjugate of $g$ over $\mcl C$ by
\begin{equation} \label{e.convex conjugate}
    g^*(y) = \sup_{x \in \mcl C} \left\{ \langle x,y \rangle_{\mcl H} - g(x) \right\}.
\end{equation}
When $\mcl C = \R^d$, the functions that satisfy $g^{**} = g$ are exactly the lower semicontinuous convex functions \cite[Section~12]{rockafellar1970convex}. If we let 
\begin{equation*}
    g^{**}(x) = \sup_{y \in \mcl C^*} \left\{ \langle x,y \rangle_{\mcl H} - g(y) \right\}
\end{equation*}
denote the convex conjugate of $g^*$ over $\mcl C^*$, we have that $g^{**}$ is lower semi-continuous, convex and $\mcl C$-nondecreasing. So, the functions satisfying $g = g^{**} $ must be lower semicontinuous, convex and $\mcl C$-nondecreasing. 

\begin{definition}
    We say that a closed convex cone $\mcl C$ has the Fenchel-Moreau property when for every $g : \mcl C \to \R \cup\{+\infty\}$ not identically equal to $+\infty$, we have $g^{**} = g$ if and only if $g$ is lower semicontinuous, convex and $\mcl C$-nondecreasing.
\end{definition}

\begin{theorem} [\cite{chen2023viscosity}]
    Assume that $\mcl C$ has the Fenchel-Moreau property. Let $\psi : \mcl C \to \R$ be a Lipschitz and $\mathcal{C}^*$-nondecreasing function and let $H : \mathcal{H} \to \R$ be such that $H \big|_{\mcl C}$ is $\mcl C^*$-nondecreasing, locally Lipschitz and \emph{convex}. Then, the unique viscosity solution $v$ of \eqref{_.hj} admits the Hopf-Lax representation, that is for every $(t,x) \in \R_+ \times \mcl C$,
    \begin{equation*}
        v(t,x) = \sup_{y \in \mcl C} \inf_{z \in \mcl C} \left\{ \psi(x+y) - \langle y,z \rangle_{\mcl H}+ tH(z) \right\}.
    \end{equation*}
\end{theorem}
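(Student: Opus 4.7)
The plan is to define the candidate
\begin{equation*}
    w(t,x) := \sup_{y \in \mcl C}\inf_{z \in \mcl C}\bigl\{\psi(x+y) - \langle y,z\rangle_{\mcl H} + tH(z)\bigr\}
\end{equation*}
and prove that $w$ coincides with the unique viscosity solution $v$ supplied by Theorem~\ref{t. hj cone well posed}; by that uniqueness result it is enough to verify $w \in \mcl V(\mcl C)$ with $w(0,\cdot) = \psi$, and that $w$ is itself a viscosity solution of \eqref{_.hj}. Carrying out the inner infimum yields, for $t > 0$, the equivalent Hopf representation $w(t,x) = \sup_{y \in \mcl C}\{\psi(x+y) - tH^*(y/t)\}$, where $H^*$ is the monotone conjugate of $H$ over $\mcl C$ from \eqref{e.convex conjugate}. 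The membership $w \in \mcl V(\mcl C)$, including $\mcl C^*$-monotonicity in $x$ and the Lipschitz bounds, is inherited from the corresponding properties of $\psi$. The initial condition $w(0, x) = \psi(x)$ follows from the observation that at $t = 0$ the inner infimum equals $0$ for $y = 0$ and $-\infty$ otherwise, since $\mcl C \cap (-\mcl C^*) = \{0\}$ (a consequence of $\mcl C \cap (-\mcl C) = \{0\}$ together with $\mcl C \subseteq \mcl C^{**}$).

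The analytic heart is a dynamic programming / semigroup identity: for $0 \leq s \leq t$,
\begin{equation*}
    w(t, x) = \sup_{y \in \mcl C}\inf_{z \in \mcl C}\bigl\{w(s, x+y) - \langle y,z\rangle_{\mcl H} + (t-s)H(z)\bigr\}.
\end{equation*}
Via the Hopf representation this reduces to showing
\begin{equation*}
    \inf\bigl\{sH^*(y_2/s) + (t-s)H^*(y_1/(t-s)) : y_1, y_2 \in \mcl C,\; y_1 + y_2 = y\bigr\} = tH^*(y/t),
\end{equation*}
which is Jensen's inequality for the convex function $H^*$, with equality realised at $y_1 = (t-s)y/t$ and $y_2 = sy/t$ (both in $\mcl C$); convexity of $H^*$ is immediate from its definition as a supremum of affine functions.

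With the DPP in hand, the viscosity properties follow by standard test-function arguments. For the supersolution property, if $w - \phi$ has a local minimum at $(t, x) \in (0, \infty) \times \mathring{\mcl C}$, testing the DPP over $[t - \eps, t]$ with the choice $y = \eps p$ for arbitrary $p \in \mcl C$ and letting $\eps \downarrow 0$ gives $\partial_t \phi(t,x) \geq \langle \nabla \phi(t,x), p\rangle_{\mcl H} - H^*(p)$; supremising over $p \in \mcl C$ and invoking the Fenchel--Moreau property of $\mcl C$ (which identifies this supremum with $H(\nabla \phi(t,x))$, using that $\nabla \phi(t,x) \in \mcl C$ because $w$ is $\mcl C^*$-nondecreasing) yields $\partial_t \phi(t,x) - H(\nabla \phi(t,x)) \geq 0$. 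For the subsolution property at a local maximum, one instead extracts $\delta$-near optimizers $y_\eps \in \mcl C$ in the DPP, sets $p_\eps := y_\eps/\eps$, and passes to the limit to obtain $\partial_t \phi(t,x) \leq \langle \nabla \phi(t,x), p\rangle_{\mcl H} - H^*(p)$ for some $p \in \mcl C$; the definition of $H^*$ evaluated at $z = \nabla \phi(t,x)$ then gives $\langle \nabla \phi(t,x), p\rangle_{\mcl H} - H^*(p) \leq H(\nabla \phi(t,x))$, closing the inequality. The main obstacle I anticipate is controlling the near-optimizers in the subsolution step: the scaled sequence $(p_\eps)$ must be shown to remain bounded (a coercivity argument using the Lipschitz bound on $\psi$ to rule out blow-up of $H^*(p_\eps)$) before the limit can be extracted, and one must separately confirm that $\nabla \phi(t,x) \in \mcl C$ at extremum points in order for the Fenchel--Moreau and definitional identities to apply as stated.
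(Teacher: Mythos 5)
The paper cites this result from~\cite{chen2023viscosity} without supplying a proof, so there is no in-paper argument to compare against; I evaluate your proposal on its own terms. Your strategy---reduce to the Hopf form $w(t,x)=\sup_{y\in\mcl C}\{\psi(x+y)-tH^*(y/t)\}$, establish a semigroup (DPP) identity via Jensen's inequality for the convex $H^*$, derive the viscosity inequalities by testing the DPP at extremum points, and close with the comparison principle of Theorem~\ref{t. hj cone well posed}---is the standard route and is sound in outline. The Jensen computation for the DPP is correct, and the verification that $\nabla\phi(t,x)\in\mcl C$ at both sub- and supersolution test points (via the $\mcl C^*$-monotonicity of $w$) does hold.

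Two points need repair before the argument is complete. First, the Fenchel--Moreau invocation in the supersolution step does not apply as stated: the biconjugate defined after \eqref{e.convex conjugate} takes a supremum over $\mcl C^*$, not $\mcl C$, and the cone's Fenchel--Moreau property concerns $\mcl C$-nondecreasing functions, whereas $H$ is assumed $\mcl C^*$-nondecreasing; in general these are different classes. The identity you actually need, $\sup_{p\in\mcl C}\{\langle q,p\rangle_{\mcl H}-H^*(p)\}=H(q)$ for $q=\nabla\phi(t,x)\in\mcl C$, is better argued directly: the inequality $\leq$ is Fenchel's inequality (using $H^*(p)\geq\langle p,q\rangle_{\mcl H}-H(q)$ for $q\in\mcl C$), and equality holds because $\mcl C^*$-monotonicity of $H$ forces every subgradient of $H$ at $q\in\mathring{\mcl C}$ to lie in $\mcl C$, where the supremum is attained. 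Second, the coercivity obstacle you flag in the subsolution step is genuine; moreover, the Lipschitz-in-$t$ estimate needed for $w\in\mcl V(\mcl C)$ is not ``inherited from $\psi$'' alone but also requires a superlinear lower bound on $H^*$ on $\mcl C$ (equivalently, finiteness of $H$), which should be made explicit. With these two repairs the proof would be complete.
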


Most of the cones that we will encounter in this document have the Fenchel-Moreau property. Examples of cones with the Fenchel-Moreau property include $\R^D_+$ and $S^D_+$. We refer to \cite{chen2022fenchelmoreau} for an in-depth study of Fenchel-Moreau cones. Note that the cones $\mcl Q(S^D_+)$, $\mcl Q(\R^D_+)$ and $\mcl Q^j(S^D_+)$, $\mcl Q^j(\R^D_+)$ defined below are all Fenchel-Moreau cones, for those cones the Hopf-Lax representation will therefore be available. Note that when $\psi$ is assumed to be convex and $H$ is possibly nonconvex, another variational representation for the viscosity solution is available \cite[Proposition~6.3]{chen2023viscosity}. Finally, we point out that, as proven in \cite[Theorem~1.2~(1)]{chen2023viscosity}, the comparison principle also remains valid for \eqref{e.hj}. Note that crucially, to use the comparison principle, the condition to be a viscosity solution needs only to be checked on the interior of $\mcl C$. This means that in practice when dealing with solutions of \eqref{e.hj}, the boundary points of $\mcl C$ do not need to be considered.
\begin{theorem}[\cite{chen2023viscosity}] \label{t.comparison}
    Let $u,v \in \mcl V(\mcl C)$, assume that $u$ is a viscosity subsolution and $v$ is a viscosity supersolution. Then,
    \begin{equation*}
        \sup_{(t,x) \in \R_+ \times \mcl C} \left\{u(t,x)-v(t,x) \right\} = \sup_{x \in \mcl C} \left\{ u(0,x) - v(0,x) \right\}.
    \end{equation*}
\end{theorem}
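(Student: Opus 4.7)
The plan is to prove the comparison principle by the doubling-of-variables technique from the theory of viscosity solutions, suitably adapted to the cone setting. Assume for contradiction that $S := \sup_{(t,x) \in \R_+ \times \mcl C}\{u(t,x)-v(t,x)\} > S_0 := \sup_{x \in \mcl C}\{u(0,x)-v(0,x)\}$ and fix $\eta > 0$ such that $S \geq S_0 + 3\eta$. Since the definition of $\mcl V(\mcl C)$ gives uniform spatial Lipschitz bounds and the linear time estimate $|u(t,x)-u(0,x)| \leq Ct$, replacing $u$ by $\tilde u(t,x) = u(t,x) - \eta t - \alpha \sqrt{1+|x|^2_{\mcl H}}$ for $\alpha > 0$ small enough preserves a gap $\tilde u(t_0,x_0) - v(t_0,x_0) \geq S_0 + \eta$ at some $(t_0,x_0)$ with $t_0 > 0$. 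The penalization ensures that the doubled auxiliary functional introduced below attains its supremum on a compact set, and the perturbation only shifts the Hamiltonian by a term controlled by $\alpha$ thanks to the local Lipschitz continuity of $H$ on $\mcl C$.

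For $\epsilon > 0$, define
\begin{equation*}
    \Phi_\epsilon(t,s,x,y) = \tilde u(t,x) - v(s,y) - \frac{(t-s)^2}{2\epsilon} - \frac{|x-y|^2_{\mcl H}}{2\epsilon}, \qquad (t,s,x,y) \in \R_+^2 \times \mcl C^2.
\end{equation*}
The spatial penalization in $\tilde u$ combined with the uniform Lipschitz bound on $v$ forces $\Phi_\epsilon$ to attain its supremum at some point $(t_\epsilon,s_\epsilon,x_\epsilon,y_\epsilon)$. Standard estimates then yield $|t_\epsilon - s_\epsilon|^2/\epsilon + |x_\epsilon-y_\epsilon|^2_{\mcl H}/\epsilon \to 0$ as $\epsilon \to 0$, while the $-\eta t$ term together with the definition of $\mcl V(\mcl C)$ keeps $t_\epsilon, s_\epsilon$ bounded away from $0$ for $\epsilon$ small.

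The main obstacle, specific to the cone setting, is that $x_\epsilon$ or $y_\epsilon$ may lie on $\partial \mcl C$, where the viscosity conditions in Definition~\ref{d.visco} are not required to hold. To handle this, fix $e \in \mathring{\mcl C} \cap \mcl C^*$, which is nonempty for the cones of interest here, and add a further penalization $-\beta (\langle e, x\rangle_{\mcl H} + \langle e, y\rangle_{\mcl H})$ for small $\beta > 0$. The $\mcl C^*$-monotonicity of $\tilde u$ and $v$ ensures that shifting $(x_\epsilon,y_\epsilon)$ by a small multiple of $e$ into $\mathring{\mcl C}^2$ cannot decrease $\tilde u - v$, while the $\beta$-penalty only changes controllably; a compactness argument then produces, for each $\beta$, a maximizer $(t_\beta,s_\beta,x_\beta,y_\beta)$ with $x_\beta, y_\beta \in \mathring{\mcl C}$ and with limits as $\beta \to 0$ that recover the properties of $(t_\epsilon,s_\epsilon,x_\epsilon,y_\epsilon)$. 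Applying the subsolution property at $(t_\beta,x_\beta)$ and the supersolution property at $(s_\beta,y_\beta)$, with test functions obtained by freezing the other pair of variables in the penalized $\Phi_\epsilon$, yields
\begin{equation*}
    \tfrac{t_\beta-s_\beta}{\epsilon} + \eta - H(p^u) \leq 0 \quad \text{and} \quad \tfrac{t_\beta-s_\beta}{\epsilon} - H(p^v) \geq 0,
\end{equation*}
where $p^u = (x_\beta-y_\beta)/\epsilon + \beta e + O(\alpha)$ and $p^v = (x_\beta-y_\beta)/\epsilon - \beta e$. Both $p^u$ and $p^v$ lie in a bounded set controlled by the Lipschitz constants of $\tilde u$ and $v$, and $|p^u-p^v|_{\mcl H} = O(\alpha + \beta)$. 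Subtracting and using the local Lipschitz continuity of $H$ on this bounded set gives $\eta \leq O(\alpha + \beta)$; sending first $\beta \to 0$ and then $\alpha \to 0$ produces the desired contradiction, completing the proof.
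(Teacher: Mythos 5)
The paper does not prove this theorem; it is quoted from \cite[Theorem~1.2]{chen2023viscosity}, so there is no in-paper argument to compare against. Your doubling-of-variables strategy is the natural one, and the preliminary steps (the $-\eta t$ and $-\alpha\sqrt{1+|x|^2_{\mcl H}}$ perturbations, existence of maximizers of $\Phi_\epsilon$, the estimate $|t_\epsilon-s_\epsilon|^2/\epsilon+|x_\epsilon-y_\epsilon|^2_{\mcl H}/\epsilon\to 0$, and keeping $t_\epsilon, s_\epsilon$ away from $0$) are standard and essentially fine, apart from the minor point that one should first restrict to a finite time horizon $[0,T]$ and send $T\to\infty$ at the end, since $u-v$ may grow linearly in $t$ for $u,v\in\mcl V(\mcl C)$.

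The genuine gap is in the step that moves the maximizer off $\partial\mcl C$, which is precisely the nontrivial content of the theorem: the viscosity inequalities are only assumed on $\mathring{\mcl C}$ and no boundary condition is imposed. You assert that the $\mcl C^*$-monotonicity of $\tilde u$ and $v$ ensures that shifting $(x_\epsilon,y_\epsilon)$ by a small multiple of $e$ into $\mathring{\mcl C}^2$ cannot decrease $\tilde u-v$. This is false: $u$ and $v$ are \emph{both} $\mcl C^*$-nondecreasing, so the shift increases $\tilde u(t,x)$ but also increases $v(s,y)$ and hence decreases $-v(s,y)$, leaving the sign of the change in $\tilde u(t,x)-v(s,y)$ indeterminate. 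The linear penalization $-\beta(\langle e,x\rangle_{\mcl H}+\langle e,y\rangle_{\mcl H})$ also has the wrong sign: since $e\in\mcl C^*$, the map $x\mapsto\langle e,x\rangle_{\mcl H}$ is nonnegative on $\mcl C$ and is minimized at $0\in\partial\mcl C$ (and along faces of $\mcl C$), so this penalty pushes the maximizer \emph{toward} the boundary rather than away from it. Thus the construction does not in fact yield a maximizer in $\mathring{\mcl C}^2$, and the subsequent use of the sub- and supersolution inequalities at $(t_\beta,x_\beta)$ and $(s_\beta,y_\beta)$ is unjustified. Repairing this requires a different mechanism: one must show that a $\mcl C^*$-nondecreasing sub- or supersolution on $\mathring{\mcl C}$ effectively satisfies the relevant viscosity inequality at a boundary maximizer as well, by correcting the test gradient by an element of the normal cone to $\mcl C$ and absorbing that correction through the $\mcl C^*$-monotonicity of $H$; a linear penalty alone does not achieve this.
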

We conclude this section by using the comparison principle to prove a stability result for approximate strong solutions of \eqref{e.hj} on $\mathring{\mcl C}$.
\begin{proposition} \label{p. approximate strong solutions are approximate visco solutions}
    Let $\psi : \mcl C \to \R$ be a Lipschitz and $\mathcal{C}^*$-nondecreasing function, and let $H : \mathcal{H} \to \R$ be such that $H \big|_{\mcl C}$ is $\mcl C^*$-nondecreasing and locally Lipschitz.  Let $g \in \mcl V(\mcl C)$, assume that $g(0,\cdot) = \psi$, that $g$ is differentiable on $(0,+\infty) \times \mathring{\mcl C}$ and that there exists a constant $c >0$ such that
    \begin{equation}
        \sup_{t > 0, x \in \mathring{\mcl C}} |\partial_t g(t,x) - H(\nabla g(t,x))| \leq c.
    \end{equation}
    Then, for every $t \geq 0$, $\sup_{x \in \mcl C} |v(t,x)-g(t,x)| \leq ct$. Where $v$ is the unique viscosity solution of \eqref{_.hj}.
\end{proposition}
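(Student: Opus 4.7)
The plan is to sandwich the viscosity solution $v$ between two time-shifted versions of $g$ via the comparison principle (Theorem~\ref{t.comparison}). Specifically, I define
\begin{equation*}
    g^+(t,x) = g(t,x) + ct, \qquad g^-(t,x) = g(t,x) - ct,
\end{equation*}
and argue that $g^+$ is a viscosity supersolution of \eqref{_.hj} while $g^-$ is a viscosity subsolution, both belonging to $\mcl V(\mcl C)$.

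First I would check the membership in $\mcl V(\mcl C)$. Since $g \in \mcl V(\mcl C)$, each $g^\pm(t,\cdot)$ has the same Lipschitz constant as $g(t,\cdot)$, the monotonicity in $\mcl C^*$ is unaffected by the additive function $\pm ct$ of time only, and the bound on $|g^\pm(0,x)-g^\pm(t,x)|/t$ is simply shifted by $c$. Thus $g^\pm \in \mcl V(\mcl C)$, and continuity (hence upper/lower semicontinuity) follows from Lipschitz regularity in $x$ together with the bounded time-increment condition, combined with differentiability on the interior.

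Next I would promote the approximate strong solution property to a classical sub/super solution inequality for $g^\pm$. At any point $(t,x) \in (0,+\infty) \times \mathring{\mcl C}$, since $g$ is differentiable there,
\begin{equation*}
    \partial_t g^+(t,x) - H(\nabla g^+(t,x)) = \bigl(\partial_t g(t,x) - H(\nabla g(t,x))\bigr) + c \geq -c + c = 0,
\end{equation*}
and symmetrically $\partial_t g^-(t,x) - H(\nabla g^-(t,x)) \leq 0$. Because $g^\pm$ are classically differentiable on $(0,+\infty) \times \mathring{\mcl C}$, a standard argument converts these into viscosity inequalities: if $\phi$ is smooth and $g^+ - \phi$ has a local minimum at an interior point $(t,x)$, then $\partial_t \phi(t,x) = \partial_t g^+(t,x)$ and $\nabla \phi(t,x) = \nabla g^+(t,x)$, so the supersolution inequality follows (and symmetrically for $g^-$).

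Finally, I would apply Theorem~\ref{t.comparison} twice. Since $v$ is a viscosity solution (hence a subsolution) and $g^+$ is a viscosity supersolution with $v(0,\cdot) = g^+(0,\cdot) = \psi$,
\begin{equation*}
    \sup_{(t,x) \in \R_+ \times \mcl C} (v(t,x) - g^+(t,x)) = \sup_{x \in \mcl C}(\psi(x) - \psi(x)) = 0,
\end{equation*}
yielding $v(t,x) \leq g(t,x) + ct$. The symmetric application with $v$ as supersolution and $g^-$ as subsolution gives $g(t,x) - ct \leq v(t,x)$, and combining the two inequalities produces the claim. The only step requiring any care is the verification that $g^\pm$ genuinely satisfy the regularity hypotheses needed to invoke the comparison principle — everything else is a direct computation using the differentiability of $g$ on the interior, where the viscosity condition is actually tested.
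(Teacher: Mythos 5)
Your proposal is correct and follows essentially the same route as the paper: shift $g$ by $\pm ct$, check membership in $\mcl V(\mcl C)$, use differentiability of $g$ on $(0,+\infty) \times \mathring{\mcl C}$ to verify that $g^+$ is a supersolution and $g^-$ a subsolution (noting that the viscosity test points lie exactly in the region where $g$ is classically differentiable), and apply Theorem~\ref{t.comparison} twice to sandwich $v$. The paper phrases this more briefly but the logic is identical.
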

\begin{proof}
   Define $\tilde{g}(t,x) = g(t,x) +ct$, we have $\Tilde{g} \in \mcl V(\mcl C)$ and $\Tilde{g}(0,\cdot) = \psi$. In addition, $\Tilde{g}$ is differentiable on $(0,+\infty) \times \mathring{\mcl C}$ and 
   \begin{equation*}
       \partial_t \Tilde{g}  - H(\nabla \Tilde{g}) \geq 0 \text{ on } (0,+\infty) \times \mathring{\mcl C}.
   \end{equation*}
   Therefore $\Tilde{g}$ is supersolution of \eqref{_.hj} on $ \mathring{\mcl C}$. According to the comparison principle \cite[Theorem~1.2~(1)]{chen2023viscosity}, we have $v-\Tilde{g} \leq 0$ on $\R_+ \times \mcl C$. So for every $x \in \mcl C$,
   \begin{equation*}
       v(t,\cdot)-g(t,\cdot) \leq ct.
   \end{equation*}
    Similarly, we show that $(t,x) \mapsto g(t,x)-ct$ is a subsolution and use the comparison principle to deduce $g(t,\cdot)-v(t,\cdot) \leq ct$ on $\R_+ \times \mcl C$. Combining those two bounds, we obtain the desired result. 
\end{proof}

\section{Permutation-invariant objects} \label{s.generalities}

In this section, we introduce basic notations and results for matrix valued paths. Those results will be useful in later sections for the analysis of Hamilton-Jacobi equations.

\subsection{Permutation-invariant matrices} 

Let $m \in \R^{D \times D}$, for every $s \in \msc S_D$ we define $m^s = (m_{s(d),s(d')})_{1 \leq d,d'\leq D}$. When for every $s \in \msc S_D$, $m^s = m$ we say that $m$ is permutation-invariant. An in depth study of permutation-invariant matrices is conducted in \cite{oneill2021doubleconstant}, for the sake of completeness we extract the following result. Recall that $\text{id}_D$ denotes the $D \times D$ identity matrix and $\mathbf{1}_D$ denotes the $D \times D$ matrix whose coefficients are all equal to $1$.

\begin{proposition}[\cite{oneill2021doubleconstant}] \label{p.permutation invariant matrix reduction}
    Let $m \in \R^{D \times D}$ be permutation-invariant. We have
    \begin{equation}
        m = \begin{pmatrix}
             a & t & \cdots & \cdots & t \\ 
             t & \ddots & \ddots & & \vdots \\
             \vdots & \ddots &\ddots & \ddots & \vdots \\
            \vdots & & \ddots & \ddots & t \\
             t & \cdots & \cdots & t & a\\
            \end{pmatrix} = a \text{id}_D + t \left(\mathbf{1}_D - \text{id}_D \right),
    \end{equation}
    where $a = m_{11}$ and $t = m_{12}$. The matrix $m$ admits two eigenvalues $\lambda_1 = a-t$ and $\lambda_2 = a-t +Dt$ (or $1$ eigenvalue when $t = 0$) and can be expressed in terms of its eigenvalues via
    \begin{equation}
        \begin{split}
            m &= \frac{1}{D}\begin{pmatrix}
             \lambda_2 + (D-1)\lambda_1 & \lambda_2-\lambda_1 & \cdots & \cdots & \lambda_2-\lambda_1 \\ 
             \lambda_2-\lambda_1 & \ddots & \ddots & & \vdots \\
             \vdots & \ddots &\ddots & \ddots & \vdots \\
            \vdots & & \ddots & \ddots & \lambda_2-\lambda_1 \\
             \lambda_2-\lambda_1 & \cdots & \cdots &\lambda_2-\lambda_1  & \lambda_2 + (D-1)\lambda_1\\
            \end{pmatrix} \\
            &= \lambda_1 \left(\text{id}_D - \frac{\mathbf{1}_D}{D} \right) + \lambda_2 \frac{\mathbf{1}_D}{D}.
        \end{split}
    \end{equation}
\end{proposition}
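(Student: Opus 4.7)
The proposition is a structural statement: a $D \times D$ matrix that is invariant under simultaneous row and column permutations is determined by two parameters (a diagonal and an off-diagonal value), has at most two distinct eigenvalues, and admits a simple spectral decomposition in the basis of permutation-invariant projectors $\tfrac{\mathbf{1}_D}{D}$ and $\text{id}_D - \tfrac{\mathbf{1}_D}{D}$. The plan is to verify these three parts in order, using only elementary linear algebra.

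First, I would extract the two parameters from permutation invariance. Fix $d, d' \in \{1,\ldots,D\}$ with $d = d'$: for any other index $e$, there exists a permutation $s \in \msc S_D$ with $s(d) = e$, so $m_{dd} = m_{s(d)s(d)} = m_{ee}$. Hence all diagonal entries share a common value $a := m_{11}$. For $d \neq d'$, given any other pair $e \neq e'$, the transitivity of $\msc S_D$ on ordered pairs of distinct elements produces an $s$ with $s(d) = e$, $s(d') = e'$, and then $m_{dd'} = m_{ee'}$. All off-diagonal entries thus share a common value $t := m_{12}$ (assuming $D \geq 2$; for $D = 1$ the statement is trivial). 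This gives $m = a\,\text{id}_D + t(\mathbf{1}_D - \text{id}_D)$.

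Next, I would compute the spectrum. Rewrite the matrix as
\begin{equation*}
    m = (a-t)\,\text{id}_D + t\,\mathbf{1}_D.
\end{equation*}
Since $\mathbf{1}_D$ has rank one, with eigenvalue $D$ on the line spanned by $e := (1,\ldots,1)^\tsp$ and eigenvalue $0$ on its $(D-1)$-dimensional orthogonal complement, it follows directly that $m$ has eigenvalue $\lambda_2 = (a-t) + tD = a - t + Dt$ (with eigenvector $e$) and eigenvalue $\lambda_1 = a-t$ with multiplicity $D-1$ on $e^\perp$. When $t = 0$ these coincide and there is a single eigenvalue $a$.

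Finally, I would verify the spectral representation. Let $P := \tfrac{\mathbf{1}_D}{D}$; this is the orthogonal projector onto $\R e$, so $\text{id}_D - P$ is the projector onto $e^\perp$, and by the preceding spectral analysis
\begin{equation*}
    m = \lambda_1 (\text{id}_D - P) + \lambda_2 P.
\end{equation*}
Expanding this and using $\lambda_2 - \lambda_1 = Dt$ returns $m = (a-t)\text{id}_D + t \mathbf{1}_D$, matching the form above; the entrywise matrix in the statement is just this identity written out. There is no genuine obstacle here—the only point that requires a moment's care is to note that $e$ and $e^\perp$ are exactly the two isotypic subspaces for the $\msc S_D$-action on $\R^D$ by coordinate permutation, which is what forces the spectrum to consist of at most two values.
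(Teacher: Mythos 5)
Your proof is correct and takes the same overall strategy as the paper's: extract the two-parameter form $m = a\,\text{id}_D + t(\mathbf{1}_D - \text{id}_D)$ from permutation-invariance, then read off the spectrum from the structure of $\mathbf{1}_D$. The one substantive difference is in how the eigenvalues are obtained. The paper notes $\chi_{\mathbf{1}_D}(X) = (X-D)X^{D-1}$ and scales the characteristic polynomial, using $\chi_m(X) = t^D\chi_{\mathbf{1}_D}\bigl(\frac{X-a+t}{t}\bigr)$ for $t \neq 0$, whereas you diagonalize $\mathbf{1}_D$ directly through its rank-one structure (eigenvalue $D$ on $\R e$, eigenvalue $0$ on $e^\perp$ for $e = (1,\dots,1)^\tsp$). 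Your route is marginally more informative: it hands you the eigenspaces explicitly, so the final spectral identity $m = \lambda_1\bigl(\text{id}_D - \tfrac{\mathbf{1}_D}{D}\bigr) + \lambda_2\tfrac{\mathbf{1}_D}{D}$ drops out as the statement that $\tfrac{\mathbf{1}_D}{D}$ and $\text{id}_D - \tfrac{\mathbf{1}_D}{D}$ are the orthogonal projectors onto the two eigenspaces, a verification the paper's proof leaves implicit. Both arguments are elementary and reduce to the same fact about $\mathbf{1}_D$; either is fine.
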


\begin{proof}
    Let $m \in \R^{D \times D}$ be permutation-invariant, and let  $a = m_{11}$ and $t = m_{12}$. Let $d,d' \in \{1,\dots,D\}$, if $d \neq d'$ there exists $s \in \msc S_D$, such that $(s(d),s(d')) = (1,2)$, since $m$ is permutation-invariant, this yields $m_{d,d'} = t$. Otherwise, $d = d'$ and there exists $s \in \msc S_D$, such that $(s(d),s(d')) = (1,1)$, from which it follows that $m_{d,d} = a$. This proves that $m = a I_D + t \left(\mathbf{1}_D - \text{id}_D \right)$. Moreover, the characteristic polynomial of the matrix $\mathbf{1}_D$ is $\chi_{\mathbf{1}_D}(X) = (X-D)X^{D-1}$. The eigenvalues of $m$ can be deduced from the fact that if $t \neq 0$, $\chi_m(X) = t^D\chi_{\mathbf{1}_D} \left( \frac{X-a+t}{t}\right)$. 
\end{proof}

In what follows, for every $\lambda_1,\lambda_2 \in \R$, we will denote by $m(\lambda_1,\lambda_2)$ the matrix $\lambda_1 \left(\text{id}_D - \frac{\mathbf{1}_D}{D} \right) + \lambda_2 \frac{\mathbf{1}_D}{D}$. According to Proposition~\ref{p.permutation invariant matrix reduction}, $m(\lambda_1,\lambda_2)$ is the only $D \times D$ permutation-invariant matrix whose eigenvalues are $\lambda_1$ and $\lambda_2$. Recall that $\R^{D \times D}$ is equipped with the Frobenius inner product
\begin{equation*}
    m \cdot n = \sum_{d,d'= 1}^D m_{dd'}n_{dd'}.
\end{equation*}
In particular, we have $m(\lambda) \cdot m(\mu)  = (D-1) \lambda_1 \mu_1 + \lambda_2 \mu_2$. As per \eqref{e.convex conjugate}, we let $\xi^*$ denote the convex conjugate of $\xi$ with respect to the cone $S^D_+$, that is 
\begin{equation*}
    \xi^*(m) = \sup_{n \in S^D_+} \left\{m \cdot n - \xi(n) \right\}.
\end{equation*}
For every $\lambda_1,\lambda_2 \in \R$, we define
\begin{equation} \label{e. def xi_perp}
    \xi_\perp(\lambda_1,\lambda_2) = \xi\left(m\left(\frac{\lambda_1}{D-1},\lambda_2\right)\right).
\end{equation}
We equip $\R^D$ with the standard inner product $x \cdot y = \sum_{d = 1}^D x_d y_d$, and let $\xi^*_\perp$ denote the convex conjugate of $\xi_\perp$ with respect to $\R_+^2$,
\begin{equation*}
    \xi_\perp^*(\lambda) = \sup_{\mu \in \R^2_+} \left\{ \lambda \cdot \mu - \xi_\perp(\mu) \right\}.
\end{equation*}

\begin{proposition} \label{p.convex dual xi_perp}
    For every $\lambda_1,\lambda_2 \in \R$, we have 
    \begin{equation*}
         \xi_\perp^*(\lambda_1,\lambda_2) =\xi^*\left(m\left(\lambda_1,\lambda_2\right)\right) .
    \end{equation*}
\end{proposition}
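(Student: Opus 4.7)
The plan is to reduce the supremum defining $\xi^*(m(\lambda_1,\lambda_2))$, which ranges over all of $S^D_+$, to a supremum over permutation-invariant elements of $S^D_+$, and then to parametrize these via Proposition~\ref{p.permutation invariant matrix reduction} and match the resulting expression with $\xi^*_\perp(\lambda_1,\lambda_2)$.

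First I would carry out a symmetrization step. Given $B \in S^D_+$, set
\begin{equation*}
    \bar B = \frac{1}{|\msc S_D|}\sum_{s \in \msc S_D} B^s,
\end{equation*}
which lies in $S^D_+$ (since each $B^s$ does, and $S^D_+$ is convex) and is permutation-invariant by construction. The key observations are: (i) $m(\lambda_1,\lambda_2)$ is itself permutation-invariant, so for every $s$ the identity $m(\lambda_1,\lambda_2) \cdot B^s = (m(\lambda_1,\lambda_2))^{s^{-1}} \cdot B = m(\lambda_1,\lambda_2) \cdot B$ holds, and averaging gives $m(\lambda_1,\lambda_2) \cdot \bar B = m(\lambda_1,\lambda_2) \cdot B$; and (ii) by the permutation invariance \eqref{e. xi permutation invariance} of $\xi$ and Jensen's inequality applied to the convex function $\xi\big|_{S^D_+}$,
\begin{equation*}
    \xi(\bar B) \le \frac{1}{|\msc S_D|}\sum_{s} \xi(B^s) = \xi(B).
\end{equation*}
Combining these two facts yields $m(\lambda_1,\lambda_2) \cdot \bar B - \xi(\bar B) \ge m(\lambda_1,\lambda_2) \cdot B - \xi(B)$, so the supremum defining $\xi^*(m(\lambda_1,\lambda_2))$ can be restricted to permutation-invariant $B \in S^D_+$.

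Next I would apply Proposition~\ref{p.permutation invariant matrix reduction}: any permutation-invariant $B \in \R^{D\times D}$ has the form $B = m(\mu_1,\mu_2)$ where $\mu_1,\mu_2$ are its eigenvalues, and such $B$ lies in $S^D_+$ if and only if $\mu_1,\mu_2 \ge 0$. Using the inner product identity $m(\lambda)\cdot m(\mu) = (D-1)\lambda_1\mu_1 + \lambda_2\mu_2$ recorded just above the statement, we obtain
\begin{equation*}
    \xi^*(m(\lambda_1,\lambda_2)) = \sup_{\mu_1,\mu_2 \ge 0}\Bigl\{ (D-1)\lambda_1 \mu_1 + \lambda_2 \mu_2 - \xi(m(\mu_1,\mu_2)) \Bigr\}.
\end{equation*}

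Finally, performing the change of variable $\nu_1 = (D-1)\mu_1$ (using $D \ge 2$) and recalling the definition \eqref{e. def xi_perp} of $\xi_\perp$,
\begin{equation*}
    \xi^*(m(\lambda_1,\lambda_2)) = \sup_{(\nu_1,\mu_2) \in \R^2_+}\Bigl\{ \lambda_1 \nu_1 + \lambda_2 \mu_2 - \xi_\perp(\nu_1,\mu_2)\Bigr\} = \xi^*_\perp(\lambda_1,\lambda_2),
\end{equation*}
which is the claimed identity. There is no genuine obstacle here: the only point that requires mild care is verifying that $B \mapsto B^s$ preserves $S^D_+$ (it is conjugation by a permutation matrix) and that the linear change of variables does not shrink the domain of optimization, both of which are immediate.
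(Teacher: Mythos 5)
Your proof is correct and follows essentially the same route as the paper: symmetrize over $\msc S_D$, use permutation invariance of $m(\lambda)$ and Jensen for $\xi$, then parametrize permutation-invariant positive semidefinite matrices via Proposition~\ref{p.permutation invariant matrix reduction} and rescale to match the definition of $\xi_\perp$. The only cosmetic difference is that you reduce the supremum in one stroke, whereas the paper records the easy inequality $\xi^*(m(\lambda))\ge\xi^*_\perp(\lambda)$ and then proves the reverse; the substance is the same.
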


\begin{proof}
    Clearly, $\xi^*(m(\lambda_1,\lambda_2)) \geq (\xi_\perp)^*(\lambda_1,\lambda_2)$. Conversely, given $n \in S^D_+$, we define $n_0 = \frac{1}{D!} \sum_{s \in \msc S_D} n^s$. The matrix $n_0$ is permutation-invariant and positive semi-definite, by Proposition~\ref{p.permutation invariant matrix reduction}, there exists $\mu_1,\mu_2 \in \R_+^2$ such that $n_0 = m\left(\mu_1/(D-1),\mu_2\right)$. Furthermore, since $\xi$ is convex and permutation-invariant, we have 
    \begin{equation*}
        \xi(n_0) \leq \frac{1}{D!} \sum_{s \in \msc S_D} \xi(n^s) = \xi(n).
    \end{equation*}
    Therefore, 
    \begin{align*}
        m\left(\lambda_1,\lambda_2\right) \cdot n - \xi(n) &=  m\left(\lambda_1,\lambda_2\right) \cdot n_0 - \xi(n) \\
                                                           &= \lambda \cdot \mu - \xi(n) \\
                                                           &\leq \lambda \cdot \mu - \xi\left(m\left(\frac{\mu_1}{D-1},\mu_2\right)\right) \\
                                                           &\leq (\xi_\perp)^*(\lambda_1,\lambda_2).
    \end{align*}
    Taking the supremum over $n \in S^D_+$, we obtain the desired inequality.
\end{proof}

\subsection{Permutation-invariant paths}

Let $V = S^D$ or $V = \R^D$, we equip $V$ with any norm $|\cdot|$. For $p \in [1,+\infty)$, we denote by $L^p([0,1),V)$ or simply $L^p$ the set of functions $h :  [0,1) \to V$ such that $|h|^p$ is integrable. As usual, functions in $L^p$ are to be understood as equivalence classes of functions modulo equality almost everywhere. We define the $p$-norm on $L^p$, by 
\begin{equation*}
    |h|_{L^p} = \left(\int_0^1 |h(u)|^p \d u\right)^{1/p}.
\end{equation*}
The set $L^\infty$ is the set of essentially bounded functions, and we equip it with the sup norm 
\begin{equation*}
    |h|_{L^\infty} = \text{ess-sup}_{u \in [0,1)} |h(u)|.
\end{equation*}
When $V$ is equipped with a scalar product $x \cdot y$ and $|\cdot|$ is the associated Euclidean norm, the norm $|\cdot|_{L^2}$ is Hilbertian and comes from the scalar product
\begin{equation*}
    \langle h,k \rangle_{L^2} = \int_0^1 h(u)\cdot k(u) \d u.
\end{equation*}
At fixed $p \in [1,+\infty]$, the different norms $|\cdot|_{L^p}$ obtained by changing the norm $|\cdot|$ on $V$ are all equivalent. Therefore, the statement $h \in L^p$ does not depend on the particular norm $|\cdot|$ we choose, but the precise value of $|h|_{L^p}$ does. Given $q \in \mcl Q(\R^2_+)$, we let $q^\perp \in \mcl Q(S^D_+)$ be the path defined by $q^\perp(u) = m(q(u))$. For $\mathfrak q \in \mcl Q(S^D_+)$ and $s \in \msc S_D$, we define $\mathfrak q^s \in \mcl Q(S^D_+)$ by $\mathfrak q^s(u) = (\mathfrak q(u))^s = (\mfk q_{s(d),s(d')}(u))_{1 \leq d,d'\leq D}$. As previously, we equip $S^D$ with the Frobenius inner product and $\R^2$ with the standard inner product.

\begin{proposition} \label{p.permutation invariant paths}
    Let $\mfk q \in \mcl Q(S^D_+)$, assume that for every $s \in \msc S_D$, we have $\mfk q^s = \mfk q$, then there exists $q =(q_1,q_2) \in \mcl Q(\R^2_+)$ such that $\mfk q = q^\perp$. In addition, for every $p \in [1,+\infty]$, $\mfk q  \in L^p$ if and only if $q \in L^p$. Moreover, when $\mfk q \in L^2$, we have for every $q' = (q'_1,q'_2) \in \mcl Q(\R^2_+) \cap L^2$,
    \begin{equation*}
        \langle \mfk q, (q')^\perp \rangle_{L^2} =  \langle q_1, (D-1) q'_1 \rangle_{L^2} + \langle q_2, q'_2 \rangle_{L^2}. 
    \end{equation*}
\end{proposition}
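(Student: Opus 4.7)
The plan is to apply Proposition~\ref{p.permutation invariant matrix reduction} pointwise in $u \in [0,1)$ and then verify that the resulting eigenvalue-parameters assemble into a càdlàg nondecreasing $\R^2_+$-valued path. Fix $u \in [0,1)$. Since $\mfk q^s = \mfk q$ for every $s \in \msc S_D$, the matrix $\mfk q(u)$ is permutation-invariant, and it is positive semi-definite by assumption. By Proposition~\ref{p.permutation invariant matrix reduction}, its two eigenvalues $\lambda_1(u), \lambda_2(u)$ are nonnegative and $\mfk q(u) = m(\lambda_1(u), \lambda_2(u))$. Define $q(u) = (q_1(u), q_2(u)) := (\lambda_1(u), \lambda_2(u))$, so by construction $\mfk q = q^\perp$.

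Next I would verify that $q \in \mcl Q(\R^2_+)$. The explicit formulas from Proposition~\ref{p.permutation invariant matrix reduction}, namely $q_1(u) = \mfk q_{11}(u) - \mfk q_{12}(u)$ and $q_2(u) = \mfk q_{11}(u) + (D-1)\mfk q_{12}(u)$, express $q_1$ and $q_2$ as linear combinations of entries of $\mfk q$, hence both are càdlàg. For monotonicity, fix $u \le v$. Then $\mfk q(v) - \mfk q(u)$ belongs to $S^D_+$ and remains permutation-invariant, so another application of Proposition~\ref{p.permutation invariant matrix reduction} yields $\mfk q(v) - \mfk q(u) = m(\mu_1, \mu_2)$ with $\mu_1, \mu_2 \geq 0$. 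By uniqueness of the $(\lambda_1, \lambda_2)$ decomposition and linearity of $m$, this gives $q(v) - q(u) = (\mu_1, \mu_2) \in \R^2_+$, proving that $q$ is nondecreasing.

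For the $L^p$ statement, observe that the linear map $m : \R^2 \to S^D$ defined in Section~\ref{s.generalities} is injective (its image is a $2$-dimensional subspace parametrised by the two eigenvalues). On the finite-dimensional source and target, any two norms are equivalent, so there exist constants $0 < c \le C < \infty$ (depending only on $D$ and the chosen norms) such that $c |(a,b)| \le |m(a,b)| \le C|(a,b)|$ for all $(a,b) \in \R^2$. Applying this pointwise to $(q_1(u), q_2(u))$ and integrating (or taking essential supremum) shows $\mfk q \in L^p$ if and only if $q \in L^p$.

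Finally, the inner product identity is immediate from the pointwise formula stated right after Proposition~\ref{p.permutation invariant matrix reduction}, namely $m(\lambda) \cdot m(\mu) = (D-1)\lambda_1 \mu_1 + \lambda_2 \mu_2$. Applying this with $\lambda = q(u)$ and $\mu = q'(u)$ and integrating over $[0,1)$ gives
\begin{equation*}
    \langle \mfk q, (q')^\perp \rangle_{L^2} = \int_0^1 \big( (D-1) q_1(u) q'_1(u) + q_2(u) q'_2(u) \big) \, \d u = \langle q_1, (D-1) q'_1 \rangle_{L^2} + \langle q_2, q'_2 \rangle_{L^2},
\end{equation*}
which is the claimed formula. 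There is no real obstacle here; the only point requiring a moment of care is that Loewner-monotonicity of $\mfk q$ translates to coordinatewise monotonicity of $q$, which is handled by the permutation-invariance of differences combined with Proposition~\ref{p.permutation invariant matrix reduction}.
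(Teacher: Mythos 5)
Your proof is correct and follows essentially the same route as the paper: apply Proposition~\ref{p.permutation invariant matrix reduction} pointwise to build $q$, use permutation-invariance of the differences $\mfk q(v)-\mfk q(u)$ to get monotonicity, and finish the inner-product identity from $m(\lambda)\cdot m(\mu)=(D-1)\lambda_1\mu_1+\lambda_2\mu_2$. The only cosmetic difference is in the $L^p$ step, where the paper computes the specific identity $N_2(\mfk q(u))=|q(u)|_\infty$ and you instead invoke the abstract fact that $|m(\cdot)|$ is a norm on $\R^2$ equivalent to the standard one; both are fine and rest on the same finite-dimensional norm-equivalence principle.
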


\begin{proof}
    Let $u \in [0,1)$, for every $s \in \msc S_D$, the matrix $\mfk q(u) \in S^D_+$ satisfies $(\mfk q(u))^s = \mfk q(u)$. According to Proposition~\ref{p.permutation invariant matrix reduction}, there exists $q(u) \in \R^2_+$ such that $\mfk q(u)  = m(q(u))$. If $u \leq v$, we have
    \begin{equation*}
        m(q(v)-q(u)) = \mfk q(v) - \mfk q(u) \in S^D_+,
    \end{equation*}
    so $q_1(v) -q_1(u) \geq 0$ and $q_2(v) -q_2(u) \geq 0$. 
    
    In addition, the map $m(\lambda_1,\lambda_2) \mapsto (\lambda_1,\lambda_2)$ is continuous according to Proposition~\ref{p.permutation invariant matrix reduction}. This justifies that $q \in \mcl Q(\R^2_+)$ and by definition $q^\perp = \mfk q$. Let $|x|_p = \left(\frac{1}{D} \sum_{d=1}^D |x_d|^p \right)^{1/p}$ denote the normalized $p$-norm on $\R^D$ and $|x|_{\infty} = \sup_{1 \leq d \leq D} |x_d|$. We equip $S^D$ with the norm
    \begin{equation*}
        N_2(m) = \sup_{|x|_2 = 1} |mx|_2.
    \end{equation*}
    When $m \in S^D_+$, $N_2(m)$ is equal to the biggest eigenvalue of $m$. Therefore, for every $\lambda_1,\lambda_2 \in \R_+$,
    \begin{equation*}
        N_2(m(\lambda_1,\lambda_2)) = \max\{\lambda_1,\lambda_2\}.
    \end{equation*}
    Thus, for every $u \in [0,1)$, $N_2(\mfk q(u)) = |q(u)|_\infty$, in particular $\mfk q \in L^p$ if and only if $q \in L^p$. 

    Finally, assume that $\mfk q \in L^2$, then by the previous argument $q \in L^2$. Recall that we have $m(\lambda) \cdot m(\mu) = (D-1)\lambda_1 \mu_1 + \lambda_2 \mu_2$, for every $q' \in \R^2_+$, we have 
    \begin{align*}
        \langle \mfk q,(q')^\perp \rangle_{L^2} &= \int_0^1 m(q(u))\cdot m(q'(u)) \d u \\
                                                &= \int_0^1 (D-1)q_1(u)q'_1(u) + q_2(u)q'_2(u) \d u \\
                                                &=  \langle q_1, (D-1) q'_1 \rangle_{L^2} + \langle q_2, q'_2  \rangle_{L^2}.
    \end{align*}
\end{proof}

\subsection{Piecewise linear approximations of paths} \label{ss. linear approximation}

Given a finite dimensional vector space $V$ and a cone $\mathcal{C} \subset V$. For every $j \geq 1$, we define 
\begin{equation} \label{e.approx cone}
    \mcl Q^j(\mathcal{C}) = \{ x \in \mathcal{C}^j \big| \; \forall i \in \{1,\dots,j\}, \; x_i - x_{i-1} \in \mathcal{C} \}.
\end{equation}
As previously, we only use $(V,\mcl C) = (S^D, S^D_+)$ or $(V,\mcl C) = (\R^D, \R^D_+)$. We will always adopt the convention $x_0 = 0$. Given $x \in \mcl Q^j(\R^D_+)$, we define $\Lambda^j x \in \mcl Q(\R^D_+)$ to be the path that linearly interpolates between the values $(0,0),(\frac{1}{j},x_1),(\frac{2}{j},x_2),\dots,(1,x_j)$. More precisely, for every $u \in [0,1)$,
\begin{equation}
    \Lambda^j x(u) = \sum_{i =1}^j \mathbf{1}_{\left[ \frac{i-1}{j},\frac{i}{j} \right)}(u) \left( x_{i-1}+j \left(u - \frac{i-1}{j} \right)(x_i -x_{i-1} )\right).
\end{equation}
We will write $\mathbf{1}_i$ instead of $\mathbf{1}_{\left[ \frac{i-1}{j},\frac{i}{j} \right)}$ in what follows, with the understanding that $\mathbf{1}_{j+1} = 0$. We also define 
\begin{equation*}
    \tilde{\mathbf{1}}_i(u) = j \left(u - \frac{i-1}{j} \right) \mathbf{1}_i(u) + j \left(\frac{i+1}{j} - u \right) \mathbf{1}_{i+1}(u).
\end{equation*}
Note that we have $\Lambda^j x= \sum_{i = 1}^j x_i \tilde{\mathbf{1}}_i$. 

We equip $\R^2$ with the normalized $\ell^1$-norm, $|v| = \frac{1}{2}(|v_1|+|v_2|)$ and the standard normalized scalar product $v \cdot w= \frac{1}{2}(v_1w_1 +v_2w_2)$. For every $p \in [1,+\infty)$, we equip $(\R^2)^j$ with the norm $|\cdot|_p$, defined by
    \begin{equation}
            |x|_p = \left( \frac{1}{j}\sum_{i = 1}^j |x_i|^p \right)^{\frac{1}{p}}.
    \end{equation}
    We also equip $(\R^2)^j$ with the normalized scalar product,
    \begin{equation*}
        \langle x,y \rangle_j = \frac{1}{j} \sum_{i=1}^j x_i \cdot y_i.
    \end{equation*}
    Given a path, $p \in \mcl Q(\R_+) \cap L^1$ we set 
    \begin{equation*}
        \Lambda_j p(u) = (  \langle p(u), j \tilde{\mathbf{1}}_i \rangle_{L^2})_{1 \leq i \leq j}, 
    \end{equation*}
    this defines a path $\Lambda_j p \in \mcl Q^j(\R_+)$. For every $q \in \mcl Q(\R_+^D)$, if we write $q(u) = (q_d(u))_{1 \leq d \leq D}$, then $q_d \in \mcl Q(\R_+)$ and we define 
    \begin{equation*}
        \Lambda_j q (u) =(\Lambda_j q_1(u),\dots,\Lambda_j q_D(u)).
    \end{equation*}
    The linear maps $\Lambda^j : \mcl Q^j(\R^2_+) \to \mcl Q(\R^2_+) \cap L^1$ and $\Lambda_j :  \mcl Q(\R^2_+) \cap L^1 \to  \mcl Q^j(\R^2_+)$ form an adjoint pair in the following sense, for every $p \in \mcl Q(\R^2_+) \cap L^1$ and $x \in \mcl Q^j(\R^2_+)$, we have 
    \begin{equation*}
        \langle \Lambda^j x , p \rangle_{L^2} =  \langle  x , \Lambda_j p \rangle_j.
    \end{equation*}

\begin{proposition} \label{p. path approximation}
   There exists a constant $c > 0$ such that for every $j \geq 1$ and every $q = (q_1,q_2) \in \mcl Q(\R^2_+) \cap L^\infty$, we have 
    \begin{equation*}
        |q - \Lambda^j \Lambda_j q|_{L^1} \leq \frac{c|q|_{L^\infty}}{j}.
    \end{equation*}
\end{proposition}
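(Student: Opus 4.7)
The plan is to reduce to the scalar case by linearity, and then use monotonicity of paths in $\mcl Q(\R_+)$ together with a telescoping argument. Since $\Lambda^j$ and $\Lambda_j$ are linear and act coordinate-wise on $\mcl Q(\R^2_+)$, and the normalized $\ell^1$-norm on $\R^2$ splits as $\frac{1}{2}(|v_1|+|v_2|)$, it suffices to establish, for a constant $c_0 > 0$ independent of $j$ and any scalar nondecreasing $p \in \mcl Q(\R_+) \cap L^\infty$, the bound
\begin{equation*}
    |p - \Lambda^j\Lambda_j p|_{L^1} \leq \frac{c_0 |p|_{L^\infty}}{j}.
\end{equation*}
Then take $c = c_0$.

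The key observation is that $(\Lambda_j p)_i = \int_0^1 p(u) \, j\tilde{\mathbf{1}}_i(u) \, \d u$ is a convex combination of the values of $p$ on the interval $\left[\frac{i-1}{j}, \frac{i+1}{j}\right)$, because $j \tilde{\mathbf{1}}_i$ is a nonnegative function of integral $1$ supported on that interval. Since $p$ is nondecreasing, it follows that
\begin{equation*}
    (\Lambda_j p)_i \in \left[ p\left(\tfrac{i-1}{j}\right), \; p\left(\tfrac{(i+1)}{j}^-\right) \right],
\end{equation*}
with the convention $p(1^-) = |p|_{L^\infty}$. Then $\Lambda^j \Lambda_j p(u)$, being a convex combination of $(\Lambda_j p)_{i-1}$ and $(\Lambda_j p)_i$ on $\left[\frac{i-1}{j}, \frac{i}{j}\right)$, takes values in $\left[ p\left(\frac{(i-2) \vee 0}{j}\right), p\left(\frac{(i+1) \wedge j}{j}^-\right) \right]$. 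Combined with the fact that $p(u)$ itself lies in $\left[ p\left(\frac{i-1}{j}\right), p\left(\frac{i}{j}^-\right) \right]$ on the same subinterval, monotonicity yields the pointwise bound
\begin{equation*}
    \left| p(u) - \Lambda^j \Lambda_j p(u)\right| \leq p\left(\tfrac{(i+1) \wedge j}{j}^-\right) - p\left(\tfrac{(i-2) \vee 0}{j}\right).
\end{equation*}

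Integrating over $\left[\frac{i-1}{j}, \frac{i}{j}\right)$ and summing over $i \in \{1,\dots,j\}$, the right-hand side becomes $\frac{1}{j} \sum_{i=1}^j \left[ p(\tfrac{(i+1) \wedge j}{j}^-) - p(\tfrac{(i-2)\vee 0}{j}) \right]$, which rearranges as three telescoping sums and is thus bounded by $\frac{3}{j}\left( p(1^-) - p(0) \right) \leq \frac{3}{j} |p|_{L^\infty}$. This gives the scalar estimate with $c_0 = 3$, and the vector estimate follows at once with $c = 3$.

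The only point requiring care is the handling of the boundary indices $i = 1$ and $i = j$, where the support of $j\tilde{\mathbf{1}}_i$ is truncated at $0$ or $1$; this is why the convention $x_0 = 0$ (and the corresponding interpretation of $p(0)$ and $p(1^-)$) must be tracked consistently. This is just bookkeeping and does not affect the constant.
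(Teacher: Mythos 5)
Your route is essentially the paper's — reduce to the scalar case by linearity, use monotonicity of $p$ to turn the approximation error into a telescoping sum of increments, and bound the telescope by $|p|_{L^\infty}$. The organization differs: the paper routes through the sampled vector $\overline{p} = (p(i/j))_{1\le i\le j}$ and splits the error via the triangle inequality $|p - \Lambda^j\Lambda_j p|_{L^1} \le |p - \Lambda^j\overline{p}|_{L^1} + |\Lambda^j\overline{p} - \Lambda^j\Lambda_j p|_{L^1}$, while you aim directly for a pointwise oscillation bound over a window of width $3/j$. Both are legitimate packagings of the same elementary idea.

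The boundary indices, however, cause more than the bookkeeping you anticipate, and the claimed constant $c_0 = 3$ is not correct as stated. Your premise that $j\tilde{\mathbf{1}}_i$ is a probability density on $[\frac{i-1}{j},\frac{i+1}{j})$ fails for $i = j$: with the convention $\mathbf{1}_{j+1} = 0$, the function $\tilde{\mathbf{1}}_j(u) = j(u-\frac{j-1}{j})\mathbf{1}_j(u)$ is a half-tent of $L^1$-norm $\frac{1}{2j}$, so $j\tilde{\mathbf{1}}_j$ integrates to $\frac{1}{2}$, not $1$. Hence $(\Lambda_j p)_j$ is \emph{not} a convex combination of values of $p$ and the inclusion $(\Lambda_j p)_j \in [p(\frac{j-1}{j}), p(1^-)]$ is false: for $p \equiv 1$ one gets $(\Lambda_j p)_j = \frac12$. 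Similarly at $i = 1$, the left knot of the interpolant is $(\Lambda_j p)_0 = 0$ by the convention $x_0 = 0$, so $\Lambda^j\Lambda_j p$ vanishes at $u = 0$ regardless of $p(0)$, and your claimed lower bracket $p(\frac{(i-2)\vee 0}{j}) = p(0)$ does not hold on $[0,\frac{1}{j})$. Both failures are confined to the two boundary subintervals, on which the crude estimate $|p - \Lambda^j\Lambda_j p| \le |p|_{L^\infty}$ yields an extra contribution of at most $\frac{2|p|_{L^\infty}}{j}$; so the proposition still holds, but with a larger constant (roughly $5$ rather than $3$), contrary to your assertion that the endpoints "do not affect the constant." (Incidentally, the same $i = j$ defect also affects the paper's own two-sided bound $\frac{p_i+p_{i-1}}{2} \le \langle p, j\tilde{\mathbf{1}}_i\rangle_{L^2} \le \frac{p_i+p_{i+1}}{2}$, which likewise fails at $i = j$ for $p \equiv 1$; the fix there too is to treat the last subinterval separately.)
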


\begin{proof}
     Let $p \in \mcl Q(\R_+) \cap L^\infty$, to lighten notations, we let $p_i = p \left( \frac{i}{j}\right)$ and define $\overline{p} = (p_i)_{1 \leq i \leq j} \in \mcl Q^j(\R_+)$. For every $i \in \{1,\dots,j\}$, we have 
    \begin{equation*}
             \frac{p_i+p_{i-1}}{2} \leq  \langle p, j \tilde{\mathbf{1}}_i \rangle_{L^2} \leq \frac{p_i+p_{i+1}}{2}.
    \end{equation*} 
        Therefore $|j p \cdot \Tilde{\mathbf{1}_i} - p_i| \leq \max \left\{ \frac{p_{i+1}-p_{i}}{2} ,\frac{p_i-p_{i-1}}{2}\right\} \leq \frac{p_{i+1}-p_{i-1}}{2}$. Summing over $i$, we obtain $ |\Lambda_j p-\overline{p}|_1 \leq \frac{|p|_{L^\infty}}{j}$. In addition, given $x \in \mcl (\R_+)^j$, we have
    \begin{equation*}
        |\Lambda^j(x)|_{L^1} \leq \sum_{i=1}^j |x_i| |\tilde{\mathbf{1}_i}|_{L^1}  = |x|_1.
    \end{equation*}
    Therefore,
    \begin{equation*}
        |\Lambda^j \Lambda_j p - \Lambda^j(\overline{p})|_{L^1} \leq |\Lambda_j p-\overline{p}|_1 \leq \frac{|p|_{L^\infty}}{j}.
    \end{equation*}
    Observe, that for $u \in \left[\frac{i-1}{j},\frac{i}{j} \right)$, $|\Lambda^j\overline{p}(u)-p(u)| \leq p_i -p_{i-1}$, thus 
    \begin{equation*}
        |\Lambda^j(\overline{p})-p|_{L^1} \leq \frac{|p|_{L^\infty}}{j}.
    \end{equation*}
    Combining the previous two displays, we obtain 
    \begin{equation*}
        |\Lambda^j \Lambda_j p - p|_1 \leq  |\Lambda^j \Lambda_j p - \Lambda^j(\overline{p})|_1 + |\Lambda^j(\overline{p})-p|_1 \leq \frac{|p|_{L^\infty}}{j} + \frac{|p|_{L^\infty}}{j} = \frac{2|p|_{L^\infty}}{j}.
    \end{equation*}
    Finally, given $q \in \mcl Q(\R_+^2) \cap L^\infty$ if we write $q(u) = (q_1(u),q_2(u))$, then $q_1,q_2 \in \mcl Q(\R_+) \cap L^\infty$. Applying the bound in the previous display to $q_1$ and $q_2$, we obtain
    \begin{align*}
        |\Lambda^j\Lambda_j q - q|_{L^1} &= \frac{1}{2} \left(|\Lambda^j\Lambda_j q_1 - q_1|_{L^1} + |\Lambda^j\Lambda_j q_2 - q_2|_{L^1} \right) \\
                                        &\leq \frac{|q_1|_{L^\infty}+|q_2|_{L^\infty}}{j} \\
                                        &\leq \frac{2|q|_{L^\infty}}{j}.
    \end{align*}
\end{proof}

Recall that given a closed convex cone $\mcl C$ we have defined its dual cone $\mcl C^*$ in \eqref{e. def dual cone}.

\begin{proposition} \label{p.properties of Lambda}
    For every $x \in (\R^2_+)^j$, we have
    \begin{equation*}
        |\Lambda^j(x)|_{L^1} \leq |x|_1.
    \end{equation*}
    In addition, if $x \in (\mcl Q^j(\R^2_+))^*$, then $(\Lambda^j(x))^\perp \in (\mcl Q(S^D_+) \cap L^2)^*$.
\end{proposition}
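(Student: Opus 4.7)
For the first inequality my plan is a direct bound. Using that $x\in(\R^2_+)^j$ and that the hat functions $\tilde{\mathbf{1}}_i$ are nonnegative, both coordinates of $\Lambda^j(x)(u)=\sum_{i=1}^j x_i\tilde{\mathbf{1}}_i(u)$ are nonnegative, so I obtain
\begin{equation*}
    |\Lambda^j(x)|_{L^1}\leq \sum_{i=1}^j|x_i|\,|\tilde{\mathbf{1}}_i|_{L^1}.
\end{equation*}
A routine integration gives $|\tilde{\mathbf{1}}_i|_{L^1}\leq 1/j$ for every $i$ (with equality for $i<j$, while for $i=j$ the right half of the hat is missing because of the convention $\mathbf{1}_{j+1}=0$); substituting this into the definition of $|x|_1$ will yield the stated bound.

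For the second statement I fix $x\in(\mcl Q^j(\R^2_+))^*$ and an arbitrary test path $\mfk q\in\mcl Q(S^D_+)\cap L^2$, and aim to prove that $\langle(\Lambda^j(x))^\perp,\mfk q\rangle_{L^2}\geq 0$. The first step will be to reduce to a permutation-invariant test path by averaging: setting $\mfk q_0(u)=\frac{1}{D!}\sum_{s\in\msc S_D}\mfk q(u)^s$ still yields a path in $\mcl Q(S^D_+)\cap L^2$, and since $(\Lambda^j(x))^\perp$ is permutation-invariant pointwise, the Frobenius inner product satisfies $\langle (\Lambda^j(x))^\perp,\mfk q^s\rangle_{L^2}=\langle (\Lambda^j(x))^\perp,\mfk q\rangle_{L^2}$ for every $s\in\msc S_D$; averaging will then give $\langle(\Lambda^j(x))^\perp,\mfk q\rangle_{L^2}=\langle(\Lambda^j(x))^\perp,\mfk q_0\rangle_{L^2}$. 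By Proposition~\ref{p.permutation invariant paths}, $\mfk q_0=q^\perp$ for some $q\in\mcl Q(\R^2_+)\cap L^2$.

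The next step will be to convert the Frobenius pairing into an $\R^2$-valued $L^2$ pairing. Using $m(\lambda)\cdot m(\mu)=(D-1)\lambda_1\mu_1+\lambda_2\mu_2$ together with the normalized scalar product $v\cdot w=\frac{1}{2}(v_1w_1+v_2w_2)$ on $\R^2$, the rescaled path $\tilde q(u)=(2(D-1)q_1(u),2q_2(u))$, which still lies in $\mcl Q(\R^2_+)\cap L^2$, will satisfy
\begin{equation*}
    \langle(\Lambda^j(x))^\perp,q^\perp\rangle_{L^2}=\langle \Lambda^j(x),\tilde q\rangle_{L^2}.
\end{equation*}
The adjoint identity $\langle \Lambda^j(x),\tilde q\rangle_{L^2}=\langle x,\Lambda_j\tilde q\rangle_j$ recalled at the end of Subsection~\ref{ss. linear approximation} is a bilinear identity valid for any $x\in(\R^2)^j$, not only for $x\in\mcl Q^j(\R^2_+)$, so it applies here. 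Since $\Lambda_j\tilde q\in\mcl Q^j(\R^2_+)$, the assumption $x\in(\mcl Q^j(\R^2_+))^*$ then forces $\langle x,\Lambda_j\tilde q\rangle_j\geq 0$, which concludes the argument.

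The main bookkeeping obstacle will be keeping track of the three distinct inner products at play — the Frobenius product on $S^D$, the normalized product on $\R^2$, and the weighted pairing $(D-1)\lambda_1\mu_1+\lambda_2\mu_2$ coming from $m(\lambda)\cdot m(\mu)$ — and identifying the correct rescaling $\tilde q=(2(D-1)q_1,2q_2)$ that makes them compatible. Once this rescaling is pinned down the remainder is a mechanical combination of $\msc S_D$-symmetrization and the adjoint identity.
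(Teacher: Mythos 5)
Your proof is correct and follows essentially the same route as the paper: $\msc S_D$-symmetrization, identification of the averaged test path with some $q^\perp$ via Proposition~\ref{p.permutation invariant paths}, the pairing identity $m(\lambda)\cdot m(\mu)=(D-1)\lambda_1\mu_1+\lambda_2\mu_2$, and the $\Lambda^j/\Lambda_j$ adjoint relation. Your rescaling $\tilde q=(2(D-1)q_1,2q_2)$ is a cosmetic repackaging of the paper's componentwise computation, and your remark that $|\tilde{\mathbf{1}}_j|_{L^1}=\tfrac{1}{2j}$ (so that $|\tilde{\mathbf{1}}_i|_{L^1}\leq 1/j$ with equality only for $i<j$) in fact corrects a small slip in the paper's write-up of the first inequality without affecting the conclusion.
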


\begin{proof}

    Let $x =(x_1,x_2) \in (\R^2_+)^j$, we have $|\tilde{ \mathbf{1}_i}|_{L^1} = \frac{1}{j}$. So, 
    \begin{equation*}
        |\Lambda^j x_1|_{L^1}= \left| \sum_{i = 1}^j x_{1i} \tilde{ \mathbf{1}_i} \right|_{L^1} \leq \sum_{i = 1}^j x_{1i} \left|  \tilde{ \mathbf{1}_i} \right|_{L^1} = \frac{1}{j} \sum_{i =1}^j |x_{1i}|.
    \end{equation*}
    It follows that,
    \begin{equation*}
        |\Lambda^j x|_{L^1} =  \frac{|\Lambda^j x_1|_{L^1} + |\Lambda^j x_2|_{L^1}}{2} \leq \frac{|x_1|_1 + |x_2|_1}{2} = |x|_1. 
    \end{equation*}
    This proves the first part of the proposition, let us now further assume that $x \in (\mcl Q^j(\R^2_+))^*$. Let $\mfk q \in \mcl Q(S^D_+) \cap L^2$,  the path $\frac{1}{D!} \sum_{s \in \msc S_D} \mfk q^s$ is permutation-invariant. According to Proposition~\ref{p.permutation invariant paths}, there exists $q \in \mcl Q(\R^2_+) \cap L^2$ such that $\frac{1}{D!} \sum_{s \in \msc S_D} \mfk q^s = q^\perp$. We have
    \begin{align*}
        \langle \mfk q, (\Lambda^jx)^\perp \rangle_{L^2} &= \left\langle \mfk q,\frac{1}{D!} \sum_{s \in \msc S_D} (\Lambda^jx)^{\perp,s^{-1}}\right\rangle_{L^2} \\
                                                           &= \left\langle \frac{1}{D!} \sum_{s \in \msc S_D} \mfk q^s,(\Lambda^jx)^\perp \right\rangle_{L^2} \\
                                                           &= \left\langle q^\perp, (\Lambda^jx)^\perp \right\rangle_{L^2} \\
                                                           &= \left\langle (D-1)q_1, \Lambda^jx_1 \right\rangle_{L^2} + \left\langle q_2, \Lambda^jx_2 \right\rangle_{L^2} \\
                                                           &= \left\langle (D-1)\Lambda_j q_1, x_1 \right\rangle_j + \left\langle \Lambda_jq_2, x_2 \right\rangle_{j}.
    \end{align*}
    We have $((D-1)\Lambda_j q_1,\Lambda_j q_2) \in \mcl Q^j(\R^2_+)$, so by definition of $(\mcl Q^j(\R^2_+))^*$, the last line in the previous display is $\geq 0$. This justifies $(\Lambda^j(x))^\perp \in (\mcl Q(S^D_+) \cap L^2)^*$.
\end{proof}

Recall the definition of $\xi_\perp$ from \eqref{e. def xi_perp}. Also recall that, given $r \in \mcl Q$ we use the notation $\int h(r)$ as a shorthand for $\int_0^1 h(r(u)) \d u$.

\begin{proposition} \label{p. apprxoimation decrease H}
    Let $q \in \mcl Q(\R^2_+) \cap L^1$, for every $j \geq 1$, we have 
    \begin{equation*}
        \int \xi_\perp(\Lambda^j \Lambda_j q) \leq \int \xi_\perp(q).
    \end{equation*}
\end{proposition}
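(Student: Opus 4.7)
The plan is to apply Jensen's inequality, exploiting that $\xi_\perp$ is convex on $\R^2_+$. This convexity is inherited from that of $\xi$ on $S^D_+$: the affine map $(\mu_1,\mu_2)\mapsto m(\mu_1/(D-1),\mu_2)$ sends $\R^2_+$ into $S^D_+$ (its eigenvalues are $\mu_1/(D-1)$ and $\mu_2$), so $\xi_\perp$ is the composition of $\xi$ with an affine map into the region where $\xi$ is convex.

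The first step is to rewrite $\Lambda^j\Lambda_j$ as an integral operator. Combining $(\Lambda_j q)_i = \int_0^1 j\tilde{\mathbf{1}}_i(v) q(v)\, \mathrm{d}v$ with $\Lambda^j x = \sum_i x_i \tilde{\mathbf{1}}_i$, one obtains
\[
\Lambda^j\Lambda_j q(u) = \int_0^1 K(u,v) q(v)\, \mathrm{d}v, \qquad K(u,v) := j\sum_{i=1}^j \tilde{\mathbf{1}}_i(u)\tilde{\mathbf{1}}_i(v)\ge 0.
\]
The kernel $K$ is symmetric, and because $\int_0^1 \tilde{\mathbf{1}}_i = 1/j$ for $i \le j-1$ while $\int_0^1 \tilde{\mathbf{1}}_j = 1/(2j)$ (the rightmost tent is half-sized since $\mathbf{1}_{j+1} = 0$), a direct computation yields
\[
\alpha(u) := \int_0^1 K(u,v)\, \mathrm{d}v = \sum_{i=1}^{j-1} \tilde{\mathbf{1}}_i(u) + \tfrac{1}{2}\tilde{\mathbf{1}}_j(u) \in [0,1],
\]
with the same formula for $v\mapsto \int_0^1 K(u,v)\,\mathrm{d}u$ by symmetry. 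In particular $\Lambda^j\Lambda_j$ is a strictly sub-Markov integral operator, losing exactly $\int_0^1(1-\alpha) = 3/(4j)$ of total mass.

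Next I would write $\Lambda^j\Lambda_j q(u) = \alpha(u) \tilde q(u) + (1-\alpha(u)) \cdot 0$ with $\tilde q(u) := \alpha(u)^{-1}\int_0^1 K(u,v) q(v)\, \mathrm{d}v$ a genuine probability average of $q$. Two uses of convexity of $\xi_\perp$---once for the convex combination with $0$, once as Jensen's inequality for $\tilde q(u)$---yield
\[
\xi_\perp\bigl(\Lambda^j\Lambda_j q(u)\bigr) \le \int_0^1 K(u,v)\,\xi_\perp(q(v))\, \mathrm{d}v + (1-\alpha(u))\,\xi_\perp(0).
\]
Integrating in $u$ and using the symmetry of $K$ to identify $\int_0^1 K(u,\cdot)\,\mathrm{d}u = \alpha(\cdot)$, the inequality rearranges to
\[
\int \xi_\perp(\Lambda^j\Lambda_j q) \le \int \xi_\perp(q) - \int_0^1 (1-\alpha(v))\bigl[\xi_\perp(q(v)) - \xi_\perp(0)\bigr]\, \mathrm{d}v,
\]
the two $\xi_\perp(0)$ pieces arising in $u$ and in $v$ cancelling exactly because they carry the same total weight $\int_0^1(1-\alpha) = 3/(4j)$.

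The hard part is to conclude that the leftover boundary integral is nonpositive, i.e.\ that $\xi_\perp(q(v)) \ge \xi_\perp(0)$ for every $v$. Since $q(v)\in\R^2_+$, this reduces to the statement that $0$ minimizes $\xi$ on $S^D_+$, equivalently that $\xi$ is $S^D_+$-nondecreasing on $S^D_+$. This monotonicity is implicit in the paper's standing setup: for the Hamiltonian $p \mapsto \int \xi(p(u))\,\mathrm{d}u$ to be $(\mcl Q(S^D_+))^*$-nondecreasing as required by Theorem~\ref{t. hj cone well posed} for well-posedness of \eqref{e. HJ smooth}, testing against constant paths forces $\xi$ to be $S^D_+$-nondecreasing on $S^D_+$. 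Under this, $\xi_\perp(q(v)) = \xi(m(q_1(v)/(D-1), q_2(v))) \ge \xi(0) = \xi_\perp(0)$, which closes the argument. The delicate bookkeeping is precisely the matching of the sub-Markov deficit $3/(4j)$ between its manifestations at the two endpoints.
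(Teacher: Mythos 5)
Your proof is correct, and it reorganizes the argument in a genuinely different way from the paper's. The paper argues interval by interval: on $[\tfrac{i-1}{j},\tfrac{i}{j})$, the path $\Lambda^j\Lambda_j q$ is the linear interpolation between $\rho_{i-1}$ and $\rho_i=(\Lambda_j q)_i$, so convexity of $\xi_\perp$ bounds $\int\xi_\perp(\Lambda^j\Lambda_j q)$ by $\sum_i\frac{\xi_\perp(\rho_{i-1})+\xi_\perp(\rho_i)}{2j}$, after which Jensen applied to each $\rho_i=\langle q,j\tilde{\mathbf{1}}_i\rangle_{L^2}$ gives the claim. You instead recognize $\Lambda^j\Lambda_j$ as a symmetric sub-Markov integral operator and apply Jensen once against its kernel, padding the mass deficit with the value at $0$. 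These are two packagings of the same convexity estimate, but your bookkeeping is more careful where the paper's is imprecise: the paper's last step invokes $\sum_{i=1}^j\tilde{\mathbf{1}}_i=1$, which actually fails on $[0,1/j)$ (the sum equals $ju$ there), and moreover $j\tilde{\mathbf{1}}_j$ carries total mass only $1/2$, so the Jensen step at $i=j$ is not plain Jensen either. The paper's proof therefore also silently relies on $\xi_\perp(0)=0$ and $\xi_\perp\geq 0$ on $\R^2_+$ --- exactly the facts you isolate as \emph{the hard part}. A small remark on your justification of those facts: appealing to the monotonicity hypothesis of Theorem~\ref{t. hj cone well posed} is valid but indirect; the more immediate source is the expansion $\sum_{p\geq 1}C^{(p)}\cdot A^{\otimes p}$ with $C^{(p)}\in S^{D^p}_+$ used in Proposition~\ref{p.inequality xi}, which directly yields $\xi(0)=0$, $\xi\geq 0$ on $S^D_+$, and the $S^D_+$-monotonicity of $\xi$ on $S^D_+$ all at once.
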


\begin{proof}
    We let $\rho_1,\dots,\rho_j \in \R^2_+$ denote the coordinates of $\Lambda_j q$. By convexity of $\xi$, we have
    \begin{align*}
        \int \xi_\perp(\Lambda^j \Lambda_j q) &= \int_0^1 \xi_\perp(\Lambda^j \Lambda_j q(u)) \d u \\
                                              &= \sum_{i = 1}^j \int_{\left[\frac{i-1}{j},\frac{i}{j} \right)}\xi_\perp\left( \rho_{i-1} + j \left(u - \frac{i-1}{j} \right)(\rho_{i} - \rho_{i-1})\right) \d u\\
                                              &= \sum_{i = 1}^j \int_{\left[\frac{i-1}{j},\frac{i}{j} \right)}\xi_\perp\left( j\left(\frac{i}{j}-u\right)\rho_{i-1} + j \left(u - \frac{i-1}{j} \right)\rho_{i} \right) \d u\\
                                              &\leq \sum_{i = 1}^j \int_{\left[\frac{i-1}{j},\frac{i}{j} \right)} j\left(\frac{i}{j}-u\right)\xi_\perp\left(\rho_{i-1}\right) + j \left(u - \frac{i-1}{j} \right)\xi_\perp\left(\rho_{i} \right) \d u \\
                                              &= \sum_{i = 1}^j \frac{\xi_\perp\left(\rho_{i-1}\right) + \xi_\perp\left(\rho_{i} \right)}{2j}.
    \end{align*}   
    Using Jensen's inequality, it follows that $\xi_\perp\left(\rho_{i} \right) \leq \langle \xi_{\perp}(q), j \Tilde{\mathbf{1}_i}\rangle_{L^2}$. Finally, since $\sum_{i = 1}^j \tilde{\mathbf{1}}_i = 1$, we obtain 
    \begin{equation*}
        \int \xi_\perp(\Lambda^j \Lambda_j q) \leq \int \xi_\perp(q).
    \end{equation*}
\end{proof}

\section{The free energy and Hamilton-Jacobi equations} \label{s. fe and hj}

We recall that $f(t,\mfk q) = \lim_{N \to +\infty} \bar F_N(t,\mfk q)$. In this section, we use the fact that $f$ solves \eqref{e. HJ smooth} and the permutation invariance of the model to show that $\lim \bar F_N(t,0)$ can be expressed as the value at $(t,0)$ of the solution of a reduced Hamilton-Jacobi equation.

\subsection{The equation for models with matrix valued paths} \label{ss.matrix paths}

Given $q \in \mcl Q(\R^2_+)$, recall that we have defined $q^\perp(u) = m(q(u))$. The goal of this section is to show that the functions defined by $f^\perp(t,q) =f(t,q^\perp)$ is the viscosity solution of some Hamilton-Jacobi on $\mcl Q(\R_+^2)$. Since we know that $f$ is Gateaux differentiable and solves $\eqref{e. HJ smooth}$, this will basically only amount to computing the Gateaux derivatives of $f^\perp$.

\begin{proposition} \label{p. fe is perm invariant}
    For every $t \geq 0$, $\mfk q \in \mcl Q(S^D_+) \cap L^2$ and $s \in \msc S_D$, we have 
    \begin{equation}
        f(t,\mfk q^s) = f(t,\mfk q).
    \end{equation} 
\end{proposition}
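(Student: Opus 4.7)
The plan is to prove the stronger identity $\bar F_N(t, \mfk q) = \bar F_N(t, \mfk q^s)$ at every finite $N$, and then pass to the limit $N \to +\infty$ using the convergence $\bar F_N(t,\cdot) \to f(t,\cdot)$. The key point is that each piece of the enriched Hamiltonian in \eqref{e. enriched fe} behaves nicely under the change of variable $\sigma \mapsto \sigma^s$ defined on $\R^{D \times N}$ by $(\sigma^s)_{d,i} = \sigma_{s(d),i}$, thanks to the two permutation invariance assumptions \eqref{e. xi permutation invariance} and \eqref{e. P1 permutation invariance}.

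Concretely, I would apply this change of variable inside the double integral defining $F_N(t,\mfk q)$. The reference measure $P_N = P_1^{\otimes N}$ is invariant by \eqref{e. P1 permutation invariance}. The correction term $\xi(\sigma\sigma^*/N)$ is unchanged because $\sigma^s(\sigma^s)^* = (\sigma\sigma^*)^s$ together with \eqref{e. xi permutation invariance}. For the disorder term, the process $\sigma \mapsto H_N(\sigma^s)$ is centered Gaussian with covariance $N\xi((\sigma\tau^*/N)^s) = N\xi(\sigma\tau^*/N)$, hence equal in law to $H_N$ itself as a process indexed by $\sigma$.

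The two cascade terms require a bit of bookkeeping. Writing $W_N^{\mfk q}(\alpha)\cdot \sigma^s = \tilde W_N(\alpha) \cdot \sigma$ with $\tilde W_N(\alpha)_{d,i} := W_N^{\mfk q}(\alpha)_{s^{-1}(d),i}$, a direct computation of covariances shows that $\tilde W_N$ has the same law as $W_N^{\mfk q^{s^{-1}}}$. Similarly, using $A \cdot B^s = A^{s^{-1}}\cdot B$ for $A,B \in \R^{D\times D}$, the Onsager-type term satisfies $\mfk q(1) \cdot \sigma^s(\sigma^s)^* = \mfk q^{s^{-1}}(1) \cdot \sigma\sigma^*$. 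Collecting these identifications and taking expectations shows $\bar F_N(t,\mfk q) = \bar F_N(t,\mfk q^{s^{-1}})$; since $s \mapsto s^{-1}$ is a bijection of $\msc S_D$, this is the desired identity at the level of $\bar F_N$, and it passes to the limit $N \to +\infty$ to give $f(t,\mfk q) = f(t,\mfk q^s)$.

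There is essentially no real obstacle here: the argument is a straightforward change of variables, and the only delicate point is keeping consistent track of whether $s$ or $s^{-1}$ appears at each step, especially in the Gaussian comparison for $W_N^{\mfk q}$ and in the pairing $\mfk q(1)\cdot \sigma\sigma^*$. One minor auxiliary check, which is immediate, is that the involution $\mfk q \mapsto \mfk q^s$ preserves $\mcl Q(S^D_+) \cap L^2$ (permutation conjugation preserves $S^D_+$ and the Frobenius norm, and trivially preserves càdlàg monotone paths).
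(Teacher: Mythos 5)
Your proposal is correct and takes essentially the same route as the paper: both establish the finite-$N$ identity $\bar F_N(t,\mfk q^s) = \bar F_N(t,\mfk q)$ by combining the change of variable $\sigma \mapsto \sigma^s$ (using the permutation invariance of $P_N$) with an equality-in-law of the Gaussian part of the enriched Hamiltonian, then pass to the limit. The only stylistic difference is that the paper compares the mean and covariance of the full joint process $H_N^{t,\mfk q}(\sigma,\alpha)$ all at once, whereas you track each term of the Hamiltonian separately; this relies implicitly on the independence of $H_N$ and $W_N^{\mfk q}$, which does hold.
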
  

\begin{proof}
     Let $\mfk q \in \mcl Q(S^D_+)$ and $t \geq 0$, recall from \eqref{e. enriched fe} that we have
    \begin{equation*}
        F_N(t,\mfk q) = -\frac{1}{N} \E \log \iint \exp(H_N^{t,\mfk q}(\sigma,\alpha)) dP_N(\sigma) d \mfk R(\alpha),
    \end{equation*}
    where
    \begin{equation*}
        H_N^{t,\mfk q}(\sigma,\alpha) = \sqrt{2t} H_N(\sigma) -t N \xi \left(\frac{\sigma \sigma^* }{N}\right) + \sqrt{2} W_N^\mfk q(\alpha) \cdot \sigma - \mfk q(1) \cdot \sigma \sigma^*.
    \end{equation*}
    The process $( H_N^{t,\mfk q}(\sigma,\alpha))_{\sigma \in \R^D, \alpha \in \mfk U}$ is a Gaussian process with the following mean and covariance,
    \begin{align*}
        \E  H_N^{t,\mfk q}(\sigma,\alpha) &= -t N \xi \left(\frac{\sigma \sigma^* }{N}\right)  - \mfk q(1) \cdot \sigma \sigma^*, \\
        \text{Cov} \left(  H_N^{t,\mfk q}(\sigma,\alpha),  H_N^{t,\mfk q}(\tau,\beta) \right) &= 2N \left( t \xi \left( \frac{\sigma \tau^*}{N}\right) + \frac{1}{N} \sum_{i = 1}^N \sigma_i \cdot \mfk q(\alpha \wedge \beta) \tau_i \right).
    \end{align*}
   Let us show that for every $s \in \msc S_D$, we have 
    \begin{equation*}
        (H_N^{t,\mfk q^{s^{-1}}}(\sigma,\alpha))_{\sigma \in \R^D,\alpha \in \mfk U} \overset{(d)}{=} (H_N^{t,\mfk q}(\sigma^{s},\alpha))_{\sigma \in \R^D,\alpha \in \mfk U}.
    \end{equation*}

    To proceed, we compute the covariance and the mean of those two Gaussian processes and discover that they are equal. Let $\sigma, \tau \in \R^D$ and $\alpha,\beta \in \mfk U$, since $\xi$ is permutation-invariant, we have $\xi \left( \frac{(\sigma^s) (\tau^s)^*}{N}\right) = \xi \left( \frac{\sigma \tau^*}{N}\right)$. Therefore,
    \begin{align*}
         \E  H_N^{t,\mfk q^{s^{-1}}}(\sigma,\alpha)  H_N^{t,\mfk q^{s^{-1}}}(\tau,\beta) &= 2N \left( t \xi \left( \frac{\sigma \tau^*}{N}\right) + \frac{1}{N} \sum_{i = 1}^N \sigma_i \cdot \mfk q^{s^{-1}}(\alpha \wedge \beta) \tau_i \right) \\
                                                                       &= 2N \left( t \xi \left( \frac{\sigma \tau^*}{N}\right) + \frac{1}{N} \sum_{i = 1}^N \sigma^s_i \cdot \mfk q(\alpha \wedge \beta) \tau^s_i \right) \\
                                                                       &= 2N \left( t \xi \left( \frac{(\sigma^s) (\tau^s)^*}{N}\right) + \frac{1}{N} \sum_{i = 1}^N \sigma^s_i \cdot \mfk q(\alpha \wedge \beta) \tau^s_i \right) \\
                                                                       &=  \E  H_N^{t,\mfk q}(\sigma^s,\alpha)  H_N^{t,\mfk q}(\tau^s,\beta).
    \end{align*}
    In addition, 
    \begin{align*}
        \E H_N^{t,\mfk q^{s^{-1}}}(\sigma,\alpha) &= - Nt \xi \left( \frac{\sigma \sigma^*}{N}\right)  - \mfk q^{s^{-1}}(1) \cdot \sigma \sigma^* \\
                                                  &= - Nt \xi \left( \left( \frac{\sigma \sigma^*}{N}\right)^s\right)  - \mfk q(1) \cdot \left( \sigma \sigma^*\right)^s \\
                                                  &= - Nt \xi \left( \frac{\sigma^s (\sigma^s)^*}{N}\right)  - \mfk q(1) \cdot \sigma^s (\sigma^s)^* \\
                                                  &=  \E H_N^{t,\mfk q}(\sigma^s,\alpha).
    \end{align*}
    The desired equality in law follows. We now show that $f(t,\mfk q^s) = f(t,\mfk q)$. According to the previous result and the permutation invariance of $P_N$, we have 
   \begin{align*}
       F_N(t,\mfk q^s)  &=  -\frac{1}{N} \E \log \iint \exp(H_N^{t,\mfk q^s}(\sigma,\alpha)) dP_N(\sigma) d \mfk R(\alpha) \\
                        &=  -\frac{1}{N} \E \log \iint \exp(H_N^{t,\mfk q}(\sigma^{s^{-1}},\alpha)) dP_N(\sigma) d \mfk R(\alpha) \\
                        &=  -\frac{1}{N} \E \log \iint \exp(H_N^{t,\mfk q}(\sigma,\alpha)) dP_N(\sigma) d \mfk R(\alpha) \\
                        &= F_N(t,\mfk q).
   \end{align*}
    Letting $N \to +\infty$, we obtain the desired result.
    
\end{proof}
    
\begin{proposition} \label{p.permutation invariant fe}
    For every $(t,q) \in (0,+\infty) \times ( \mcl Q_\upa(\R_+^2) \cap L^\infty)$, $f(t,\cdot)$ is Gateaux differentiable at $q^\perp$ and the path $\nabla f(t,q^\perp) \in \mcl Q(S^D_+)$ is permutation-invariant that is, for every $u \in [0,1)$ and $s \in \msc S_D$,
    \begin{equation*}
        (\nabla f(t,q^\perp)(u))^s = \nabla f(t,q^\perp)(u).
    \end{equation*}
\end{proposition}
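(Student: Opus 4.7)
The plan is to leverage two ingredients: the Gateaux differentiability of $f(t,\cdot)$ at nice points provided by \cite[Proposition~8.6]{chenmourrat2023cavity}, and the permutation invariance of $f$ established in Proposition~\ref{p. fe is perm invariant}. The main idea is that permutation invariance of $f$ translates, via differentiation, into permutation invariance of the gradient, provided we can check that the set $\text{Adm}(q^\perp)$ of admissible directions is itself invariant under the permutation action.

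First I would verify that $q^\perp \in \mcl Q_\upa(S^D_+) \cap L^\infty$. Indeed $q^\perp(0) = 0$, the eigenvalues of $q^\perp(u)$ are $q_1(u)$ and $q_2(u)$, and boundedness together with the strict monotonicity / bounded ellipticity conditions for $q \in \mcl Q_\upa(\R^2_+) \cap L^\infty$ transfer directly to $q^\perp$ (the condition $\text{Ellipt}(q^\perp(v)-q^\perp(u)) \le 1/c$ becomes a statement about ratios of $q_1$ and $q_2$ increments, which is part of the definition of $\mcl Q_\upa(\R^2_+)$). This then allows us to invoke \cite[Propositions~7.2~\&~8.6]{chenmourrat2023cavity} to conclude that $f(t,\cdot)$ is Gateaux differentiable at $q^\perp$, giving a unique $d := \nabla f(t,q^\perp) \in L^2$ such that $f'(q^\perp;\kappa) = \langle d,\kappa\rangle_{L^2}$ for every $\kappa \in \text{Adm}(q^\perp)$.

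Next, fix $s \in \msc S_D$ and a direction $\kappa \in \text{Adm}(q^\perp)$. I claim $\kappa^s \in \text{Adm}(q^\perp)$ as well. Since the map $\mfk r \mapsto \mfk r^s$ preserves $S^D_+$, preserves the nondecreasing property, and preserves the $L^2$-norm, and since $(q^\perp)^s = q^\perp$ by construction, from $q^\perp + r\kappa \in \mcl Q(S^D_+) \cap L^2$ for $r \in [0,r_0]$ we get
\begin{equation*}
q^\perp + r\kappa^s = (q^\perp + r\kappa)^s \in \mcl Q(S^D_+) \cap L^2
\end{equation*}
for the same range of $r$. Now by Proposition~\ref{p. fe is perm invariant},
\begin{equation*}
f(t,q^\perp + r\kappa) = f\bigl(t,(q^\perp + r\kappa)^s\bigr) = f(t,q^\perp + r\kappa^s),
\end{equation*}
so dividing by $r$ and letting $r \downarrow 0$ yields $f'(q^\perp;\kappa) = f'(q^\perp;\kappa^s)$, i.e.\ $\langle d,\kappa\rangle_{L^2} = \langle d,\kappa^s\rangle_{L^2}$. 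A pointwise change of indices in the Frobenius inner product gives $\langle d,\kappa^s\rangle_{L^2} = \langle d^{s^{-1}},\kappa\rangle_{L^2}$, so
\begin{equation*}
\langle d - d^{s^{-1}},\kappa\rangle_{L^2} = 0 \quad \text{for every } \kappa \in \text{Adm}(q^\perp).
\end{equation*}

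The final step is to deduce that $d = d^{s^{-1}}$. Since both $d$ and $d^{s^{-1}}$ (which belongs to $L^2$) represent the same continuous linear functional $\kappa \mapsto f'(q^\perp;\kappa)$ on $\text{Adm}(q^\perp)$, the uniqueness clause in the definition of Gateaux differentiability forces $d^{s^{-1}} = d$. Equivalently $d^s = d$, so $\nabla f(t,q^\perp)(u)$ is permutation-invariant for almost every $u$, as claimed. The subtle step — and the one I would sanity-check most carefully — is the verification that $q^\perp \in \mcl Q_\upa(S^D_+) \cap L^\infty$ (so that Gateaux differentiability is available), and the invariance $\text{Adm}(q^\perp)^s = \text{Adm}(q^\perp)$; once these are in hand, the conclusion is essentially formal from the uniqueness of the gradient.
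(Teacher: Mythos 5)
Your proof is correct and follows essentially the same route as the paper's: invoke \cite[Proposition~8.6]{chenmourrat2023cavity} for Gateaux differentiability of $f(t,\cdot)$ at $q^\perp$, use Proposition~\ref{p. fe is perm invariant} to equate the difference quotients along $\kappa$ and $\kappa^s$, pass to the limit to get $\langle \nabla f(t,q^\perp),\kappa\rangle_{L^2} = \langle (\nabla f(t,q^\perp))^{s^{-1}},\kappa\rangle_{L^2}$, and conclude by uniqueness of the Gateaux derivative. The extra care you take in checking that $q^\perp \in \mcl Q_\upa(S^D_+) \cap L^\infty$ and that $\text{Adm}(q^\perp)$ is stable under the $\msc S_D$-action is left implicit in the paper's version but is correct and worth making explicit.
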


\begin{proof}
    According to \cite[Proposition~8.6]{chenmourrat2023cavity}, $f(t,\cdot)$ is Gateaux differentiable at $q^\perp$. Let $\kappa \in L^2([0,1),S^D)$ such that for $\varepsilon > 0$ small enough $q^\perp + \varepsilon\kappa \in \mcl Q(S^D_+)$. According to Proposition~\ref{p. fe is perm invariant}, we have for every $s \in \msc S_D$,
    \begin{equation*}
        \frac{f(t,q^\perp + \varepsilon \kappa^s) - f(t,q^\perp)}{\varepsilon} =  \frac{f(t,q^\perp + \varepsilon \kappa) - f(t,q^\perp)}{\varepsilon}.
    \end{equation*}
    Letting $\varepsilon \to 0$, we obtain that $\langle \nabla f(t,q^\perp) , \kappa \rangle_{L^2} = \langle \nabla f(t,q^\perp) , \kappa^s \rangle_{L^2}$. So,
    \begin{equation*}
        \langle \nabla f(t,q^\perp), \kappa \rangle_{L^2} = \langle (\nabla f(t,q^\perp))^{s^{-1}} , \kappa \rangle_{L^2}.
    \end{equation*}
    This means that $(\nabla f(t,q^\perp))^{s^{-1}} = \nabla f(t,q^\perp)$.    
\end{proof}

For $q \in \mcl Q(\R^2_+) \cap L^2$, we define $f^\perp(t,q) = f(t,q^\perp)$, $\psi^\perp(q) = \psi(q^\perp)$. Recall the definition of $\xi_\perp$ in \eqref{e. def xi_perp}.

\begin{proposition} \label{p.fe is a strong solution}
    The function $f^\perp$ is Gateaux differentiable at every $(t,q) \in (0,+\infty) \times (\mcl Q_\upa(\R^2_+) \cap L^\infty)$, and it satisfies
    \begin{equation} \label{e.HJ matrix}
        \begin{cases}
            \partial_t f^\perp - \int \xi_\perp(\nabla f^\perp) = 0 \text{ on } (0,+\infty) \times (\mcl Q_\upa(\R^2_+) \cap L^\infty) \\
            f^\perp(0,\cdot) = \psi^\perp.
        \end{cases}
    \end{equation}

\end{proposition}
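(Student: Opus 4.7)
The plan is a direct chain-rule computation that pulls back the Hamilton-Jacobi equation \eqref{e. HJ smooth} for $f$ along the linear injection $q \mapsto q^\perp$, using Proposition~\ref{p.permutation invariant fe} to describe $\nabla f(t, q^\perp)$. Since $f$ is already Gateaux differentiable and solves \eqref{e. HJ smooth} on $(0,+\infty) \times (\mcl Q_\upa(S^D_+) \cap L^\infty)$, essentially no new analytic input is needed; one only translates everything through the identifications of Propositions~\ref{p.permutation invariant matrix reduction} and \ref{p.permutation invariant paths}.

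First I would check the routine compatibility $q \in \mcl Q_\upa(\R^2_+) \cap L^\infty \Longrightarrow q^\perp \in \mcl Q_\upa(S^D_+) \cap L^\infty$, which follows immediately from Proposition~\ref{p.permutation invariant matrix reduction} since the eigenvalues of $m(\lambda_1,\lambda_2)$ are exactly $\lambda_1,\lambda_2$, so the ellipticity and strict monotonicity conditions carry over. Next, fix $(t,q)$ as in the statement and $\kappa \in \text{Adm}(q)$; then $(q+\varepsilon\kappa)^\perp = q^\perp + \varepsilon\kappa^\perp \in \mcl Q(S^D_+) \cap L^2$ for small $\varepsilon > 0$, so $\kappa^\perp \in \text{Adm}(q^\perp)$. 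Gateaux differentiability of $f$ at $(t,q^\perp)$ then yields
\[
\lim_{\varepsilon \downarrow 0} \frac{f^\perp(t,q+\varepsilon\kappa) - f^\perp(t,q)}{\varepsilon} = \langle \nabla f(t,q^\perp), \kappa^\perp \rangle_{L^2}.
\]
By Proposition~\ref{p.permutation invariant fe}, $\nabla f(t,q^\perp)$ is permutation-invariant, so Proposition~\ref{p.permutation invariant paths} provides a unique $p = (p_1,p_2) \in \mcl Q(\R^2_+) \cap L^2$ with $\nabla f(t,q^\perp) = p^\perp$ and
\[
\langle p^\perp, \kappa^\perp \rangle_{L^2} = \langle (D-1)p_1, \kappa_1 \rangle_{L^2} + \langle p_2, \kappa_2 \rangle_{L^2} = \langle ((D-1)p_1, p_2), \kappa \rangle_{L^2}.
\]
The uniqueness clause in the definition of the Gateaux derivative then forces $\nabla f^\perp(t,q) = ((D-1)p_1, p_2)$. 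For the equation itself, $\partial_t f^\perp(t,q) = \partial_t f(t,q^\perp)$ by definition, and the identity \eqref{e. def xi_perp} gives
\[
\int \xi_\perp(\nabla f^\perp(t,q)) = \int_0^1 \xi\bigl(m(p_1(u),p_2(u))\bigr)\, \d u = \int_0^1 \xi(p^\perp(u))\, \d u = \int \xi(\nabla f(t,q^\perp)),
\]
so the Hamilton-Jacobi equation for $f$ yields \eqref{e.HJ matrix}. The initial condition is the tautology $f^\perp(0,q) = f(0,q^\perp) = \psi(q^\perp) = \psi^\perp(q)$.

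The only subtle point is the uniqueness of $\nabla f^\perp(t,q)$ as an element of $L^2([0,1),\R^2)$, which requires $\text{Adm}(q)$ to have dense $L^2$-span when $q \in \mcl Q_\upa(\R^2_+) \cap L^\infty$. This should follow from the same strategy used in \cite{chenmourrat2023cavity} to establish the analogous density for $\text{Adm}(\mfk q)$ when $\mfk q \in \mcl Q_\upa(S^D_+) \cap L^\infty$, exploiting the $c > 0$ ellipticity buffer in the definition of $\mcl Q_\upa(\R^2_+)$ to add arbitrary bounded-variation (hence, by approximation, arbitrary $L^2$) perturbations without leaving $\mcl Q(\R^2_+) \cap L^2$ for $\varepsilon$ small enough.
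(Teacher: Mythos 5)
Your proposal is correct and follows essentially the same route as the paper's proof: a chain-rule computation pulling back the Hamilton--Jacobi equation for $f$ through the linear embedding $q \mapsto q^\perp$, with Proposition~\ref{p.permutation invariant fe} used to write $\nabla f(t,q^\perp)$ as a permutation-invariant path $r^\perp$ and Proposition~\ref{p.permutation invariant paths} used to convert the pairing $\langle r^\perp, \kappa^\perp\rangle_{L^2}$ into $\langle ((D-1)r_1,r_2), \kappa \rangle_{L^2}$, yielding $\nabla f^\perp(t,q) = ((D-1)r_1, r_2)$ and hence $\xi_\perp(\nabla f^\perp) = \xi(\nabla f(t,q^\perp))$. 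Your closing remark about the density of admissible directions is a valid caveat the paper leaves implicit, but it does not change the argument.
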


\begin{proof}
        According to \cite[Propositions~7.2~\&~8.6]{chenmourrat2023cavity}, $f$ is Gateaux differentiable at every $(t,\mfk q) \in (0,+\infty) \times (\mcl Q_\upa(S^D_+) \cap L^\infty)$ and we have
    \begin{equation*}
        \partial_t f(t,\mfk q) = \int \xi(\nabla f(t,\mfk q)).
    \end{equation*}
    Let $t > 0$ and $q = (q_1,q_2) \in \mcl Q_\upa(\R^2_+) \cap L^\infty$ and let $\kappa = (\kappa_1,\kappa_2) \in L^2([0,1),\R^2)$ such that for $\varepsilon > 0$ small enough $q +\varepsilon \kappa \in\mcl Q_\upa(\R^2_+)$. Passing to the limit as $\varepsilon \to 0$ in
    \begin{equation*}
        \frac{f^\perp(t,q+\varepsilon \kappa) -f^\perp(t,q) }{\varepsilon} =  \frac{f(t,q^\perp+\varepsilon \kappa^\perp) -f(t,q^\perp) }{\varepsilon},
    \end{equation*}
    yields $\langle \nabla f^\perp(t,q), \kappa \rangle_{L^2} = \langle \nabla f(t,q^\perp),\kappa^\perp \rangle_{L^2}$. According to Proposition~\ref{p.permutation invariant fe}, the path $\nabla f(t,q^\perp)$ is permutation-invariant. According to Proposition~\ref{p.permutation invariant paths}, there exists $r = (r_1,r_2) \in \mcl Q(\R^2_+)$ such that, $\nabla f(t,q^\perp) = r^\perp$ and we have 
    \begin{equation*}
        \langle (\nabla f^\perp(t,q))_1, \kappa_1 \rangle_{L^2} +  \langle (\nabla f^\perp(t,q))_2, \kappa_2 \rangle_{L^2} = \langle (D-1)r_1, \kappa_1 \rangle_{L^2} +  \langle r_2, \kappa_2 \rangle_{L^2}.
    \end{equation*}
    So, $\nabla f^\perp(t,q) = \left((D-1)r_1 ,r_2 \right)$. In particular, for every $u \in [0,1)$,
    \begin{equation*}
        \xi_\perp(\nabla f^\perp(t,q)) = \xi(r^\perp) = \xi(\nabla f(t,q^\perp)).
    \end{equation*}
    Finally, we have 
    \begin{align*}
        \partial_t f^\perp(t,q) = \partial_t f(t,q^\perp) = \int \xi(\nabla f(t,q^\perp)) = \int \xi_\perp(\nabla f^\perp(t,q)).
    \end{align*}
\end{proof}

If \eqref{e.HJ matrix} was written on $\mcl Q(\R^2_+) \cap L^2$ rather than $\mcl Q_\upa(\R^2_+) \cap L^\infty$, then we could immediately conclude that $f^\perp$ is a solution in the viscosity sense and \eqref{e.symmetric optimizer matrix } would directly follow from the Hopf-Lax representation. This is indeed how are going to argue, but to do so we need to show that we can neglect boundary points. That is, paths in $\mcl Q(\R^2_+) \cap L^2$ that do not belong to $\mcl Q_\upa(\R^2_+) \cap L^\infty$. This can be done using the content of Section~\ref{s.hj}. To connect the two settings, we will need to consider finite dimensional approximations of \eqref{e.HJ matrix}. Recall the definition of the lift and projection maps $\Lambda^j$ and $\Lambda_j$ from Section~\ref{ss. linear approximation}. Given $(t,x) \in \R_+ \times \mcl Q^j(\R_+^2)$, we define $f^{\perp,j}(t,x) = f^\perp(t,\Lambda^jx)$ and $H_\perp^j(x) = \int \xi_\perp(\Lambda^j x)$.

\begin{proposition} \label{p.the approximation are approximate strong solutions}
   There exists a constant $c > 0$ such that the following holds. For every $j \geq 1$, the function $f^{\perp,j}$ is differentiable on $(0,+\infty) \times \mathring{\mcl Q^j(\R^2_+)}$ and for every $(t,x) \in (0,+\infty) \times \mathring{\mcl Q^j(\R^2_+)}$, 
   \begin{equation*}
       |\partial_t f^{\perp,j}(t,x) - H_\perp^j(\nabla f^{\perp,j}(t,x))| \leq \frac{c}{j}.
   \end{equation*}
   Furthermore, $f^{\perp,j}  \in \mcl V(\mcl Q^j(\R^2_+))$.
\end{proposition}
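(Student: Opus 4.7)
\smallskip

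\noindent\textbf{Plan.} The strategy is to transport both the differentiability and the identity $\partial_t f^\perp = \int \xi_\perp(\nabla f^\perp)$ from $(t,\Lambda^j x)$ down to $(t,x)$ via the adjoint pair $(\Lambda^j,\Lambda_j)$, and then absorb the mismatch between $\xi_\perp(\nabla f^\perp(t,\Lambda^j x))$ and $\xi_\perp(\Lambda^j\Lambda_j\nabla f^\perp(t,\Lambda^j x))$ using the two approximation estimates of Section~\ref{ss. linear approximation}, namely Propositions~\ref{p. path approximation} and \ref{p. apprxoimation decrease H}. The membership in $\mcl V(\mcl Q^j(\R^2_+))$ then reduces to transporting the corresponding estimates for $f^\perp$.

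\smallskip

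\noindent\textbf{Step 1: application of Proposition~\ref{p.fe is a strong solution}.} For $x\in\mathring{\mcl Q^j(\R^2_+)}$, every increment $x_i-x_{i-1}$ has strictly positive coordinates, so $\Lambda^j x$ is piecewise linear with slope $j(x_i-x_{i-1})$ bounded below in both coordinates by a positive constant. Hence $\Lambda^j x\in \mcl Q_\upa(\R^2_+)\cap L^\infty$, and Proposition~\ref{p.fe is a strong solution} yields Gateaux differentiability of $f^\perp$ at $(t,\Lambda^j x)$ together with
\begin{equation*}
    \partial_t f^\perp(t,\Lambda^j x)=\int \xi_\perp\bigl(\nabla f^\perp(t,\Lambda^j x)\bigr).
\end{equation*}

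\smallskip

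\noindent\textbf{Step 2: differentiability of $f^{\perp,j}$ and identification of derivatives.} Since $f^{\perp,j}$ is the composition of the Lipschitz Gateaux differentiable function $f^\perp$ with the bounded linear map $(t,x)\mapsto(t,\Lambda^j x)$ whose spatial range is a fixed finite dimensional subspace of $L^2$, $f^{\perp,j}$ is differentiable in the finite dimensional sense on $(0,+\infty)\times \mathring{\mcl Q^j(\R^2_+)}$. The adjoint relation $\langle \Lambda^j x,p\rangle_{L^2}=\langle x,\Lambda_j p\rangle_j$ recorded in Section~\ref{ss. linear approximation} then gives
\begin{equation*}
    \partial_t f^{\perp,j}(t,x)=\partial_t f^\perp(t,\Lambda^j x),\qquad \nabla f^{\perp,j}(t,x)=\Lambda_j \nabla f^\perp(t,\Lambda^j x).
\end{equation*}
Writing $r=\nabla f^\perp(t,\Lambda^j x)$, one inequality in the target estimate is immediate: by Proposition~\ref{p. apprxoimation decrease H},
\begin{equation*}
    H^j_\perp(\nabla f^{\perp,j}(t,x))=\int \xi_\perp(\Lambda^j\Lambda_j r)\leq \int \xi_\perp(r)=\partial_t f^{\perp,j}(t,x).
\end{equation*}
For the reverse inequality, I would combine Proposition~\ref{p. path approximation} with the local Lipschitz continuity of $\xi_\perp$ on the ball of radius $|r|_{L^\infty}$ to obtain
\begin{equation*}
    \int \xi_\perp(r)-\int \xi_\perp(\Lambda^j\Lambda_j r)\leq L_{|r|_{L^\infty}}\cdot|r-\Lambda^j\Lambda_j r|_{L^1}\leq \frac{c_{|r|_{L^\infty}}}{j},
\end{equation*}
which provides the desired $c/j$ bound, uniformly in $(t,x)$, provided $|r|_{L^\infty}$ is uniformly bounded.

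\smallskip

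\noindent\textbf{Step 3: membership in $\mcl V(\mcl Q^j(\R^2_+))$ and main obstacle.} The uniform Lipschitz bound on $f^{\perp,j}(t,\cdot)$ and the $t$-slope bound follow from the analogous properties of $f^\perp$ (themselves a consequence of \eqref{e. lipschitz free energy}) together with $|\Lambda^j x|_{L^1}\leq|x|_1$ from Proposition~\ref{p.properties of Lambda}. The $(\mcl Q^j(\R^2_+))^*$-monotonicity of $f^{\perp,j}(t,\cdot)$ follows from the monotonicity of $f^\perp(t,\cdot)$ with respect to $(\mcl Q(S^D_+)\cap L^2)^*$ combined with the cone-preservation statement in the second part of Proposition~\ref{p.properties of Lambda}, since an increment in $(\mcl Q^j(\R^2_+))^*$ is sent by $(\Lambda^j(\cdot))^\perp$ into $(\mcl Q(S^D_+)\cap L^2)^*$. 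The main technical obstacle is the uniform $L^\infty$ bound on $r=\nabla f^\perp(t,\Lambda^j x)$ used in Step~2: without it, the local Lipschitz constant of $\xi_\perp$ could blow up with $j$ and the $c/j$ error would degrade. This bound should nevertheless be available because $P_1$ is compactly supported inside the unit ball, and $\nabla f$ admits a cavity interpretation in terms of overlaps (see \cite[Proposition~3.6]{chenmourrat2023cavity} and the surrounding discussion), which are bounded by the support of $P_1$; I expect this to be extractable with purely soft arguments and no new computation.
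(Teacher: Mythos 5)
Your approach tracks the paper's proof closely: apply Proposition~\ref{p.fe is a strong solution} to $f^\perp$ at $(t,\Lambda^j x)$, use the adjoint relation between $\Lambda^j$ and $\Lambda_j$ to read off $\nabla f^{\perp,j}(t,x) = \Lambda_j \nabla f^\perp(t,\Lambda^j x)$, and convert the mismatch between $\int \xi_\perp(\nabla f^\perp(t,\Lambda^j x))$ and $\int \xi_\perp(\Lambda^j\Lambda_j \nabla f^\perp(t,\Lambda^j x))$ into a $c/j$ error using Proposition~\ref{p. path approximation} and the Lipschitz continuity of $\xi_\perp$. Your verification of $f^{\perp,j} \in \mcl V(\mcl Q^j(\R^2_+))$ via the two parts of Proposition~\ref{p.properties of Lambda} matches the paper's Step~1 as well. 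The one-sided shortcut you extract from Proposition~\ref{p. apprxoimation decrease H} is correct but superfluous here, since Proposition~\ref{p. path approximation} already handles both signs once a uniform gradient bound is available.

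That uniform bound is exactly where the gap lies, and you have correctly flagged it but not closed it: without a $j$- and $(t,x)$-uniform $L^\infty$ bound on $r = \nabla f^\perp(t,\Lambda^j x)$, the local Lipschitz constant of $\xi_\perp$ entering your error estimate is uncontrolled and the $c/j$ rate could degrade. Your proposed route through the cavity interpretation of $\nabla f$ would in principle supply such a bound, but it is more machinery than is needed. The paper closes the gap in one line using an ingredient already sitting in your Step~3: $f^\perp$ is $1$-Lipschitz with respect to the $|\cdot|_{L^1}$ norm (inherited from \eqref{e. lipschitz free energy} and the estimate $|(\Lambda^j x)^\perp|_{L^1} \lesssim |x|_1$), hence its Gateaux gradient satisfies $|\nabla f^\perp(t,q)|_{L^\infty} \leq 1$ at every point where it exists, by $L^1$--$L^\infty$ duality. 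Substituting this into Proposition~\ref{p. path approximation} gives $|r - \Lambda^j\Lambda_j r|_{L^1} \leq c/j$ with $c$ independent of everything, and the Lipschitz constant of $\xi_\perp$ can then be taken over the fixed ball $|a|\leq 1$. With that single observation inserted, your argument is complete and coincides with the paper's.
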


\begin{proof}
   
    \noindent \emph{Step 1.} We show that $f^{\perp,j} \in \mcl V(\mcl Q^j(\R^2_+))$.

    According to \cite[Proposition~5.1]{chenmourrat2023cavity}, $f : \R_+ \times (\mcl Q(S^D_+) \cap L^1)\to \R$ is Lipschitz. More precisely, we have for every $(t,\mfk q),(t',\mfk q') \in \R_+ \times (\mcl Q(S^D_+) \cap L^1)$,
    \begin{equation*}
        |f(t,\mfk q) - f(t',\mfk q')| \leq |\mfk q- \mfk q'|_{L^1} +|t-t'| \sup_{|a| \leq 1} |\xi(a)|.
    \end{equation*}
    Let $|\cdot|$ denote the Frobenius norm on $S^D$. There exists a constant $c > 0$ depending only on $|\cdot|$ such that the following holds. For every $x \in \mcl Q^j(\R_+^2)$, we have 
    \begin{align*}
        |(\Lambda^jx)^\perp|_{L^1} &= \int_0^1 |m(\Lambda^jx(u))| \d u \\
                                     &= \int_0^1 |\Lambda^jx_1(u)|\left|\text{id}_D - \frac{\mathbf{1}_D}{D}\right|  + |\Lambda^jx_2(u)| \left|\frac{\mathbf{1}_D}{D}\right| \d u \\
                                     &\leq c\int_0^1 \frac{|\Lambda^jx_1(u)|+ |\Lambda^jx_2(u)|}{2} \d u \\
                                     &= c|\Lambda^jx|_{L^1} \\
                                     &\leq c |x|_1,
    \end{align*}
    where the last line is a consequence of Proposition~\ref{p.properties of Lambda}. Since $f^{\perp,j}(t,x) = f(t,(\Lambda^jx)^\perp)$, it follows that $f^{\perp,j} : \R_+ \times \mcl Q^j(\R^2_+) \to \R$ is Lipschitz. In particular,  
    \begin{equation*}
       \sup_{t > 0,x \in \mcl Q^j(\R^2_+)} \frac{|f^{\perp,j}(t,x)-f^{\perp,j}(0,x)|}{t} < +\infty \text{ and } \sup_{t > 0} \| f^{\perp,j}(t,\cdot)\|_{\text{Lip}}< +\infty.
    \end{equation*}
    Furthermore, according to \cite[Proposition~3.6]{chenmourrat2023cavity}, for every $t \geq 0$, $f(t,\cdot)$ is $(\mcl Q(S^D_+) \cap L^2)^*$-nondecreasing. Let $x,y \in \mcl Q^j(\R^2_+)$, such that $y -x \in (\mcl Q^j(\R^2_+))^*$, we have $(\Lambda^j x)^\perp, (\Lambda^j y)^\perp \in \mcl Q(S^D_+) \cap L^\infty$ and according to Proposition~\ref{p.properties of Lambda}, $(\Lambda^j(y-x))^\perp \in (\mcl Q(S^D_+) \cap L^2)^*$. Therefore,
     \begin{equation*}
         f^{\perp,j}(t,y) - f^{\perp,j}(t,x) = f(t,(\Lambda^j y)^\perp) - f(t,(\Lambda^j x)^\perp) \geq 0.
     \end{equation*}
     Thus, $f^{\perp,j}(t,\cdot)$ is $(\mcl Q^j(\R^2_+))^*$-nondecreasing and we have proven that $f^{\perp,j} \in \mcl V(\mcl Q^j(\R^2_+))$. This concludes Step 1.   
        
    \noindent \emph{Step 2.} We show that there exists $c > 0$ such that for every $j \geq 1$ and  every $(t,x) \in (0,+\infty) \times \mathring{\mcl Q^j(\R_+^2)}$, $f^{\perp,j}$ is differentiable at $(t,x)$ and  
    \begin{equation*}
        |\partial_t f^j(t,x) - H_\perp^j(\nabla f^j(t,x))| \leq \frac{c}{j}.
    \end{equation*}   

    \noindent For every $x \in \mathring{\mcl Q^j(\R_+^2)}$, $\Lambda^j x \in \mcl Q_\upa(\R^2_+) \cap L^\infty$. Using Proposition~\ref{p.fe is a strong solution}, we deduce that $f^{\perp,j}$ is differentiable on $(0,+\infty) \times \mathring{\mcl Q^j(\R_+^2)}$ and for every $(t,x) \in (0,+\infty) \times \mathring{\mcl Q^j(\R_+^2)}$ we have $\nabla f^{\perp,j}(t,x) = \Lambda_j(\nabla f^\perp(t,\Lambda^jx))$. In addition, we have 
    \begin{align*}
        \partial_t f^{\perp,j}(t,x) &= \partial_t f^\perp(t, \Lambda^jx) \\
                                     &= \int \xi_\perp(\nabla f^\perp(t, \Lambda^jx)) \\
                                     &= H_\perp^j(\nabla f^{\perp,j}(t, x)) + \int\left( \xi_\perp(\nabla f^\perp(t, \Lambda^jx)) - \xi_\perp(\Lambda^j \Lambda_j \nabla f^\perp(t, \Lambda^jx)) \right) .
    \end{align*}
     Since $f^\perp$ is $1$-Lipschitz with respect to $L^1$-norm, we have $|\nabla f^\perp(t,\Lambda^jx))|_{L^\infty} \leq 1$. Now, using Step 1 and Proposition~\ref{p. path approximation} we discover that 
    \begin{align*}
        \left|  \partial_t f^{\perp,j}(t,x) - H_\perp^j(\nabla f^{\perp,j}(t, x)) \right| &= |\xi_\perp(\nabla f^\perp(t, \Lambda^jx)) - \xi_\perp(\Lambda^j \Lambda_j \nabla f^\perp(t, \Lambda^jx)) |_1 \\
                                                                                        &\leq \ell_\perp |\nabla f^\perp(t, \Lambda^jx) - \Lambda^j \Lambda_j \nabla f^\perp(t, \Lambda^jx) |_{L^1} \\
                                                                                        &\leq \frac{c}{j},
    \end{align*}
    where $\ell_\perp = \sup_{|a| \leq 1} |\xi_\perp(a)|$ and $c \geq 0$ is some constant depending on $\xi$ and $|\cdot|$. 
\end{proof}

\begin{theorem}
    The function $f^\perp$ is the unique viscosity solution of
    \begin{equation} \label{e. symmetric HJ}
        \begin{cases} 
            \partial_t u^\perp - \int \xi_\perp(\nabla u^\perp) = 0 \text{ on } (0,+\infty) \times (\mcl Q(\R^2_+) \cap L^2) \\
            u^\perp(0,\cdot) = \psi^\perp.
        \end{cases}
    \end{equation}
   In addition, for every $(t,q) \in (0,+\infty) \times (\mcl Q(\R^2_+) \cap L^2)$, $f^\perp$ admits the Hopf-Lax representation at $(t,q)$,
   \begin{equation} \label{e.symmetric Hopf-Lax}
       f^\perp(t,q) = \sup_{p \in \mcl Q(\R_+^2) \cap L^\infty } \inf_{r \in \mcl Q(\R_+^2) \cap L^\infty} \left\{\psi^\perp(p+q) -\langle p,r \rangle_{L^2}+ t\int_0^1 \xi_\perp \left( r \right) \right\}.
   \end{equation}
\end{theorem}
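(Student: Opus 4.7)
The plan is to combine the finite-dimensional approximations from Section~\ref{ss. linear approximation} with the Hamilton-Jacobi theory on closed convex cones from Section~\ref{s.hj}. By Proposition~\ref{p.the approximation are approximate strong solutions}, the function $f^{\perp,j}(t,x) = f^\perp(t, \Lambda^j x)$ lies in $\mcl V(\mcl Q^j(\R^2_+))$ and is an approximate strong solution of the finite-dimensional Hamilton-Jacobi equation
\[
    \partial_t u - H_\perp^j(\nabla u) = 0 \text{ on } (0,+\infty) \times \mathring{\mcl Q^j(\R^2_+)}
\]
with initial data $\psi^{\perp,j}(x) := \psi^\perp(\Lambda^j x)$ and pointwise error bounded by $c/j$. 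The Hamiltonian $H_\perp^j(x) = \int \xi_\perp(\Lambda^j x)$ is convex (as $\xi_\perp$ is convex and $\Lambda^j$ is linear), locally Lipschitz, and $(\mcl Q^j(\R^2_+))^*$-nondecreasing, while $\psi^{\perp,j}$ is Lipschitz and $(\mcl Q^j(\R^2_+))^*$-nondecreasing. Hence Theorem~\ref{t. hj cone well posed} yields a unique viscosity solution $v^j \in \mcl V(\mcl Q^j(\R^2_+))$, and since $\mcl Q^j(\R^2_+)$ has the Fenchel-Moreau property (as a product of Fenchel-Moreau cones), $v^j$ admits the Hopf-Lax representation
\[
    v^j(t,x) = \sup_{y \in \mcl Q^j(\R^2_+)} \inf_{z \in \mcl Q^j(\R^2_+)} \left\{ \psi^{\perp,j}(x+y) - \langle y, z \rangle_j + t H_\perp^j(z) \right\}.
\]
Proposition~\ref{p. approximate strong solutions are approximate visco solutions} then provides the uniform bound $\sup_{x \in \mcl Q^j(\R^2_+)} |f^{\perp,j}(t,x) - v^j(t,x)| \leq ct/j$.

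I next pass to the limit $j \to \infty$. For any $q \in \mcl Q(\R^2_+) \cap L^\infty$, Proposition~\ref{p. path approximation} yields $|\Lambda^j \Lambda_j q - q|_{L^1} = O(1/j)$, so the $L^1$-Lipschitz continuity of $f^\perp$ implies
\[
    f^{\perp,j}(t, \Lambda_j q) = f^\perp(t, \Lambda^j \Lambda_j q) \xrightarrow[j\to\infty]{} f^\perp(t, q),
\]
and consequently $v^j(t, \Lambda_j q) \to f^\perp(t, q)$. To identify this limit with the right-hand side of \eqref{e.symmetric Hopf-Lax}, one proceeds via matching inequalities. For the $\limsup$ bound, any pair $(y, z) \in \mcl Q^j(\R^2_+)^2$ in the finite-dim Hopf-Lax lifts to $(p, r) = (\Lambda^j y, \Lambda^j z) \in (\mcl Q(\R^2_+) \cap L^\infty)^2$, and Proposition~\ref{p. apprxoimation decrease H} together with the adjoint relation $\langle \Lambda^j y, r \rangle_{L^2} = \langle y, \Lambda_j r \rangle_j$ from Section~\ref{ss. linear approximation} control the resulting terms. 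For the $\liminf$ bound, candidate paths $p, r \in \mcl Q(\R^2_+) \cap L^\infty$ are discretized via the projection $\Lambda_j$, and Lipschitz continuity of $\psi^\perp$ absorbs the approximation errors. This establishes \eqref{e.symmetric Hopf-Lax} for $q \in \mcl Q(\R^2_+) \cap L^\infty$, and then for all $q \in \mcl Q(\R^2_+) \cap L^2$ by density together with the Lipschitz continuity of both sides.

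Finally, the function defined by the right-hand side of \eqref{e.symmetric Hopf-Lax} is the unique viscosity solution of \eqref{e. symmetric HJ} in $\mcl V(\mcl Q(\R^2_+))$: existence follows from the Hopf-Lax theorem in the infinite-dimensional Fenchel-Moreau setting \cite{chen2022hamilton} (the infinite-dimensional analogue of the theory summarized in Section~\ref{s.hj}), and uniqueness follows from the infinite-dimensional comparison principle (Theorem~\ref{t.comparison}). Combined with the identification established above, $f^\perp$ is this unique viscosity solution, proving the theorem. The main technical obstacle is the passage to the limit in the finite-dim Hopf-Lax: because $\Lambda_j \Lambda^j \neq \mathrm{id}$, the discrete scalar product $\langle \cdot, \cdot \rangle_j$ does not correspond directly to $\langle \cdot, \cdot \rangle_{L^2}$ under lift-and-project, so the two inequalities must be handled separately via compatible choices of candidate paths in each representation.
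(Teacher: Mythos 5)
Your proposal follows essentially the same route as the paper's proof: build the finite-dimensional projections $f^{\perp,j}$, show via Propositions~\ref{p.the approximation are approximate strong solutions} and~\ref{p. approximate strong solutions are approximate visco solutions} that they are uniformly close to the unique finite-dimensional viscosity solutions $u^{\perp,j}$, pass to the limit by matching the finite- and infinite-dimensional Hopf--Lax formulas using the adjoint relation $\langle\Lambda^j x,p\rangle_{L^2}=\langle x,\Lambda_j p\rangle_j$ together with Proposition~\ref{p. apprxoimation decrease H}, and finish with a density argument. The one loose phrase is ``lifts to $(\Lambda^j y,\Lambda^j z)$'' in the $\limsup$ step --- lifting both variables does not preserve the discrete inner product, since $\langle\Lambda^jy,\Lambda^jz\rangle_{L^2}\neq\langle y,z\rangle_j$ --- but you explicitly flag this obstruction at the end, and the intended fix (parameterize the discrete variables by $\Lambda_j$ and apply the adjoint identity to one side only) is precisely what the paper's Step~2.1 carries out.
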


\begin{remark}
    Taking $q = 0$ in \eqref{e.symmetric Hopf-Lax}, we obtain Theorem~\ref{t.symmetric optimizer matrix}.
\end{remark}

\begin{proof}
    According to \cite[Theorem~4.6]{chen2022hamilton}, \eqref{e. symmetric HJ} has a unique Lipschitz viscosity solution $u^\perp$ and it is given by the variational formula \eqref{e.symmetric Hopf-Lax}. Let us show that $f^\perp$ and $u^\perp$ coincide on $[0,+\infty) \times (\mcl Q(\R^2_+) \cap L^2)$.
    
    Let $j \geq 1$, define Let $\psi^{\perp,j} = f^{\perp,j}(0,\cdot)$. According to Proposition~\ref{p.the approximation are approximate strong solutions}, $\psi^{\perp,j}$ is $(\mcl Q^j(\R^2_+))^*$-nondecreasing and Lipschitz. In addition, $H_\perp^j  \big|_{\mcl Q^j(\R^2_+)}$ is $(\mcl Q^j(\R^2_+))^*$-nondecreasing and locally Lipschitz. According to Theorem~\ref{t. hj cone well posed}, the following Hamilton-Jacobi equation is well posed
    \begin{equation*}
        \begin{cases}
            \partial_t u^{\perp,j} - H_\perp^j(\nabla u^{\perp,j}) = 0 \text{ on } (0,+\infty) \times \mathring{ \mcl Q^j(\R^2_+)} \\
            u^{\perp,j}(0,\cdot) = \psi^{\perp,j}.
        \end{cases}
    \end{equation*}
    We let $u^{\perp,j} \in \mcl V(\mcl Q^j(\R^2_+))$ be its unique viscosity solution.
    
    \noindent \emph{Step 1.} We show that, for every $(t,q) \in \R_+ \times( \mcl Q(\R^2_+) \cap L^\infty)$,
    \begin{equation*}
        f^\perp(t,q) = \lim_{j \to +\infty} u^{\perp,j} (t,\Lambda_j q).
    \end{equation*}
    \noindent According to Proposition~\ref{p. approximate strong solutions are approximate visco solutions} and Proposition~\ref{p.the approximation are approximate strong solutions}, there exists $c > 0$ such that for every $j \geq 1$ and $(t,x) \in \R_+ \times \mcl Q^j(\R^2_+)$, we have 
    \begin{equation}
        |f^{\perp,j}(t,x) -u^{\perp,j}(t,x)| \leq \frac{ct}{j}.
    \end{equation}
    Let $q \in \mcl Q(\R^2_+) \cap L^\infty$, according to Proposition~\ref{p. path approximation} as $j \to +\infty$ we have $\Lambda^j \Lambda_j q \to q$ in $L^1$. By Lipschitz continuity of $f^\perp$ we have
    \begin{equation*}
        f^\perp(t,q) = \lim_{j \to +\infty}  f^{\perp,j}(t,\Lambda_j q) = \lim_{j \to +\infty} u^{\perp,j} (t,\Lambda_j q).
    \end{equation*}
    \noindent \emph{Step 2.} We show that, for every $(t,q) \in \R_+ \times( \mcl Q(\R^2_+) \cap L^\infty)$, 
    \begin{equation*}
        \lim_{j \to +\infty} u^{\perp,j} (t,\Lambda_j q) = u^\perp(t,q).
    \end{equation*}
    \noindent According to \cite[Theorem~1.2~(2)~(d)]{chen2023viscosity}, $u^{\perp,j}$ admits the Hopf-Lax representation. That is, for every $(t,x) \in \R_+ \times \mcl Q^j(\R^2_+)$,
    \begin{equation*}
        u^{\perp,j}(t,x) = \sup_{y \in \mcl Q^j(\R^2_+)} \inf_{z \in \mcl Q^j(\R^2_+)} \left\{\psi^{\perp,j}(x+y) - \langle y , z \rangle_j + t \int \xi_\perp (\Lambda^j z) \right\}.
    \end{equation*}
    Similarly, from \cite[Theorem~1.1]{chen2022hamilton}, we know that for every $(t,q) \in \R_+ \times (\mcl Q(\R^2_+) \cap L^2)$ we have
    \begin{equation*}
        u^{\perp}(t,q) = \sup_{p \in \mcl Q(\R^2_+) \cap L^\infty} \inf_{r \in \mcl Q(\R^2_+) \cap L^\infty} \left\{\psi^{\perp}(q+p) - \langle p , r \rangle_{L^2} + t \int \xi_\perp (r) \right\}.
    \end{equation*}
    \noindent \emph{Step 2.1}  We show that, $\lim_{j \to +\infty} u^{\perp,j} (t,\Lambda_j q) \leq u^\perp(t,q)$.

    \noindent Observe that $\mcl Q^j(\R^2_+) = \{ \Lambda_j p, p \in \mcl Q(\R^2_+) \cap L^\infty \}$. So,
    \begin{equation*}
    \begin{split}
        u^{\perp,j}(t,\Lambda_j q) = \sup_{p \in \mcl Q(\R^2_+) \cap L^\infty} \inf_{r \in \mcl Q(\R^2_+) \cap L^\infty} \biggl\{\psi^{\perp}(\Lambda^j \Lambda_j q + \Lambda^j \Lambda_j p) \\ 
        - \langle\Lambda_j p, \Lambda_j r \rangle_j + t \int \xi_\perp (\Lambda^j \Lambda_j r) \biggl\}.            
    \end{split}
    \end{equation*}
    Since, $\langle\Lambda_j p, \Lambda_j r \rangle_j = \langle \Lambda^j\Lambda_j p,  r \rangle_{L^2}$ and $\{ \Lambda^j\Lambda_j p, p \in \mcl Q(\R^2_+) \cap L^\infty\} \subset \mcl Q(\R^2_+) \cap L^\infty$. We have
    \begin{equation*}
    \begin{split}
        u^{\perp,j}(t,\Lambda_j q) \leq \sup_{p \in \mcl Q(\R^2_+) \cap L^\infty} \inf_{r \in \mcl Q(\R^2_+) \cap L^\infty} \biggl\{\psi^{\perp}(\Lambda^j \Lambda_j q + p) \\- \langle p, r \rangle_{L^2} + t \int \xi_\perp (\Lambda^j \Lambda_j r) \biggl\}.
     \end{split}
    \end{equation*}
    Finally, according to Proposition~\ref{p. apprxoimation decrease H}, we have $\int \xi_\perp (\Lambda^j \Lambda_j r) \leq \int \xi_\perp(r)$, so 
    \begin{equation*}
        u^{\perp,j}(t,\Lambda_j q) \leq \sup_{p \in \mcl Q(\R^2_+) \cap L^\infty} \inf_{r \in \mcl Q(\R^2_+) \cap L^\infty} \left\{\psi^{\perp}(\Lambda^j \Lambda_j q + p) - \langle p, r \rangle_{L^2} + t \int \xi_\perp (r) \right\}.
    \end{equation*}
   Using the Lipschitz continuity of $\psi^\perp$, we discover that
    \begin{equation*}
    \begin{split}
          u^{\perp,j}(t,\Lambda_j q) \leq &|\Lambda^j \Lambda_j q - q|_1 \\ &+ \sup_{p \in \mcl Q(\R^2_+) \cap L^\infty} \inf_{r \in \mcl Q(\R^2_+) \cap L^\infty} \left\{\psi^{\perp}(q + p) - \langle p, r \rangle_{L^2} + t \int \xi_\perp (r) \right\}.       
    \end{split}
    \end{equation*}
    Using Proposition~\ref{p. path approximation}, we obtain $\lim_{j \to +\infty} u^{\perp,j} (t,\Lambda_j q) \leq u^\perp(t,q)$.

    \noindent \emph{Step 2.2}  We show that, $\lim_{j \to +\infty} u^{\perp,j} (t,\Lambda_j q) \geq u^\perp(t,q)$.

    \noindent For every $\varepsilon > 0$, there exists $p \in \mcl Q(\R^2_+) \cap L^\infty$, such that 
    \begin{align*}
        u^\perp(t,q) \leq \varepsilon + \psi^\perp(p+q) + \inf_{r \in \mcl Q(\R^2_+) \cap L^\infty} \left\{ - \langle p,r \rangle_{L^2} +t \int \xi_\perp(r) \right\}.
    \end{align*}
    We have $\{ \Lambda^j z, z \in \mcl Q^j(\R^2_+) \} \subset \mcl Q(\R^2_+) \cap L^\infty$. So,
    \begin{align*}
        u^\perp(t,q) \leq \varepsilon + \psi^\perp(p+q) + \inf_{z \in \mcl Q^j(\R^2_+)} \left\{ - \langle p,\Lambda^j z \rangle_{L^2} +t \int \xi_\perp(\Lambda^j z) \right\}.
    \end{align*}
    Using the Lipschitz continuity of $\psi^\perp$, we have $\psi^\perp(p+q) \leq \psi^{\perp,j}( \Lambda_jp +\Lambda_j q) + |\Lambda^j\Lambda_j (p+q) - (p+q)|_{L^1}$. And we have $\langle p,\Lambda^j z \rangle_{L^2} = \langle \Lambda_j p, z \rangle_j$. So,
    \begin{align*}
        u^\perp(t,q) \leq \varepsilon &+ |\Lambda^j\Lambda_j (p+q) - (p+q)|_{L^1}+ \psi^{\perp,j}( \Lambda_jp +\Lambda_j q) \\
        &+ \inf_{z \in \mcl Q^j(\R^2_+)} \left\{ - \langle \Lambda_j p, z \rangle_j +t \int \xi_\perp(\Lambda^j z) \right\}.
    \end{align*}
    Since, $\{\Lambda_j p , p \in \mcl Q(\R^2_+) \cap L^\infty \} \subset \mcl Q^j(\R^2_+)$, obtain 
     \begin{align*}
        u^\perp(t,q) \leq \varepsilon + |\Lambda^j\Lambda_j (p+q) - (p+q)|_{L^1}+ u^{\perp,j}(t,\Lambda_j q).
    \end{align*}
    Using Proposition~\ref{p. path approximation} and letting $j \to +\infty$, we obtain
    \begin{equation*}
        u^\perp(t,q) \leq \varepsilon + \lim_{j \to +\infty} u^{\perp,j}(t,\Lambda_j q),
    \end{equation*}
    since $\varepsilon > 0$ is arbitrary this concludes Step 2.

    \noindent \emph{Step 3.} Conclusion.

    From Step 1 and Step 2, we deduce that $f^\perp$ and $u^\perp$ coincide on $\R_+ \times( \mcl Q(\R^2_+) \cap L^\infty)$. Since $\mcl Q(\R^2_+) \cap L^\infty$ is dense in $\mcl Q(\R^2_+) \cap L^2$ with respect to $L^1$ convergence and $f^\perp$ and $u^\perp$ are both Lipschitz continuous with respect to $|\cdot|_{L^1}$, we conclude that $f^\perp$ and $u^\perp$ coincide on $\R_+ \times( \mcl Q(\R^2_+) \cap L^2)$.
    \end{proof}

\subsection{The equation for models with vector valued paths} \label{ss.vector paths}

In this section, we repeat the analysis of Section~\ref{ss.matrix paths} under the additional assumption that the interaction function $\xi$ only depends on the diagonal coefficients of the overlap matrix. In this case, we can use the fact the value of the limit free energy is encoded by a Hamilton-Jacobi equation on $\mcl Q(\R^D_+)$, as pointed out in \eqref{e. HJ smooth diag}. In this setting, permutation-invariant paths $q \in \mcl Q(\R^D_+)$ are of the form $q = (p,\dots,p)$ with $p \in \mcl Q(\R_+)$. Leveraging symmetries of the problem, this will allow us to show that the limit free energy is in fact encoded by a Hamilton-Jacobi equation on $\mcl Q(\R_+)$. Since the proofs are very similar, we will not write them out in as many details as in the previous section.

For $p \in \mcl Q(\R_+)$, we let $\mfk q = \text{diag}(p,\dots,p) \in \mcl Q(S^D_+)$ denote the path such that $\mfk q(u)$ is the diagonal matrix with diagonal coefficients $(p(u),\dots,p(u))$. We define $f^\dagger(t,p) = f(t,\text{diag}(p,\dots,p))$, $\psi^\dagger(p) = \psi(\text{diag}(p,\dots,p))$. Recall that here, we identify the function $A \mapsto \xi(A)$ defined on $\R^{D \times D}$ and the function $x \mapsto \xi(\text{diag}(x))$ defined on $\R^D$. With this in mind, we set $\xi_\dagger(\lambda)=  \xi(\lambda,\dots,\lambda)$. 

\begin{proposition} \label{p. strong solution vector}
    The function $f^\dagger$ is Gateaux differentiable at every $(t,p) \in (0,+\infty) \times \left( \mcl Q_\upa(\R_+) \cap L^\infty \right)$ and satisfies
    \begin{equation*}
        \begin{cases}
            \partial_t f^\dagger - \int \xi_\dagger\left(\frac{\nabla f^\dagger}{D}\right) = 0 \text{ on } (0,+\infty) \times \left( \mcl Q_{\upa}(\R_+) \cap L^\infty \right) \\
            f^\dagger(0,\cdot) = \psi^\dagger \text{ on } \mcl Q_{\upa}(\R_+) \cap L^\infty.
        \end{cases}
    \end{equation*}
\end{proposition}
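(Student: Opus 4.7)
The strategy parallels the matrix case treated in Proposition~\ref{p.fe is a strong solution}: I will differentiate through the embedding $p \mapsto (p,\dots,p)$ and exploit the smooth Hamilton-Jacobi equation \eqref{e. HJ smooth diag} for $f^\text{diag}$ together with the permutation-invariance of its gradient to identify both $\nabla f^\dagger$ and $\partial_t f^\dagger$.

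First I would record the diagonal analog of Proposition~\ref{p. fe is perm invariant}: for every $t \geq 0$, $q \in \mcl Q(\R^D_+) \cap L^2$ and $s \in \msc S_D$, setting $q^s = (q_{s(1)},\dots,q_{s(D)})$, we have $f^\text{diag}(t,q^s) = f^\text{diag}(t,q)$. This is immediate from the identity $\text{diag}(q^s) = (\text{diag}(q))^s$ combined with Proposition~\ref{p. fe is perm invariant}. Next, repeating the argument of Proposition~\ref{p.permutation invariant fe} in the vector setting, I would show that at $q = (p,\dots,p)$ with $p \in \mcl Q_\upa(\R_+) \cap L^\infty$, the gradient $\nabla f^\text{diag}(t,q)$ is permutation-invariant as an element of $L^2([0,1),\R^D)$, hence of the form $(r,\dots,r)$ for some $r \in \mcl Q(\R_+) \cap L^2$. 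The existence of this Gateaux derivative follows from \eqref{e. HJ smooth diag} once one checks that $(p,\dots,p) \in \mcl Q_\upa(\R^D_+) \cap L^\infty$, which reduces to $p \, \text{id}_D \in \mcl Q_\upa(S^D_+)$; this is straightforward since the ellipticity of scalar multiples of $\text{id}_D$ equals one and the monotonicity condition is inherited from $p$.

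For any admissible direction $\kappa \in \text{Adm}(p)$, the direction $(\kappa,\dots,\kappa)$ is admissible at $(p,\dots,p)$, and passing to the limit in
\begin{equation*}
\frac{f^\dagger(t,p+\varepsilon\kappa)-f^\dagger(t,p)}{\varepsilon} = \frac{f^\text{diag}(t,(p,\dots,p)+\varepsilon(\kappa,\dots,\kappa))-f^\text{diag}(t,(p,\dots,p))}{\varepsilon}
\end{equation*}
yields $\langle \nabla f^\dagger(t,p),\kappa\rangle_{L^2} = \langle (r,\dots,r),(\kappa,\dots,\kappa)\rangle_{L^2} = D\langle r,\kappa\rangle_{L^2}$, so that $\nabla f^\dagger(t,p) = D r$, or equivalently $r = \nabla f^\dagger(t,p)/D$. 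Combined with \eqref{e. HJ smooth diag} this gives
\begin{equation*}
\partial_t f^\dagger(t,p) = \partial_t f^\text{diag}(t,(p,\dots,p)) = \int \xi((r,\dots,r)) = \int \xi_\dagger(r) = \int \xi_\dagger\left(\frac{\nabla f^\dagger(t,p)}{D}\right),
\end{equation*}
which is the stated equation, while the initial condition $f^\dagger(0,\cdot) = \psi^\dagger$ is immediate from the definitions. I do not anticipate a substantial obstacle: the argument is a direct transposition of the matrix-valued analysis of Section~\ref{ss.matrix paths}, the only delicate points being the permutation-invariance of the target coordinates of $\nabla f^\text{diag}$ and the correct bookkeeping of the factor $D$ coming from $\langle (r,\dots,r),(\kappa,\dots,\kappa)\rangle_{L^2}$.
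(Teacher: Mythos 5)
Your proposal is correct and follows essentially the same route as the paper: in both cases one passes through $f^{\mathrm{diag}}$, invokes the permutation invariance of the gradient at points of the form $(p,\dots,p)$, identifies $\nabla f^\dagger(t,p) = Dr$ where $\nabla f^{\mathrm{diag}}(t,(p,\dots,p)) = (r,\dots,r)$, and then reads off the equation from \eqref{e. HJ smooth diag}. The only cosmetic difference is that the paper appeals directly to Proposition~\ref{p.permutation invariant fe} at $q = (p,p) \in \mcl Q_\upa(\R^2_+)$ (whose image under $(\cdot)^\perp$ is $p\,\text{id}_D$) to get permutation invariance of the matrix-valued gradient, while you re-derive the analogous invariance statement for $\nabla f^{\mathrm{diag}}$ directly from the diagonal analog of Proposition~\ref{p. fe is perm invariant}; this is an equivalent route and your factor-of-$D$ bookkeeping matches the paper's.
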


\begin{proof}
    We reproduce the proof of Proposition~\ref{p.fe is a strong solution}, replacing $f$ by $f^\text{diag}$, the function defined by $f^\text{diag}(t,q) = f(t,\text{diag}(q))$ for $q \in \mcl Q(\R^D_+)$. Let $(t,p) \in \R_+ \times \left( \mcl Q_\upa(\R_+) \cap L^\infty \right)$ according to Proposition~\ref{p.permutation invariant fe}, $f$ is Gateaux differentiable at $(t,\text{diag}(p,\dots,p))$ and its gradient is a permutation-invariant path. Therefore, $f^\text{diag}$ is Gateaux differentiable at $(t,(p,\dots,p))$ and its gradient $\nabla f^\text{diag}(t,(p,\dots,p)) \in \mcl Q(\R^D_+)$ satisfies $\nabla f^\text{diag}(t,(p,\dots,p)) = (r,\dots,r)$ for some $r \in \mcl Q(\R_+)$. It follows that $f^\dagger$ is Gateaux differentiable at $(t,p)$ and its Gateaux derivative satisfies $\nabla f^\dagger(t,p) = Dr$. According to \eqref{e. HJ smooth diag}, we have 
    \begin{align*}
        \partial_t f^\dagger(t,p) = \partial_t f^\text{diag}(t,p) = \int \xi(\nabla f^\text{diag}(t,(p,\dots,p))) = \int \xi_\dagger\left(\frac{\nabla f^\dagger(t,p)}{D}\right).
    \end{align*}   
\end{proof}

Recall from \eqref{e.approx cone} that
\begin{equation*}
    \mcl Q^j(\R_+) = \{x \in \R_+^j \big| \; x_1 \leq \dots \leq x_j \}.
\end{equation*}
Also recall that given $x \in \mcl Q^j(\R_+)$, $\Lambda^j x$ denote the path which linearly interpolates between the values $(0,0),(\frac{1}{j},x_1),\dots,(1,x_j)$ and that given $p \in \mcl Q(\R_+)$ the vector $\Lambda_j p \in \mcl Q^j(\R_+)$ is defined so that $\Lambda_j$ and $\Lambda^j$ form an adjoint pair. As previously, we define $f^{\dagger,j}(t,x)=  f^\dagger(t,\Lambda^jx)$ and $H_\dagger^j(x) = \int_0^1 \xi_\dagger(\Lambda^j x)$.

\begin{proposition} \label{p. approximate strong solution vector}
     There exists a constant $c > 0$ such that the following holds. For every $j \geq 1$, the function $f^{\dagger,j}$ is differentiable on $(0,+\infty) \times \mathring{\mcl Q^j(\R_+)}$ and for every $(t,x) \in (0,+\infty) \times \mathring{\mcl Q^j(\R_+)}$, 
   \begin{equation*}
       \left|\partial_t f^{\dagger,j}(t,x) - H_\dagger^j\left(\frac{\nabla f^{\dagger,j}(t,x)}{D}\right)\right| \leq \frac{c}{j}.
   \end{equation*}
   Furthermore, $f^{\dagger,j}  \in \mcl V(\mcl Q^j(\R_+))$.
\end{proposition}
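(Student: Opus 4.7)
The plan is to mirror the proof of Proposition~\ref{p.the approximation are approximate strong solutions} with the vector (dagger) objects replacing the matrix (perp) ones throughout.

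First, I would check that $f^{\dagger,j} \in \mcl V(\mcl Q^j(\R_+))$. The joint Lipschitz continuity of $f^{\dagger,j}$ in $t$ and $x$ follows from the Lipschitz continuity of $f^\text{diag}$ (inherited from $f$ via \cite[Proposition~5.1]{chenmourrat2023cavity}) together with the bound $|\Lambda^j x|_{L^1} \leq |x|_1$ coming from the first half of Proposition~\ref{p.properties of Lambda}, whose argument is scalar and applies verbatim in $\mcl Q^j(\R_+)$. For monotonicity, I would prove a one-dimensional analog of the adjoint statement in Proposition~\ref{p.properties of Lambda}: if $x \in (\mcl Q^j(\R_+))^*$ then $\Lambda^j x \in (\mcl Q(\R_+) \cap L^2)^*$. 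Combined with the fact that $f^\text{diag}$ is $(\mcl Q(\R^D_+) \cap L^2)^*$-nondecreasing, which follows from \cite[Proposition~3.6]{chenmourrat2023cavity} and from the observation that the diagonal embedding $\mcl Q(\R^D_+) \hookrightarrow \mcl Q(S^D_+)$ is compatible with the dual cones, this yields the required $(\mcl Q^j(\R_+))^*$-monotonicity of $f^{\dagger,j}(t,\cdot)$.

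Next, for differentiability and the approximate equation, I would note that $x \in \mathring{\mcl Q^j(\R_+)}$ implies $\Lambda^j x \in \mcl Q_\upa(\R_+) \cap L^\infty$, so Proposition~\ref{p. strong solution vector} supplies Gateaux differentiability of $f^\dagger$ at $(t, \Lambda^j x)$ together with the strong identity $\partial_t f^\dagger = \int \xi_\dagger(\nabla f^\dagger/D)$. The adjoint relation $\langle \Lambda^j x, p \rangle_{L^2} = \langle x, \Lambda_j p \rangle_j$ then forces $f^{\dagger,j}$ to be differentiable at $(t,x)$ with $\nabla f^{\dagger,j}(t,x) = \Lambda_j \nabla f^\dagger(t, \Lambda^j x)$ and $\partial_t f^{\dagger,j}(t,x) = \partial_t f^\dagger(t,\Lambda^j x)$. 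Unfolding the definition of $H^j_\dagger$ leads to
\begin{equation*}
\partial_t f^{\dagger,j}(t,x) - H_\dagger^j\!\left( \frac{\nabla f^{\dagger,j}(t,x)}{D}\right) = \int \xi_\dagger\!\left( \frac{\nabla f^\dagger(t,\Lambda^j x)}{D}\right) - \int \xi_\dagger\!\left( \Lambda^j \Lambda_j \frac{\nabla f^\dagger(t,\Lambda^j x)}{D} \right).
\end{equation*}

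To close the bound, I would use that Lipschitz continuity of $f^\dagger$ with respect to $|\cdot|_{L^1}$ keeps $|\nabla f^\dagger(t,\Lambda^j x)|_{L^\infty}$ bounded by a constant independent of $j$ and $x$, so both arguments of $\xi_\dagger$ live in a fixed compact set on which $\xi_\dagger$ is Lipschitz. The single-component version of Proposition~\ref{p. path approximation}, which is literally the intermediate estimate established in the first half of that proof, gives $|h - \Lambda^j \Lambda_j h|_{L^1} \leq c|h|_{L^\infty}/j$ for $h \in \mcl Q(\R_+) \cap L^\infty$, which together with the Lipschitz bound on $\xi_\dagger$ delivers the $c/j$ bound. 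The main obstacle I anticipate is not any individual estimate but the careful bookkeeping around the factor $1/D$ inside $\xi_\dagger$ and the correct identification $\nabla f^\dagger(t,p) = D\, r$, where $r$ is the common component of the diagonal vector $\nabla f^\text{diag}(t,(p,\dots,p))$ supplied by Proposition~\ref{p.permutation invariant fe}, so that the approximate PDE reproduces exactly the stated form.
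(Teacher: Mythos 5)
Your proposal is correct and would close the proof, but it takes a genuinely different route from the paper's. The paper proves this proposition by leveraging the identity $f^\dagger(t,p)=f^\perp(t,(p,p))$, which holds because $(p,p)^\perp=p\,\text{id}_D$; it then simply black-boxes Proposition~\ref{p.the approximation are approximate strong solutions}, transferring the Lipschitz estimates, the monotonicity, and the $c/j$ bound through the embedding $x\mapsto(x,x)$. The only new observations the paper needs are that $x\in\mathring{\mcl Q^j(\R_+)}$ implies $(x,x)\in\mathring{\mcl Q^j(\R^2_+)}$ and that $x'-x\in(\mcl Q^j(\R_+))^*$ implies $(x',x')-(x,x)\in(\mcl Q^j(\R^2_+))^*$. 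Your proposal instead re-derives every ingredient directly in the scalar setting: the one-dimensional adjoint relation, the bound $|\Lambda^j x|_{L^1}\leq|x|_1$, the scalar approximation estimate, the monotonicity via $f^{\text{diag}}$, and the error decomposition with a Lipschitz bound on $\xi_\dagger$. Both approaches are sound; the paper's is shorter and highlights the structural reduction from $\dagger$ to $\perp$, while yours is more self-contained and arguably makes the bookkeeping of the $1/D$ factor (which you correctly flag as the main subtlety) more transparent. One step in your monotonicity argument is stated only implicitly and is worth making explicit: to pass from the $(\mcl Q(\R^D_+)\cap L^2)^*$-monotonicity of $f^{\text{diag}}$ to the $(\mcl Q(\R_+)\cap L^2)^*$-monotonicity of $f^\dagger$, you need the bridge that $\kappa\in(\mcl Q(\R_+)\cap L^2)^*$ implies $(\kappa,\dots,\kappa)\in(\mcl Q(\R^D_+)\cap L^2)^*$; this is immediate since $\langle(\kappa,\dots,\kappa),q\rangle_{L^2}=\sum_{d=1}^D\langle\kappa,q_d\rangle_{L^2}\geq 0$ for $q=(q_1,\dots,q_D)\in\mcl Q(\R^D_+)\cap L^2$, but it is folded somewhat opaquely into your phrase ``compatible with the dual cones,'' which as written refers to a different embedding.
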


\begin{proof}

    The proof will follow from Proposition~\ref{p.the approximation are approximate strong solutions} and the fact that $f^\dagger(t,p) = f^\perp(t,(p,p))$.
   
    \noindent \emph{Step 1.} We show that $f^{\dagger,j} \in \mcl V(\mcl Q^j(\R_+))$.

    According to Proposition~\ref{p.the approximation are approximate strong solutions}, $f^{\perp,j} \in \mcl V(\mcl Q^j(\R^2_+))$.  In particular,  
    \begin{align*}
      \sup_{\substack{t > 0 \\ x \in \mcl Q^j(\R_+)}} \frac{|f^{\dagger,j}(t,x)-f^{\dagger,j}(0,x)|}{t} &= \sup_{\substack{t > 0 \\ x \in \mcl Q^j(\R_+)}} \frac{|f^{\perp,j}(t,(x,x))-f^{\perp,j}(0,(x,x))|}{t}\\
                                                                                          &\leq \sup_{\substack{t > 0 \\ y \in \mcl Q^j(\R^2_+)}} \frac{|f^{\perp,j}(t,y)-f^{\perp,j}(0,y)|}{t}  \\
                                                                                          &< +\infty.
    \end{align*}
    Similarly,
     \begin{align*}
        \sup_{t > 0} \| f^{\dagger,j}(t,\cdot)\|_{\text{Lip}} &= \sup_{t > 0} \|x \mapsto  f^{\perp,j}(t,(x,x))\|_{\text{Lip}} \\
                                                                      &\leq \sup_{t > 0} \| f^{\perp,j}(t,\cdot)\|_{\text{Lip}} \\
                                                                      &<+\infty.
    \end{align*}
    Furthermore, given $x,x' \in \mcl Q^j(\R_+)$, it is clear that if $x' - x \in (\mcl Q^j(\R_+))^*$, then $(x',x') -(x,x) \in (\mcl Q^j(\R^2_+))^*$. So, 
     \begin{equation*}
        f^{\dagger,j}(t,x') - f^{\dagger,j}(t,x) = f^{\perp,j}(t,(x',x')) - f^{\perp,j}(t,(x,x)) \geq 0.
     \end{equation*}
     Thus, $f^{\dagger,j}(t,\cdot)$ is $(\mcl Q^j(\R_+))^*$-nondecreasing. We have proven that $f^{\dagger,j} \in \mcl V(\mcl Q^j(\R^2_+))$, this concludes Step 1.   
        
    \noindent \emph{Step 2.} We show that there exists $c > 0$ such that for every $j \geq 1$ and  every $(t,x) \in (0,+\infty) \times \mathring{\mcl Q^j(\R_+)}$, $f^{\dagger,j}$ is differentiable at $(t,x)$ and  
    \begin{equation*}
        \left|\partial_t f^{\dagger,j}(t,x) - H_\dagger^j\left(\frac{\nabla f^{\dagger,j}(t,x)}{D}\right)\right| \leq \frac{c}{j}.
    \end{equation*}   

    \noindent For every $x \in \mathring{\mcl Q^j(\R_+)}$, $(x,x) \in \mathring{\mcl Q^j(\R^2_+)}$. Using Proposition~\ref{p.the approximation are approximate strong solutions}, we deduce that $f^{\dagger,j}$ is differentiable on $(0,+\infty) \times \mathring{\mcl Q^j(\R_+)}$ and for every $(t,x) \in (0,+\infty) \times \mathring{\mcl Q^j(\R_+)}$, we have 
    \begin{equation*}
        \partial_t f^{\dagger,j}(t,x) - H^j_\dagger\left(\frac{\nabla f^{\dagger,j}(t,x)}{D}\right) =  \partial_t f^{\perp,j}(t,(x,x)) - H^j_\perp(\nabla f^{\perp,j}(t,(x,x))).
    \end{equation*}
    The result then follows from Proposition~\ref{p.the approximation are approximate strong solutions}.
\end{proof}

\begin{theorem} \label{t.symmetric HJ vector}
    The function $f^\dagger$ is the unique viscosity solution of,
    \begin{equation} \label{e. symmetric HJ vector}
        \begin{cases} 
            \partial_t u^\dagger - \int \xi_\dagger\left(\frac{\nabla u^\dagger}{D}\right) = 0 \text{ on } (0,+\infty) \times \left(\mcl Q(\R_+) \cap L^2 \right)\\
            u^\dagger(0,\cdot) = \psi^\dagger.
        \end{cases}
    \end{equation}
   In addition, for every $(t,r) \in [0,+\infty) \times (\mcl Q(\R_+) \cap L^2)$, $f^\dagger$ admits the Hopf-Lax representation,
   \begin{equation} \label{e.symmetric Hopf-Lax vector}
       f^\dagger(t,r) = \sup_{p \in \mcl Q(\R_+) \cap L^\infty} \left\{\psi^\dagger(p) -t \int_0^1 \xi^* \left( \frac{p-r}{t}, \dots , \frac{p-r}{t} \right) \right\}.
   \end{equation}
\end{theorem}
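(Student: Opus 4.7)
The plan is to mirror the proof of the matrix case. The function $f^\dagger$ should be shown to be the unique viscosity solution of \eqref{e. symmetric HJ vector} by finite-dimensional approximation, and the Hopf-Lax representation will follow from the general theory together with an algebraic reduction specific to the diagonal setting.

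First, I would introduce, for each $j \geq 1$, the approximation $f^{\dagger,j}(t,x) = f^\dagger(t,\Lambda^j x)$ on $\R_+ \times \mcl Q^j(\R_+)$. Proposition~\ref{p. approximate strong solution vector} tells us that $f^{\dagger,j} \in \mcl V(\mcl Q^j(\R_+))$ and is differentiable on $(0,+\infty) \times \mathring{\mcl Q^j(\R_+)}$, with $|\partial_t f^{\dagger,j} - H^j_\dagger(\nabla f^{\dagger,j}/D)| \leq c/j$. Theorem~\ref{t. hj cone well posed} then provides a unique viscosity solution $u^{\dagger,j} \in \mcl V(\mcl Q^j(\R_+))$ of the approximate equation with the same initial condition, and Proposition~\ref{p. approximate strong solutions are approximate visco solutions} yields $|f^{\dagger,j}-u^{\dagger,j}| \leq ct/j$. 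Passing to the limit as in Steps~1 and~2 of the proof of the matrix case, and using the adjointness of $\Lambda^j$ and $\Lambda_j$, Proposition~\ref{p. path approximation}, and the pointwise convexity estimate from (the analogue of) Proposition~\ref{p. apprxoimation decrease H} applied to $\xi_\dagger$, I would show that both $f^{\dagger,j}(t,\Lambda_j r)$ and $u^{\dagger,j}(t,\Lambda_j r)$ converge to $f^\dagger(t,r)$ and $u^\dagger(t,r)$ respectively, where $u^\dagger$ is the unique viscosity solution of \eqref{e. symmetric HJ vector} provided by \cite[Theorem~4.6]{chen2022hamilton}. By density of $\mcl Q(\R_+) \cap L^\infty$ in $\mcl Q(\R_+) \cap L^2$ and Lipschitz continuity, this gives $f^\dagger = u^\dagger$.

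For the Hopf-Lax formula, the cited result from \cite{chen2022hamilton} yields the standard $\sup\text{-}\inf$ representation
\begin{equation*}
    u^\dagger(t,r) = \sup_{p' \in \mcl Q(\R_+) \cap L^\infty} \inf_{s \in \mcl Q(\R_+) \cap L^\infty} \Ll\{\psi^\dagger(p'+r) -\langle p',s\rangle_{L^2} + t\int_0^1 \xi_\dagger(s/D)\Rr\}.
\end{equation*}
The change of variables $p = p'+r$ together with pointwise duality (available thanks to the Fenchel-Moreau property of $\mcl Q(\R_+)$) converts the inner infimum into a Lebesgue integral of the convex conjugate $\xi_\dagger^*$ evaluated at $D(p-r)/t$. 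It remains to rewrite this using $\xi^*$ in the form stated in the theorem.

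The main obstacle is precisely this last identification, which requires the pointwise identity $\xi_\dagger^*(Dz) = \xi^*(z,\dots,z)$. To obtain it, I would use the permutation invariance and convexity of $\xi$: for any $x \in \R_+^D$ with mean $\bar x = \frac{1}{D}\sum_d x_d$, Jensen's inequality combined with permutation invariance gives $\xi_\dagger(\bar x) = \xi(\bar x, \dots, \bar x) \leq \frac{1}{D!}\sum_{s \in \msc S_D} \xi(x_{s(1)},\dots,x_{s(D)}) = \xi(x)$, while $z\sum_d x_d = Dz \bar x$. Therefore the supremum in $\xi^*(z,\dots,z) = \sup_{x \in \R_+^D}\{z \sum_d x_d - \xi(x)\}$ can be restricted to the diagonal $x = (\bar x,\dots,\bar x)$, matching $\xi_\dagger^*(Dz)$ exactly (this is the same symmetrization argument as in Proposition~\ref{p.convex dual xi_perp}). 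Plugging this identity back into the Hopf-Lax formula yields the statement of the theorem, with no constraint $p \geq r$ needed because $\xi^*$ defined with respect to $S^D_+$ automatically handles general $p \in \mcl Q(\R_+)$.
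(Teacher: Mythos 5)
Your proof of the uniqueness of the viscosity solution follows the paper's proof essentially verbatim: set up $f^{\dagger,j} = f^\dagger(t,\Lambda^j\cdot)$, invoke Propositions~\ref{p. approximate strong solution vector} and~\ref{p. approximate strong solutions are approximate visco solutions} to compare with the finite-dimensional viscosity solutions $u^{\dagger,j}$, pass to the limit via the Hopf--Lax formulas, adjointness and Proposition~\ref{p. path approximation}, and conclude by density. That part matches the paper.

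The deviation is in how you recover the ``pure sup'' representation~\eqref{e.symmetric Hopf-Lax vector}. The paper cites \cite[Proposition~A.3]{chen2022hamilton}, which directly gives the one-dimensional Hopf--Lax formula in sup form; you instead start from the sup--inf form of \cite[Theorem~4.6]{chen2022hamilton} and reduce the inner infimum by hand. That route is workable, but two of your justifications are shaky. First, the Fenchel--Moreau property of $\mcl Q(\R_+)$ is a statement about bi-conjugation $g^{**}=g$, not about whether $\inf_{s\in\mcl Q(\R_+)\cap L^\infty}\{-\langle p',s\rangle_{L^2}+t\int\xi_\dagger(s/D)\}$ can be evaluated pointwise; you actually need to argue that the pointwise minimizer of $s\mapsto -p'(u)s+t\xi_\dagger(s/D)$ is itself a nondecreasing c\`adl\`ag path (which follows from the monotonicity of $p'$ and the convexity of $\xi_\dagger$), or to cite the relevant result. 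Second, the claim that ``no constraint $p\geq r$ is needed because $\xi^*$ defined with respect to $S^D_+$ automatically handles general $p$'' appears to rest on the belief that $\xi^*$ is $+\infty$ at arguments outside the dual cone. It is not: since $\xi\geq 0$ on $S^D_+$ and $\xi(0)=0$, one has $\xi^*(\lambda\,\text{id}_D)=0$ for $\lambda\leq 0$, so a path with $p<r$ on a set of positive measure incurs no penalty from the $\xi^*$ term. Equating the sup over $\{p\geq r\}$ with the unconstrained sup over $\mcl Q(\R_+)\cap L^\infty$ is therefore a nontrivial assertion that needs either the cited proposition or a separate monotonicity argument for $\psi^\dagger$. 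Finally, your reduction $\xi^*(z,\dots,z)=\xi_\dagger^*(Dz)$ via symmetrization and Jensen is fine and is indeed the same argument the paper points to when it says ``reproducing the proof of Proposition~\ref{p.convex dual xi_perp}.''
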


\begin{remark}
    Taking $r = 0$ in \eqref{e.symmetric Hopf-Lax vector}, we obtain the variational formula \eqref{e.symmetric optimizer vector} of Theorem~\ref{t.symmetric optimizer vector}.
\end{remark}

\begin{remark}
    Here, since \eqref{e. symmetric HJ vector} is written over the set of $1$-dimensional paths, a different version of the Hopf-Lax representation is available \cite[Proposition~A.3]{chen2022hamilton}. This allows us to write the viscosity solution as a “sup” rather than a “sup inf”.
\end{remark}

\begin{remark}
    Reproducing the proof of Proposition~\ref{p.convex dual xi_perp}, we have for every $\lambda \geq 0$,
    \begin{equation*}
        \xi^*(\lambda,\dots,\lambda)= \left( \xi_\dagger \left( \frac{\cdot}{D}\right)\right)^*(\lambda).
    \end{equation*}
\end{remark}


\begin{proof}
    According to \cite[Theorem~4.6]{chen2022hamilton} and \cite[Proposition~A.3]{chen2022hamilton}, \eqref{e. symmetric HJ vector} has a unique Lipschitz viscosity solution $u^\dagger$ and it is given by the variational formula \eqref{e.symmetric Hopf-Lax vector}. Let us show that $f^\dagger$ and $u^\dagger$ coincide on $[0,+\infty) \times (\mcl Q(\R_+) \cap L^2)$.
    
    Let $j \geq 1$, define $\psi^{\dagger,j} = f^{\dagger,j}(0,\cdot)$, according to Proposition~\ref{p. approximate strong solution vector}, $\psi^{\dagger,j}$ is $(\mcl Q^j(\R_+))^*$-nondecreasing and Lipschitz. In addition, $H_\dagger^j  \big|_{\mcl Q^j(\R_+)}$ is $(\mcl Q^j(\R_+))^*$-nondecreasing and locally Lipschitz. Let $u^{\dagger,j} \in \mcl V(\mcl Q^j(\R_+))$ be the unique viscosity solution of
    \begin{equation*}
        \begin{cases}
            \partial_t u^{\dagger,j} - H_\dagger^j\left(\frac{\nabla u^{\dagger,j}}{D}\right) = 0 \text{ on } (0,+\infty) \times \mathring{\mcl Q^j(\R_+)} \\
            u^{\dagger,j}(0,\cdot) = \psi^{\dagger,j}.
        \end{cases}
    \end{equation*}

    \noindent \emph{Step 1.} We show that, for every $(t,p) \in \R_+ \times( \mcl Q(\R_+) \cap L^\infty)$,
    \begin{equation*}
        f^\dagger(t,p) = \lim_{j \to +\infty} u^{\dagger,j} (t,\Lambda_j p).
    \end{equation*}

    \noindent According to Proposition~\ref{p. approximate strong solutions are approximate visco solutions} and Proposition~\ref{p. approximate strong solution vector}, there exists $c > 0$ such that for every $j \geq 1$ and $(t,x) \in \R_+ \times \mcl Q^j(\R_+)$, we have 
    \begin{equation*}
        |f^{\dagger,j}(t,x) -u^{\dagger,j}(t,x)| \leq \frac{ct}{j}.
    \end{equation*}
    Let $p \in \mcl Q(\R_+) \cap L^\infty$, according to Proposition~\ref{p. path approximation}, as $j \to +\infty$ we have $\Lambda^j \Lambda_j p \to p$ in $L^1$. By Lipschitz continuity of $f^\dagger$ we have
    \begin{equation*}
        f^\dagger(t,p) = \lim_{j \to +\infty} f^{\dagger,j}(t,\Lambda_j p) = \lim_{j \to +\infty} u^{\dagger,j} (t,\Lambda_j p).
    \end{equation*}

    \noindent \emph{Step 2.} We show that, for every $(t,p) \in \R_+ \times( \mcl Q(\R_+) \cap L^\infty)$, 
    \begin{equation*}
        \lim_{j \to +\infty} u^{\dagger,j} (t,\Lambda_j p) = u^\dagger(t,p).
    \end{equation*}
    \noindent According to \cite[Theorem~1.2~(2)~(d)]{chen2023viscosity}, $u^{\dagger,j}$ admits the Hopf-Lax representation. That is, for every $(t,x) \in \R_+ \times \mcl Q^j(\R_+)$,
    \begin{equation*}
        u^{\dagger,j}(t,x) = \sup_{y \in \mcl Q^j(\R_+)} \inf_{z \in \mcl Q^j(\R_+)} \left\{\psi^{\dagger,j}(x+y) - \langle y , z \rangle_j + t \int \xi_\dagger \left(\frac{\Lambda^j z}{D}\right) \right\}.
    \end{equation*}
    Similarly, from \cite[Theorem~4.6]{chen2022hamilton}, we know that for every $(t,p) \in \R_+ \times \mcl (Q(\R_+) \cap L^2)$ we have
    \begin{equation*}
        u^{\dagger}(t,p) = \sup_{r \in \mcl Q(\R_+) \cap L^\infty} \inf_{s \in \mcl Q(\R_+) \cap L^\infty} \left\{\psi^{\dagger}(p+r) - \langle r , s \rangle_{L^2} + t \int \xi_\dagger \left(\frac{s}{D}\right) \right\}.
    \end{equation*}
    \noindent \emph{Step 2.1}  We show that, $\lim_{j \to +\infty} u^{\dagger,j} (t,\Lambda_j q) \leq u^\dagger(t,q)$.

    \noindent Observe that $\mcl Q^j(\R_+) = \{ \Lambda_j p, p \in \mcl Q(\R_+) \cap L^\infty \}$. So,
    \begin{equation*}
    \begin{split}
        u^{\dagger,j}(t,\Lambda_j p) = \sup_{r \in \mcl Q(\R_+) \cap L^\infty} \inf_{s \in \mcl Q(\R_+) \cap L^\infty} \biggl\{\psi^{\dagger}(\Lambda^j \Lambda_j p + \Lambda^j \Lambda_j r) \\ - \langle\Lambda_j r, \Lambda_j s \rangle_j + t \int \xi (\Lambda^j \Lambda_j s) \biggl\}.        
    \end{split}
    \end{equation*}
    Since, $\langle\Lambda_j r, \Lambda_j s \rangle_j = \langle \Lambda^j\Lambda_j r,  s \rangle_{L^2}$ and $\{ \Lambda^j\Lambda_j r, r \in \mcl Q(\R_+) \cap L^\infty\} \subset \mcl Q(\R_+) \cap L^\infty$. We have
    \begin{equation*}
    \begin{split}
        u^{\dagger,j}(t,\Lambda_j p) \leq \sup_{r \in \mcl Q(\R_+) \cap L^\infty} \inf_{s \in \mcl Q(\R_+) \cap L^\infty} \bigg\{\psi^{\dagger}(\Lambda^j \Lambda_j p + r) \\ - \langle r, s \rangle_{L^2} + t \int \xi_\dagger \left(\frac{\Lambda^j \Lambda_j s}{D}\right) \bigg\}.       
    \end{split}
    \end{equation*}
    Finally, according to Proposition~\ref{p. apprxoimation decrease H}, we have $\int \xi_\dagger (\Lambda^j \Lambda_j s/D) \leq \int \xi_\dagger(s/D)$, so 
    \begin{equation*}
    \begin{split}
        u^{\dagger,j}(t,\Lambda_j p) \leq \sup_{p \in \mcl Q(\R_+) \cap L^\infty} \inf_{r \in \mcl Q(\R_+) \cap L^\infty} \bigg\{\psi^{\dagger}(\Lambda^j \Lambda_j p + r) \\ - \langle r, s \rangle_{L^2} + t \int \xi_\dagger \left( \frac{s}{D}\right) \bigg\}.       
    \end{split}
    \end{equation*}
   Using the Lipschitz continuity of $\psi^\dagger$, we discover that
    \begin{equation*}
    \begin{split}
          u^{\dagger,j}(t,\Lambda_j p) \leq &|\Lambda^j \Lambda_j p - p|_1 \\ &+ \sup_{p \in \mcl Q(\R_+) \cap L^\infty} \inf_{r \in \mcl Q(\R_+) \cap L^\infty} \biggl\{\psi^{\dagger}(p + r) - \langle r, s \rangle_{L^2} + t \int \xi_\dagger \left( \frac{s}{D}\right) \biggl\}.       
    \end{split}
    \end{equation*}
    Using Proposition~\ref{p. path approximation}, we finally obtain $\lim_{j \to +\infty} u^{\dagger,j} (t,\Lambda_j p) \leq u^\dagger(t,p)$.

    \noindent \emph{Step 2.2}  We show that, $\lim_{j \to +\infty} u^{\dagger,j} (t,\Lambda_j p) \geq u^\dagger(t,p)$.

    For every $\varepsilon > 0$, there exists $r \in \mcl Q(\R_+) \cap L^\infty$, such that 
    \begin{align*}
        u^\dagger(t,p) \leq \varepsilon + \psi^\dagger(p+r) + \inf_{s \in \mcl Q(\R_+) \cap L^\infty} \left\{ - \langle r,s \rangle_{L^2} +t \int \xi_\dagger \left( \frac{s}{D}\right) \right\}.
    \end{align*}
    We have $\{ \Lambda^j z, z \in \mcl Q^j(\R_+) \} \subset \mcl Q(\R_+) \cap L^\infty$. So,
    \begin{align*}
        u^\dagger(t,p) \leq \varepsilon + \psi^\dagger(p+r) + \inf_{z \in \mcl Q^j(\R_+)} \left\{ - \langle r,\Lambda^j z \rangle_{L^2} +t \int \xi_\dagger \left(\frac{\Lambda^j z}{D}\right) \right\}.
    \end{align*}
    Using the Lipschitz continuity of $\psi^\dagger$, we have $\psi^\dagger(p+r) \leq \psi^{\dagger,j}( \Lambda_jp +\Lambda_j r) + |\Lambda^j\Lambda_j (p+r) - (p+r)|_{L^1}$. And we have $\langle r,\Lambda^j z \rangle_{L^2} = \langle \Lambda_j r, z \rangle_j$. So,
    \begin{align*}
        u^\dagger(t,p) \leq \varepsilon &+ |\Lambda^j\Lambda_j (p+r) - (p+r)|_{L^1} + \psi^{\dagger,j}( \Lambda_jp +\Lambda_j r) \\ 
        &+ \inf_{z \in \mcl Q^j(\R_+)} \left\{ - \langle \Lambda_j r, z \rangle_j +t \int \xi_\dagger \left(\frac{\Lambda^j z}{D}\right) \right\}.
    \end{align*}
    Since, $\Lambda_j r \in  \mcl Q^j(\R_+)$, obtain 
     \begin{align*}
        u^\dagger(t,p) \leq \varepsilon + |\Lambda^j\Lambda_j (p+r) - (p+r)|_{L^1}+ u^{\dagger,j}(t,\Lambda_j p).
    \end{align*}
    Using Proposition~\ref{p. path approximation} and letting $j \to +\infty$, we obtain 
    \begin{equation*}
        u^\dagger(t,p) \leq \varepsilon + \lim_{j \to +\infty} u^{\dagger,j}(t,\Lambda_j p),
    \end{equation*}
    since $\varepsilon > 0$ is arbitrary this concludes Step 2.

    \noindent \emph{Step 3.} Conclusion.

    From Step 1 and Step 2, we deduce that $f^\dagger$ and $u^\dagger$ coincide on $\R_+ \times( \mcl Q(\R_+) \cap L^\infty)$. Since $\mcl Q(\R_+) \cap L^\infty$ is dense in $\mcl Q(\R_+) \cap L^2$ with respect to $L^1$ convergence and $f^\dagger$ and $u^\dagger$ are both Lipschitz continuous with respect to $L^1$, we conclude that $f^\dagger$ and $u^\dagger$ coincide on $\R_+ \times( \mcl Q(\R_+) \cap L^2)$.
    \end{proof}

\section{Uniqueness of the optimizer for diagonal models} \label{s.uniqueness}

Let $\mcl P^\upa(\R^D_+)$ denote the set of probability measures on $\R^D_+$ of the form $\text{Law}(q(U))$ with $q \in \mcl Q(\R^D_+)$ and $U$ a uniform random variable on $[0,1)$. The map $q \mapsto \text{Law}(q(U))$ is an isometric bijection from $\mcl Q(\R^D_+)$ to $\mcl P^\upa(\R^D_+)$. Therefore, we can also think of the Parisi functional 
\begin{equation*}
    q \mapsto \psi(q) - t \int_0^1 \xi^*\left( \frac{q}{t}\right),
\end{equation*}
as a functional depending on $\mu \in \mcl P^\upa(\R^D_+)$ rather than $q \in \mcl Q(\R^D_+)$. In a sense, replacing $\mcl Q(\R^D_+)$ by $\mcl P^\upa(\R^D_+)$ changes the geometry. Given $\mu_0,\mu_1 \in \mcl P^\upa(\R^D_+)$ and $q_0,q_1 \in \mcl Q(\R^D_+)$ such that $\mu_i= \text{Law}(q_i(U))$, in general, for $\lambda \in (0,1)$ we have 
\begin{equation*}
    \lambda \mu_1 + (1-\lambda)\mu_0 \neq \text{Law}(\lambda q_1(U)+(1-\lambda)q_0(U)).
\end{equation*}
When $D = 1$, this allows us to reveal a hidden concavity property of the Parisi functional, according to \cite[Theorem~2]{auffinger2015parisi}, the Parisi functional is strictly concave on $\mcl P^\upa(\R_+) = \mcl P(\R_+)$. This is the key property used to establish the uniqueness of Parisi measures for models with $D = 1$ \cite[Corollary~1]{auffinger2015parisi}.

In the setting, $D > 1$ this approach seems to fail. A difficulty is that the convexity of the set $\mcl P^\upa(\R_+)$ is an exception rather than the rule. For every $D > 1$, the set $\mcl P^\upa(\R^D_+)$ is not convex, since for example, $\frac{\delta_{(0,1)}+ \delta_{(1,0)}}{2} \notin \mcl P^\upa(\R^2_+)$. This is problematic, as given two possible maximizing probability measures $\mu_0 \neq \mu_1$ in $\mcl P^\upa(\R^D_+)$, the Parisi functional may not even be defined at $\frac{\mu_0 + \mu_1}{2}$.

In \cite[Theorem~1.2]{chen2023parisipdevector}, it was pointed out that, that the restriction of the Parisi functional to any “one-dimensional subspace of $\mcl P^\upa(\R^D_+)$" is strictly concave. Thanks to Theorem~\ref{t.symmetric optimizer vector}, the Parisi formula can be written as a supremum over probability measures in $\mcl P^\upa(\R^D_+)$ supported on $\{(\lambda,\dots,\lambda),\lambda \geq 0 \}$. This space is of course a “one-dimensional subspace” in the sense of \cite[Theorem~1.2]{chen2023parisipdevector}, this allows us to prove uniqueness of Parisi measures.


Let $\mcl P_1(\R_+)$ denote the set of Borel probability measures on $\R_+$ with finite first moment. Given $\mu \in \mcl P_1(\R_+)$, for every $u \in [0,1)$, we define 
\begin{equation*}
    p_\mu(u) = \inf \{ x \geq 0 \big| \; \mu([0,x]) > u \}.
\end{equation*}
The path $p_\mu \in \mcl Q(\R_+) \cap L^1$ is the quantile function of the probability measure $\mu$ and classically, $\text{Law}(p_\mu(U)) = \mu$.

\begin{proposition}[\cite{chen2023parisipdevector}] \label{p.strict concavity}
    The map $\mu \mapsto \psi^\dagger(p_\mu)$ is strictly concave on $\mcl P_1(\R_+)$.
\end{proposition}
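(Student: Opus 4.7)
The plan is to mimic the strict concavity argument of Auffinger-Chen \cite{auffinger2015parisi}, adapted to vector spins as in \cite{chen2023parisipdevector}. Although the reference spin takes values in $\R^D$, the restriction of the Parisi functional to paths of the form $\text{diag}(p_\mu,\dots,p_\mu)$ with $p_\mu \in \mcl Q(\R_+)$ depends on the scalar probability measure $\mu \in \mcl P_1(\R_+)$ alone, so the problem is effectively ``one-dimensional'' in the path variable and the classical approach applies.

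First, I would give a Parisi-type PDE representation of $\psi^\dagger(p_\mu)$. Starting from the cascade formula \eqref{e. enriched fe} at $N = 1$ and $t = 0$, the standard recursive Gaussian computation along a Poisson-Dirichlet cascade yields
\begin{equation*}
    \psi^\dagger(p_\mu) = -\Phi_\mu(0,0) + L(\mu),
\end{equation*}
where $L$ is affine in $\mu$ and $\Phi_\mu : [0,M] \times \R^D \to \R$ (with $M$ the essential supremum of $\mu$) solves a Parisi parabolic equation
\begin{equation*}
    \partial_t \Phi_\mu + \tfrac{1}{2}\Delta_x \Phi_\mu + \tfrac{1}{2} \alpha_\mu(t) |\nabla_x \Phi_\mu|^2 = 0, \qquad \Phi_\mu(M,x) = \log \int_{\R^D} e^{\sqrt{2}\, x \cdot \sigma}\,\d P_1(\sigma),
\end{equation*}
whose only dependence on $\mu$ is through its cumulative distribution function $\alpha_\mu(t) = \mu([0,t])$. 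Since $L$ is affine, strict concavity of $\mu \mapsto \psi^\dagger(p_\mu)$ reduces to strict concavity of $\mu \mapsto -\Phi_\mu(0,0)$.

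Next, I would apply the Hopf-Cole transform, or equivalently the Bou\'e-Dupuis variational formula, to express $-\Phi_\mu(0,0)$ as a supremum over $\R^D$-valued drift controls $v$ of a quadratic cost functional that is affine in $\mu$. This displays $\mu \mapsto -\Phi_\mu(0,0)$ as a supremum of affine functionals of $\mu$, hence as a concave function. Strict concavity is then deduced from the uniqueness of the optimal control $v_\mu^{\star}$ and from the fact that, for distinct $\mu_0 \ne \mu_1$, the corresponding optimizers disagree on a set of positive Lebesgue measure, forcing the second variation along the direction $\mu_1 - \mu_0$ to be strictly negative.

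The main obstacle is precisely this last step: uniqueness and non-degeneracy of the optimal control in the $D$-dimensional Brownian setting, which is needed to upgrade concavity to strict concavity. This is the content of \cite[Theorem~1.2]{chen2023parisipdevector}: the one-parameter family $\{\hat \mu : \mu \in \mcl P_1(\R_+)\}$ with $\hat \mu = \text{Law}\bigl((p_\mu(U),\dots,p_\mu(U))\bigr)$ forms a one-dimensional subspace of $\mcl P^\upa(\R^D_+)$ in the sense of that reference, and invoking their strict concavity theorem along this subspace concludes the proof.
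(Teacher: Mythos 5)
Your proposal is correct and uses essentially the same strategy as the paper: both reduce the statement to \cite[Theorem~1.2]{chen2023parisipdevector} applied along the one-parameter family $p \mapsto (p,\dots,p)$. Your sketch of the Parisi PDE and Bou\'e--Dupuis representation is expository narration of what that cited theorem establishes internally; the paper instead spends its short proof on the translation step (reconciling the path-space conventions $\Pi$ versus $\mcl Q$, and noting that $\psi$ is $\xi$-independent so one may take $\tilde\xi(R)=\tfrac12\sum_{d,d'}R_{dd'}^2$ with $\nabla\tilde\xi=\mathrm{id}$, turning $\psi(\nabla\tilde\xi\circ\Psi\circ p_\mu)$ into $\psi^\dagger(p_\mu)$).
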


\begin{proof}
    First, note that in \cite{chen2023parisipdevector} the Parisi functional depends on paths in,
    \begin{equation*}
        \Pi(S^D_+) = \{ \pi : [0,1] \to S^D_+ \big| \; \pi \text{ is left-continuous and nondecreasing}\}.
    \end{equation*}
    The Parisi functional is continuous with respect to $|\cdot|_{L^1}$, so one can replace paths $p \in \mcl Q(\R_+) \cap L^\infty$ with their left continuous version without affecting the value of the Parisi functional. Hence, the distinction between $\Pi$ and $\mcl Q$ is simply a matter of taste. Let $\tilde \xi : \R^{D \times D} \to \R$ be a convex function satisfying \eqref{e. covariance} for some Gaussian process $\tilde H_N$. Presumably, as a consequence of \cite[Theorem~1.2]{chen2023parisipdevector}, choosing $\Psi = s \text{id}_D$, we only have that the function $\mu \mapsto \psi(\nabla \tilde \xi \circ \Psi \circ p_\mu)$ is strictly concave on $\mcl P_1(\R_+)$. But, $\psi$ is independent of the choice of $\tilde \xi$, and with $\tilde \xi(R) = \frac{1}{2} \sum_{d,d'=1}^D R^2_{dd'}$ we obtain that $\mu \mapsto \psi^\dagger(p_\mu)$ is strictly concave on $\mcl P_1(\R_+)$.
\end{proof}

\begin{proof}[Proof of Theorem~\ref{t.symmetric optimizer vector}] The variational formula \eqref{e.symmetric optimizer vector} follows from Theorem~\ref{t.symmetric HJ vector}, we now prove existence and uniqueness of an optimizer in \eqref{e.symmetric optimizer vector}. In the first three steps of the proof, we explain that in \eqref{e.symmetric optimizer vector}, the supremum can be taken over a subset of paths bounded in $L^\infty$, this yields existence of an optimizer. The last step is dedicated to showing uniqueness of the optimizer through Proposition~\ref{p.strict concavity}.

    \noindent Step 1. We show that there exists $c \geq 0$, such that for every $\lambda > \lambda' \geq c$ we have 
    \begin{equation*}
        \frac{\xi_\dagger^*(\lambda) - \xi_\dagger^*(\lambda')}{\lambda - \lambda'} \geq 1.
    \end{equation*}
    \noindent Let $a > 0$, there exists $b \geq 0$ such that $\xi_\dagger(\lambda) \leq b$ on $[0,a]$. Let $I_{[0,a]}$ denote the convex function taking the value $0$ on $[0,a]$ and $+\infty$ otherwise. We have $\xi_\dagger \leq b + I_{[0,a]}$ on $\R_+$. Thus, for $\lambda \geq 0$,
    \begin{align*}
        \xi^*_\dagger(\lambda) &= \sup_{\lambda' \geq 0} \left\{ \lambda \lambda ' - \xi_\dagger(\lambda) \right\} \\
                              &\geq  \sup_{\lambda' \geq 0} \left\{ \lambda \lambda ' - b -  I_{[0,a]}\right\} \\
                               &= \sup_{\lambda' \in [0,a]} \left\{ \lambda \lambda ' - b \right\} \\
                               &= a \lambda - b.
    \end{align*}
    This imposes $\liminf_{\lambda \to +\infty} \frac{\xi_\dagger^*(\lambda)}{\lambda} \geq a$. Since $a > 0$ is arbitrary, we have 
    \begin{equation*}
        \liminf_{\lambda \to +\infty} \frac{\xi_\dagger^*(\lambda)}{\lambda} = +\infty.
    \end{equation*}
    Let $c \geq 0$ be so that for every $\lambda \geq c$,
    \begin{equation*}
        \frac{\xi_\dagger^*(\lambda) - \xi_\dagger^*(0)}{\lambda} \geq 1.
    \end{equation*}
    Then, since $\xi_\dagger^*$ is convex, its slope increases and for every $\lambda > \lambda' \geq c$, we have 
    \begin{equation*}
       \frac{\xi_\dagger^*(\lambda) - \xi_\dagger^*(\lambda')}{\lambda-\lambda'} \geq  \frac{\xi_\dagger^*(\lambda) - \xi_\dagger^*(0)}{\lambda} \geq 1.
    \end{equation*}
    This concludes Step 1.

    \noindent Step 2. Let $L^\infty_{\leq tc}$ denote the ball of center $0$ and radius $tc$ in $L^\infty$. We show that
    \begin{equation*}
        \sup_{p \in \mcl Q(\R_+) \cap L^\infty} \left\{ \psi^\dagger(p) - \int \xi^*_\dagger \left( \frac{p}{t}\right) \right\} = \sup_{p \in \mcl Q(\R_+) \cap L^\infty_{\leq tc}} \left\{ \psi^\dagger(p) - \int \xi^*_\dagger \left( \frac{p}{t}\right) \right\}.
    \end{equation*}

    \noindent Let LHS and RHS denote the left-hand side and right-hand side in the previous display. It is clear that LHS $\geq$ RHS, we only prove the other inequality. Let $p \in \mcl Q(\R_+) \cap L^\infty$, and let $\tilde p = p \wedge (t \bar c)$ be the path which coincides with $p$ until $u^* = \inf \{ u \in [0,1), p(u) > tc \}$ and is constant $=tc$ on $[u^*,1)$. We have
    \begin{align*}
        &\left( \psi^\dagger(p) - t \int_0^1 \xi_\dagger^* \left( \frac{p}{t} \right)\right) -  \left( \psi^\dagger(\tilde p) - t \int_0^1 \xi_\dagger^* \left( \frac{\tilde p}{t} \right) \right) \\
        &\leq \int_0^1 |p-\tilde p| - t \int_0^1 \left( \xi^*_\dagger \left( \frac{p}{t} \right) - \xi^*_\dagger \left( \frac{\tilde p}{t} \right) \right) \\
                                                            &= \int_{u^*}^1 p - tc - t \left(  \xi^*_\dagger \left(  \frac{p}{t} \right) - \xi^*_\dagger \left( c \right) \right) \\
                                                            &= \int_{u^*}^1 (p -tc) \left(1 - \frac{ \xi^*_\dagger \left( \frac{p}{t} \right) - \xi^*_\dagger \left( c \right)}{\frac{p}{t} - c} \right) \\
                                                            &\leq 0.
    \end{align*}
    Since $|\tilde p|_{L^\infty} \leq tc$, this concludes Step 2.


    \noindent Step 3. We show that the sup in the variational formula of Theorem~\ref{t.symmetric optimizer vector} is reached at some $p^* \in \mcl Q(\R_+) \cap L^\infty$.

    \noindent The functional $p \mapsto \psi^\dagger(p) - t \int_0^1 \xi^*_\dagger \left( \frac{p}{t}\right)$ is continuous with respect to the $L^1$-norm. So, according to Step 2 it is enough to prove that the set $\mcl Q(\R_+) \cap L^\infty_{\leq tc}$ is compact with respect to $L^1$ convergence. Let $(p_n)_n$ be a sequence of paths in $\mcl Q(\R_+)$ which is bounded by $tc$ in $|\cdot|_{L^\infty}$. By a diagonal argument, up to extraction (not relabelled) we may assume that for every $u \in \Q \cap [0,1)$, we have $\lim_{n \to +\infty} p_n(u) = p(u)$ for some $p(u) \in \R_+$. The map $u \mapsto p(u)$ is nondecreasing and bounded by $tc$ on $\Q \cap [0,1)$. For every $v \in [0,1)$, the quantity $p(u)$ converges as $u \to v$ in $\Q \cap [v,1)$, we denote by $p(v)$ its limiting value. The function $p : [0,1) \to \R_+$ thus defined is càdlàg, nondecreasing and bounded by $tc$. Since $p$ is monotone, the set of discontinuities of $p$ is countable. Let $v \in [0,1)$ be a point at which $p$ is continuous, let $u \in \Q \cap [v,1)$ we have $p_n(v) \leq p_n(u)$, letting $n \to +\infty$ we obtain $\limsup_{n \to +\infty} p_n(v) \leq p(u)$ and since $p$ is continuous at $v$, letting $u \to v$ yields $\limsup_{n \to +\infty} p_n(v) \leq p(v)$. By considering $u \in \Q \cap [0,v]$, we can repeat the same argument to discover that $\liminf_{n \to +\infty} p_n(v) \geq p(v)$. In conclusion, $p_n \to p$ pointwise on $[0,1)$ outside a countable set of points. Since $|p_n|_{L^\infty} \leq tc$, by dominated convergence, it follows that $p_n \to p$ in $L^1$. 

    \noindent Step 4. We show that the sup in the variational formula of Theorem~\ref{t.symmetric optimizer vector} is reached at most at one $p^* \in \mcl Q(\R_+) \cap L^\infty$.
   
    \noindent By contradiction, assume that there are two maximizers $p_0 \neq p_1$ of \eqref{e.symmetric optimizer vector} in $\mcl Q(\R_+) \cap L^\infty$. Let $\mu_i \in \mcl P(\R_+)$ be the law of the random variable $p_i(U)$ where $U$ is a uniform random variable in $[0,1)$. Let $p = p_\mu \in \mcl Q(\R_+) \cap L^\infty$ denote the quantile function of $\mu = \frac{\mu_0+\mu_1}{2} \in \mcl P(\R_+)$. According to Proposition~\ref{p.strict concavity}, we have
    \begin{equation*}
        \psi^\dagger(p) > \frac{\psi^\dagger(p_0)+\psi^\dagger(p_1)}{2}.
    \end{equation*}
    By definition 
    \begin{equation*}
        \int \xi_\dagger^* \left( \frac{p}{t}\right) = \frac{1}{2}\int \xi_\dagger^*\left( \frac{p_0}{t}\right) + \frac{1}{2}\int \xi_\dagger^*\left( \frac{p_1}{t}\right).
    \end{equation*}
    Therefore,
    \begin{equation*}
         \psi^\dagger(p) - t\int \xi^*_\dagger \left( \frac{p}{t}\right) > \frac{1}{2} \left(\psi^\dagger(p_0) - t\int \xi^*_\dagger \left( \frac{p_0}{t}\right) \right) + \frac{1}{2} \left(\psi^\dagger(p_1) - t\int \xi^*_\dagger \left( \frac{p_1}{t}\right) \right).
    \end{equation*}
    This is a contradiction since in the previous display the left-hand side is upper-bounded by $\lim_{N \to +\infty} \bar F_N(t)$ and the right-hand side is equal to  $\lim_{N \to +\infty} \bar F_N(t)$.    
    


\end{proof}

\section{Upper-bound for nonconvex models} \label{s.upper bound}

In this section, we assume that the interaction function $\xi$ only depends on the diagonal coefficients of its argument, and we do \emph{not} assume that $\xi$ is convex. We give a proof of Theorem~\ref{t. upper bound}. We believe that this Theorem is related to the approach developed in \cite{mourrat2020free,mourrat2020nonconvex} where the limit free energy of nonconvex models is lower bounded in terms of the value at $(t,0)$ of the viscosity solution of some Hamilton-Jacobi equation. Theorem~\ref{t. upper bound} will follow from a classic interpolation argument after observing that $\xi$ satisfies the following inequality
\begin{equation*}
    \forall x \in \R^D_+, \; \xi(x) \leq \frac{1}{D} \sum_{d =1}^D \xi(x_d,\dots,x_d).
\end{equation*}

\subsection{An inequality for permutation-invariant covariance functions} \label{ss.inequality}

\begin{proposition} \label{p. inequality monomials}
    Let $i_1,\dots,i_D \in \mathbb{N}$ and $I = \sum_{d = 1}^D i_d$, for every $x_1,\dots,x_D \in \R_+$, we have 
    \begin{equation} \label{e.inequality monomials}
        \frac{1}{D!} \sum_{s \in \msc S_D} \prod_{d = 1}^D x_{s(d)}^{i_d} \leq \frac{1}{D} \sum_{d = 1}^D x^I_d.
    \end{equation}
    In addition, if $I$ is even then \eqref{e.inequality monomials} holds for $x_1,\dots,x_D \in \R$.
 \end{proposition}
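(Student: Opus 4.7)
The plan is to prove this by applying the weighted arithmetic-geometric mean inequality to each of the $D!$ summands individually, then exploiting the uniform distribution of $s(d)$ as $s$ ranges over $\msc S_D$.

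First, assume $x_1,\dots,x_D \geq 0$. I will rewrite each summand using the identity $x_{s(d)}^{i_d} = (x_{s(d)}^I)^{i_d/I}$, which makes sense because $x_{s(d)} \geq 0$, and then apply weighted AM-GM with weights $i_d/I \geq 0$ summing to $1$:
\begin{equation*}
    \prod_{d=1}^D x_{s(d)}^{i_d} = \prod_{d=1}^D (x_{s(d)}^I)^{i_d/I} \leq \sum_{d=1}^D \frac{i_d}{I} x_{s(d)}^I.
\end{equation*}
Then I sum over $s \in \msc S_D$. For each fixed $d$, as $s$ ranges over $\msc S_D$, the value $s(d)$ takes each element of $\{1,\dots,D\}$ exactly $(D-1)!$ times, so
\begin{equation*}
    \frac{1}{D!}\sum_{s \in \msc S_D} x_{s(d)}^I = \frac{1}{D}\sum_{d'=1}^D x_{d'}^I.
\end{equation*}
Combining these two displays yields
\begin{equation*}
    \frac{1}{D!}\sum_{s \in \msc S_D} \prod_{d=1}^D x_{s(d)}^{i_d} \leq \sum_{d=1}^D \frac{i_d}{I} \cdot \frac{1}{D}\sum_{d'=1}^D x_{d'}^I = \frac{1}{D}\sum_{d'=1}^D x_{d'}^I,
\end{equation*}
since $\sum_d i_d/I = 1$.

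For the signed case, if $I$ is even, I will reduce to the nonnegative case: since $|x_{s(d)}^{i_d}| = |x_{s(d)}|^{i_d}$,
\begin{equation*}
    \prod_{d=1}^D x_{s(d)}^{i_d} \leq \prod_{d=1}^D |x_{s(d)}|^{i_d},
\end{equation*}
and then applying the nonnegative case to $(|x_1|,\dots,|x_D|)$ together with $|x_d|^I = x_d^I$ (because $I$ is even) gives the claimed bound.

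I do not anticipate any real obstacle; the only subtlety is writing $x_{s(d)}^{i_d}$ as a fractional power so that weighted AM-GM applies cleanly, which requires $x_{s(d)} \geq 0$ and hence explains the need to treat the signed case separately via absolute values.
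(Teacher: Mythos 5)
Your proof is correct and takes essentially the same route as the paper: weighted AM-GM applied termwise, followed by equidistribution of $s(d)$ over $\{1,\dots,D\}$, and the even-$I$ case handled via absolute values. The only nit is that the weights $i_d/I$ are undefined when $I = 0$; the paper disposes of that degenerate case (where the inequality reads $1 \le 1$) in a separate sentence before invoking AM-GM.
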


\begin{proof}
    If $I = 0$ then the inequality is clear, otherwise by the inequality of arithmetic and geometric means we have 
    \begin{equation*}
        \prod_{d = 1}^D x_{s(d)}^{i_d} \leq \frac{1}{I} \sum_{\delta = 1}^D i_\delta x^I_{s(\delta)}.
    \end{equation*}
    Summing over $s \in \msc S_D$, we obtain 
    \begin{equation*} 
         \frac{1}{D!} \sum_{s \in \msc S_D} \prod_{d = 1}^D x_{s(d)}^{i_d} \leq \frac{1}{I} \sum_{\delta = 1}^D i_\delta \frac{1}{D!} \sum_{s \in \msc S_D} x^I_{s(\delta)}.
    \end{equation*}
    Observing that $\frac{1}{D!} \sum_{s \in \msc S_D} x^I_{s(\delta)} = \frac{1}{D} \sum_{d = 1}^D x^I_d$ is independent of $\delta$, we obtain \eqref{e.inequality monomials}. Assume now that $I$ is even, and let $y \in \R^D$, define $x_d = |y_d|$. Let $E$ be the set of $d \in \{1,\dots,D\}$ such that $y_d < 0$. We have
    \begin{equation*}
         \prod_{d = 1}^D y_{s(d)}^{i_d} = (-1)^{\sum_{s(d) \in E} i_d}  \prod_{d = 1}^D x_{s(d)}^{i_d} \leq  \prod_{d = 1}^D x_{s(d)}^{i_d}.
    \end{equation*}
    Summing over $s \in \msc S_D$ and applying \eqref{e.inequality monomials} to $x$, we obtain 
    \begin{align*}
        \frac{1}{D!} \sum_{s \in \msc S_D} \prod_{d = 1}^D y_{s(d)}^{i_d} &\leq  \frac{1}{D!} \sum_{s \in \msc S_D}  \prod_{d = 1}^D x_{s(d)}^{i_d} \\
                                                                          &\leq  \frac{1}{D} \sum_{d = 1}^D x_{d}^I \\
                                                                          &=  \frac{1}{D} \sum_{d = 1}^D y_{d}^I.
    \end{align*}
    Where the last line follows from the fact that $x_d = \pm y_d$ and $I$ is even.
\end{proof}

Recall that we have defined for every $\lambda \in \R$, $\xi_\dagger(\lambda) = \xi(\lambda,\dots,\lambda)$ and we say that $\xi$ is permutation-invariant when for every $s \in \msc
S_D$ and $x \in \R^D$, $\xi(x^s) = \xi(x)$. Let $A \in \R^{D \times D}$ be a $D \times D$ matrix and $p \in \N^*$, we denote 
\begin{equation*}
    A^{\otimes p} = (A_{i_1,j_1} \times \dots \times A_{i_p,j_p})_{1 \leq i_1,\dots, i_p, j_1,\dots, j_p \leq D} \in \R^{D^p \times D^p}
\end{equation*}
the $p$-fold tensor product of $A$ with itself. For every $C,C' \in \R^{D^p \times D^p}$ we define 
\begin{equation*}
    C \cdot C' = \sum_{1 \leq i_1,\dots, i_p, j_1,\dots, j_p \leq D } C_{(i_1,j_1), \dots, (i_p,j_p)} C'_{(i_1,j_1), \dots, (i_p,j_p)}.
\end{equation*}

\begin{proposition} \label{p.inequality xi}
    Let $\xi : \R^D \to \R$, such that $\xi$ is permutation-invariant and admits an absolutely convergent power series. Assume that for every, $N \geq 1$ there exists a Gaussian process $(H_N(\sigma))_{\sigma \in \R^{D \times N}}$ such that for every $\sigma, \tau \in \R^{D \times N}$,
    \begin{equation*}
        \E \left[ H_N(\sigma) H_N(\tau)\right] = N \xi \left( \frac{\sigma_1 \cdot \tau_1}{N}, \dots, \frac{\sigma_D \cdot \tau_D}{N} \right).
    \end{equation*}
    Then, $\xi_\dagger$ is convex on $\R_+$ and for every $x \in \R^D_+$, we have 
    \begin{equation} \label{e.inequality xi}
        \xi(x) \leq \frac{1}{D} \sum_{d = 1}^D \xi_\dagger(x_d).    
    \end{equation}
\end{proposition}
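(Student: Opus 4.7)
The plan is to reduce both conclusions to a single positivity claim: every coefficient of the power series expansion of $\xi$ is non-negative. Once this is established, the convexity of $\xi_\dagger$ on $\R_+$ becomes automatic, since $\xi_\dagger(\lambda) = \sum_{i_1,\dots,i_D} a_{i_1,\dots,i_D} \lambda^{I}$ (with $I = i_1 + \cdots + i_D$) is then a non-negative combination of the convex functions $\lambda \mapsto \lambda^I$ on $\R_+$; and the main inequality reduces to applying Proposition~\ref{p. inequality monomials} monomial by monomial.

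Write
\begin{equation*}
    \xi(x) = \sum_{i_1,\dots,i_D \geq 0} a_{i_1,\dots,i_D} \prod_{d=1}^D x_d^{i_d}.
\end{equation*}
The permutation invariance of $\xi$, combined with the uniqueness of power series coefficients under absolute convergence, yields $a_{i_{s(1)},\dots,i_{s(D)}} = a_{i_1,\dots,i_D}$ for every $s \in \msc S_D$. The main claim is that $a_{i_1,\dots,i_D} \geq 0$ for every multi-index. To prove it, I specialize to $N = 1$: by hypothesis there is a Gaussian process $(H_1(\sigma))_{\sigma \in \R^D}$ with covariance
\begin{equation*}
    \E[H_1(\sigma) H_1(\tau)] = \xi(\sigma_1\tau_1,\dots,\sigma_D\tau_D),
\end{equation*}
which is real-analytic near the origin. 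Fix a multi-index $(i_1,\dots,i_D)$ with $I = i_1+\cdots+i_D$, and let $\Delta_h^{(i_1,\dots,i_D)}$ denote the iterated forward-difference operator with step $h > 0$ of order $i_d$ in the $d$-th coordinate. The random variable $Z_h := \bigl(\Delta_h^{(i_1,\dots,i_D)} H_1\bigr)(0)$ is a finite linear combination of the $H_1(\sigma)$, hence Gaussian and satisfies $\E[Z_h^2] \geq 0$. Expanding the covariance in its power series and using that $\sum_{j=0}^{i}\binom{i}{j}(-1)^{i-j} j^{k}$ vanishes for $k < i$ and equals $i!$ for $k = i$, only the monomial with exponents $(i_1,\dots,i_D)$ contributes to the leading order as $h \downarrow 0$, and one obtains
\begin{equation*}
    h^{-2I}\, \E[Z_h^2] \; \longrightarrow \; a_{i_1,\dots,i_D} \prod_{d=1}^D (i_d!)^2 \quad \text{as } h \downarrow 0.
\end{equation*}
Passing to the limit yields $a_{i_1,\dots,i_D} \geq 0$.

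With non-negativity of the coefficients in hand, the convexity of $\xi_\dagger$ on $\R_+$ follows at once. For the main inequality, use permutation invariance to symmetrize and apply Proposition~\ref{p. inequality monomials} term by term:
\begin{align*}
    \xi(x) \;=\; \frac{1}{D!}\sum_{s \in \msc S_D} \xi(x^s) &= \sum_{i_1,\dots,i_D} a_{i_1,\dots,i_D} \cdot \frac{1}{D!}\sum_{s \in \msc S_D} \prod_{d=1}^D x_{s(d)}^{i_d} \\
    &\leq \sum_{i_1,\dots,i_D} a_{i_1,\dots,i_D} \cdot \frac{1}{D}\sum_{d=1}^D x_d^I \;=\; \frac{1}{D}\sum_{d=1}^D \xi_\dagger(x_d),
\end{align*}
where the inequality uses $x \in \R^D_+$ together with $a_{i_1,\dots,i_D} \geq 0$.

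The main obstacle is the positivity step. Justifying the finite-difference-to-derivative passage and the interchange of the limit with the power series expansion is legitimate thanks to the absolute convergence of the series on a neighborhood of the origin, but it must be carried out with care; an alternative route is to test the positive semi-definite kernel $(\sigma,\tau) \mapsto \xi(\sigma_1\tau_1,\dots,\sigma_D\tau_D)$ directly against explicit signed measures chosen so that their moments approximately isolate one monomial at a time (for instance, Hermite polynomials against a standard Gaussian reference measure), which would give the same conclusion by the linear independence of the monomials.
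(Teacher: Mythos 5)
Your argument is correct and arrives at the same pivotal fact as the paper --- that every coefficient $a_{i_1,\dots,i_D}$ of the power series of $\xi$ is nonnegative --- but by a genuinely different route. The paper's proof invokes \cite[Proposition~6.6]{mourrat2020free} to decompose the covariance $\overline{\xi}(A) = \xi(A_{11},\dots,A_{DD})$ as $\sum_p C^{(p)} \cdot A^{\otimes p}$ with each $C^{(p)} \in S^{D^p}_+$, and then reads off the nonnegativity of the power-series coefficients from the (nonnegative) diagonal entries of the $C^{(p)}$. You instead work entirely at $N = 1$, where the kernel $(\sigma,\tau) \mapsto \xi(\sigma_1\tau_1,\dots,\sigma_D\tau_D)$ on $\R^D$ is positive semi-definite, and extract $a_{i_1,\dots,i_D}$ as the leading-order coefficient of $\E[Z_h^2] \geq 0$ for an explicit finite-difference functional $Z_h$ of the Gaussian process. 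This is essentially the Schoenberg-type observation that a power series generating a positive-semi-definite kernel must have nonnegative coefficients, made self-contained and elementary; the paper's route is shorter on the page but offloads the positive semi-definiteness analysis to the cited decomposition result. Once nonnegativity is in hand, both proofs conclude identically by symmetrizing over $\msc S_D$ and invoking Proposition~\ref{p. inequality monomials} term by term, and both obtain the convexity of $\xi_\dagger$ on $\R_+$ from the nonnegativity of the univariate coefficients. The interchange you flag does go through: writing $\E[Z_h^2] = \sum_{i'} a_{i'} h^{2I'} \bigl(\prod_d \Delta_d(i'_d)\bigr)^2$ with $\Delta_d(k) = \sum_{j=0}^{i_d}\binom{i_d}{j}(-1)^{i_d-j}j^{k}$, the only multi-index with $I' = I$ and all $\Delta_d(i'_d) \neq 0$ is $i' = i$, and for $h$ with $h^2(\max_d i_d)^2$ below the radius of absolute convergence the tail $I' > I$ of $h^{-2I}\E[Z_h^2]$ is $O(h^2)$ by comparison with the convergent series $\sum |a_{i'}| r^{I'}$, hence vanishes as $h \downarrow 0$.
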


\begin{remark} \label{r.counterexample}
    For some $\xi$, \eqref{e.inequality xi} is not satisfied on $\R^D$. For example, assume that $D = 2$ and consider $\xi(x_1,x_2) = x_1x_2(x_1+x_2)$, we have $\xi_\dagger(\lambda) = 2\lambda^3$, and
    \begin{equation*}
       \xi(-2,1) =2 > -7 = \frac{\xi_\dagger(-2) + \xi_\dagger(1)}{2}.
    \end{equation*}
\end{remark}

\begin{proof}
    Given $\sigma,\tau \in \R^{D \times N}$, we consider their overlap matrix 
    \begin{equation*}
        \frac{\sigma \tau^*}{N} = \left( \frac{\sigma_d \cdot \tau_{d'}}{N}\right)_{1 \leq d,d' \leq D},
    \end{equation*}
    the diagonal of the overlap matrix $\frac{\sigma \tau^*}{N}$ is the overlap vector
    \begin{equation*}
        R(\sigma,\tau) = \left( \frac{\sigma_1 \cdot \tau_1}{N}, \dots,  \frac{\sigma_D \cdot \tau_D}{N}\right).
    \end{equation*}
    For every $A \in \R^{D \times D}$, we define 
    \begin{equation*}
        \overline{\xi}(A) = \xi(A_{11},\dots,A_{dd}).
    \end{equation*}
    We have $\overline{\xi}(\frac{\sigma \tau^*}{N}) = \xi \left( \frac{\sigma_1 \cdot \tau_1}{N}, \dots, \frac{\sigma_D \cdot \tau_D}{N}\right)$. According to \cite[Poposition~6.6]{mourrat2020free}, there exists a sequence of matrices $(C^{(p)})_{p \geq 1}$ such that $C^{(p)} \in S^{D^p}_+$ and,
    for every $A \in \R^{D \times D}$,
    \begin{equation*}
        \overline \xi(A) = \sum_{p \geq 1} C^{(p)} \cdot A^{\otimes p}.
    \end{equation*}
    Since $\overline \xi(A)$ only depends on the diagonal of $A$, the matrices $C^{(p)}$ must be diagonal with nonnegative coefficients. In particular, there exists nonnegative real numbers $(a_{i_1,\dots,i_D})_{i_1,\dots,i_D \geq 0}$, such that for every $x \in \R^D$,
    \begin{equation*}
        \xi(x) = \sum_{i_1,\dots,i_D \geq 0} a_{i_1,\dots,i_D} \prod_{d = 1}^D x_d^{i_d}.
    \end{equation*}
    Therefore, for every $\lambda \in \R$,
    \begin{equation*}
        \xi_\dagger(\lambda) = \sum_{I \geq 0} \lambda^I \sum_{ \substack{ i_1,\dots,i_D \geq 0 \\ \sum_{\delta= 1}^D i_\delta = I}} a_{i_1,\dots,i_D}
    \end{equation*}
    This shows that $\xi_\dagger$ has a power series expansion with nonnegative coefficients, so $\xi_\dagger$ is convex on $\R_+$. In addition, since $\xi$ is permutation-invariant, given $x \in \R^D_+$ we have 
    \begin{align*}
        \xi(x) &= \frac{1}{D!} \sum_{s \in \msc S_D} \xi(x^s) \\
               &= \sum_{i_1,\dots,i_D \geq 0} a_{i_1,\dots,i_D} \frac{1}{D!} \sum_{s \in \msc S_D} \prod_{d = 1}^D x_{s(d)}^{i_d}.
    \end{align*}
    According to Proposition~\ref{p. inequality monomials}, we thus have 
    \begin{align*}
        \xi(x) &\leq \sum_{i_1,\dots,i_D \geq 0} a_{i_1,\dots,i_D} \frac{1}{D} \sum_{d = 1}^D x_d^{\sum_{\delta = 1}^D i_\delta} \\
               &= \frac{1}{D} \sum_{d =1}^D \sum_{I \geq 0} x_d^I  \sum_{ \substack{ i_1,\dots,i_D \geq 0 \\ \sum_{\delta= 1}^D i_\delta = I}} a_{i_1,\dots,i_D} \\
               &= \frac{1}{D} \sum_{d = 1}^D \xi_\dagger(x_d).
    \end{align*}
\end{proof}

\begin{remark} \label{r.inequality xi everywhere}
    As a consequence of the proof of \eqref{e.inequality xi} we have just given and the second part of Proposition~\ref{e.inequality monomials}, if the power series of $\xi$ only has terms of even degree, then \eqref{e.inequality xi} holds for $x \in \R^D$.
\end{remark}

\subsection{Positivity principle for multiple species}

Roughly speaking, the positivity principle is the statement that if $\sigma,\tau \in \R^{D \times N}$ are independent random variables drawn from the Gibbs measure, then almost surely in the limit $N \to +\infty $ the overlaps $\frac{\sigma_d \cdot \tau_d}{N}$ are all nonnegative. When $D = 1$, it is well known that the positivity principle is verified as soon as the Gibbs measure satisfies the so-called Ghirlanda–Guerra identities \cite[Theorem~3.4]{pan}. When $D > 1$ and $P_N$ is the uniform probability measure on a product of $D$ spheres, it was shown that the statement still holds true \cite[Section~3.2]{bates2022multi}. In this section, for the convenience of the reader and the sake of completeness, we briefly explain how the arguments given to justify \cite[Lemma~3.3]{bates2022multi} can be adapted into a proof of the positivity principle for models with $D > 1$ and $P_N=P_1^{\otimes N}$. The proofs are unchanged, and we simply replace the uniform probability measure on a product of $D$ spheres by $P_1^{\otimes N}$.

Given $\sigma, \tau \in \R^{D \times N}$, we let $R(\sigma,\tau) = (\frac{\sigma_d \cdot \tau_d}{N})_{1 \leq d \leq D}$. Given $w \in [0,1]^D$, we define 
\begin{equation*}
    R^w(\sigma,\tau) = \frac{w \cdot R(\sigma,\tau) }{D} = \frac{1}{D} \sum_{d= 1}^D w_d \frac{\sigma_d \cdot \tau_d}{N}.
\end{equation*}
We let $\msc W =(w_q)_{q \geq 1}$ denote a countable dense subset of $[0,1]^D$. We assume that $\msc W$ contains $e_1\dots,e_D$ and does not contain the null vector. We fix $p \geq 1$, for every $q \geq 1$, we consider the centered Gaussian Process $H_{N,p,q}^\text{pert}$ with covariance
\begin{equation}
    \E H_{N,p,q}^\text{pert}(\sigma) H_{N,p,q}^\text{pert}(\tau)= \frac{N}{4^{p+q}} \left( R^{w_q}(\sigma,\tau) \right)^p.
\end{equation}
Such a process can be explicitly defined by setting
\begin{equation*}
    H_{N,p,q}^\text{pert}(\sigma) = \frac{2^{-(p+q)}}{N^\frac{p-1}{2}D^\frac{p}{2}} \sum_{d_1,\dots,d_p=1}^D \sum_{i_1,\dots,i_p = 1}^N J_{(d_1,\dots,d_p),(i_1,\dots,i_p)} \prod_{k = 1}^p \sqrt{w_{d_k}} \sigma_{d_k,i_k} \tau_{d_k,i_k},
\end{equation*}
Where the coefficients $J_{(d_1,\dots,d_p),(i_1,\dots,i_p)}$ are independent standard normal random variables. We fix $(u_{p,q})_{p,q\geq 1}$ a sequence of numbers in $[1,2]$ and we let
\begin{equation} \label{e.perturbation hamiltonian}
    H_N^\text{pert}(\sigma) = \sum_{p,q \geq 1} u_{p,q} H_{N,p,q}^\text{pert}(\sigma).
\end{equation}
Soon we are going to choose random coefficients $(u_{p,q})_{p,q\geq 1}$ that are independent and uniformly distributed in $[1,2]$ and independent of every other source of randomness. We denote by $\E_u$ the expectation with respect to this law. We let $c_N = N^{-\omega}$ with $0 < \omega < 1/2$, and for every nonrandom function $H : \R^{D \times N} \to \R$, we consider
\begin{equation*}
    \tilde H_N(\sigma) = H(\sigma) +c_N H_N^\text{pert}(\sigma).
\end{equation*}
First, note that by introducing the perturbation $c_N H_N^\text{pert}$ we do not change the limiting value of the free energy. Indeed, by applying Jensen's inequality twice, we obtain
\begin{equation} \label{e. unchanged fe}
   0 \leq \left( - \frac{1}{N} \E \log \int e^{H(\sigma)} \d P_N(\sigma) \right) - \left( - \frac{1}{N} \E \log \int e^{\tilde H(\sigma)} \d P_N(\sigma) \right) \leq \frac{c_N^2}{2}.
\end{equation}
We let $\tilde G_N$ denote the Gibbs measure associated to $\tilde H_N$, that is 
\begin{equation}
    d \tilde G_N(\sigma) \propto e^{\tilde H_N(\sigma)} dP_N(\sigma).
\end{equation}
We denote by $\E_{H_N^\text{pert}}$ the expectation with respect to the randomness of the Gaussian process $H_N^\text{pert}$. We denote by $\tilde G^{\otimes k}_N(A)$ the probability of the event $A$ under the probability measure $\tilde G^{\otimes k}_N$. As usual, let $\sigma,\tau$ denote two random variables sampled independently with law $\tilde G_N$. The following lemma is \cite[Lemma~3.3]{bates2022multi}.

\begin{lemma}[\cite{bates2022multi}] \label{l.positivity principle}
    For every $\varepsilon > 0$,
    \begin{equation*}
        \lim_{N \to +\infty } \sup_{H(\sigma)} \E_u \E_{H_N^\text{pert}} \tilde G_N^{\otimes 2} \left(\exists d, \; \frac{\sigma_d \cdot \tau_d}{N} \leq - \varepsilon \right) = 0,
    \end{equation*}
    where the sup is taken over all nonrandom functions $H : \R^{D \times N} \to \R$ satisfying $\int \exp |H(\sigma)| \d P_N(\sigma) < +\infty$ and $\E_u$ denote the expectation with respect to the parameters $(u_{p,q})_{p,q \geq 1}$ in \eqref{e.perturbation hamiltonian}.
\end{lemma}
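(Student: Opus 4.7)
The plan is to adapt the proof of \cite[Lemma~3.3]{bates2022multi}, where the analogous statement is established for the uniform measure on a product of $D$ spheres, to the present setting $P_N = P_1^{\otimes N}$ with $P_1$ compactly supported. Since the spherical structure is used in \cite{bates2022multi} only through a uniform bound on the spin norms, and $P_1$ being supported in the unit ball supplies the same bound here, the adaptation is essentially routine.

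First I would reduce to a single-species statement via a union bound over $d \in \{1, \dots, D\}$: it suffices to show that for each fixed $d$,
\begin{equation*}
    \lim_{N \to +\infty} \sup_H \E_u \E_{H_N^\text{pert}} \tilde G_N^{\otimes 2}\Bigl(\frac{\sigma_d \cdot \tau_d}{N} \leq -\varepsilon\Bigr) = 0.
\end{equation*}
Since $\msc W$ is assumed to contain $e_d$, fix $q_d \geq 1$ with $w_{q_d} = e_d$. The subcollection of perturbation terms $\{c_N u_{p,q_d} H_{N,p,q_d}^\text{pert}\}_{p \geq 1}$ then couples precisely to the scalar overlap $R^{e_d}(\sigma,\tau) = \frac{\sigma_d \cdot \tau_d}{DN}$, and it is this single-overlap cavity perturbation that will drive the argument.

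Next I would invoke the Panchenko--Ghirlanda--Guerra perturbation scheme \cite[Chapter~3]{pan}. Averaging over the independent uniform parameters $(u_{p,q_d})_{p \geq 1}$ concentrates $\langle (R^{e_d})^p \rangle$ around its mean with a quantitative error of order $c_N^{-1} N^{-1/2} = N^{\omega - 1/2}$, which vanishes because $\omega < 1/2$; this yields approximate Ghirlanda--Guerra identities for the law of $R^{e_d}$ under $\E_u \E_{H_N^\text{pert}} \tilde G_N^{\otimes 2}$. The relevant variance computations depend only on the bounds $|\sigma_{d,i}|, |\tau_{d,i}| \leq 1$ (from the compact support of $P_1$) and on Gaussian integration by parts, and they are uniform in the background Hamiltonian $H$, which enters only through the ambient measure $\tilde G_N$. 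The Talagrand positivity principle \cite[Theorem~3.4]{pan} is then a deterministic consequence of these identities applied to a single real-valued overlap: any limiting distribution of $R^{e_d}$ is supported in $[0,+\infty)$, which translates back to the required vanishing of the probability of $\{\sigma_d \cdot \tau_d/N \leq -\varepsilon\}$.

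The only substantive obstacle is bookkeeping: one must verify that each Gaussian interpolation step and each concentration-of-measure estimate in \cite{bates2022multi} relies solely on boundedness of the spins, not on any norm equality $|\sigma_d|^2 = N$. Because $P_1$ is supported in the unit ball, this verification is immediate, which is precisely what the author means by ``the proofs are unchanged.'' I would therefore present the argument as a careful citation: after the reduction above, invoke verbatim the perturbation and positivity-principle steps from \cite[Section~3.2]{bates2022multi}, with the sole replacement of the uniform measure on a product of spheres by $P_1^{\otimes N}$.
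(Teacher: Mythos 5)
Your proof is correct and takes essentially the same approach as the paper: reduce to a single species $d$ by union bound, derive the asymptotic Ghirlanda--Guerra identities for the scalar overlap $R^d = \sigma_d \cdot \tau_d / N$ from the perturbation Hamiltonian (the paper cites \cite[Theorem~A.3]{bates2022multi} for this step, which is the same machinery as the Panchenko perturbation scheme you invoke), and then conclude via Talagrand's positivity principle \cite[Theorem~3.4]{pan}, noting throughout that only the bound $|\sigma_{d,i}| \le 1$ coming from the compact support of $P_1$ is used and not the spherical norm identity.
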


\begin{proof}
    It suffices to show that for every $d \in \{1,\dots,D\}$,
    \begin{equation*}
        \lim_{N \to +\infty } \sup_{H(\sigma)} \E_u \E_{H_N^\text{pert}} \tilde G_N^{\otimes 2} \left(\frac{\sigma_d \cdot \tau_d}{N} \leq - \varepsilon \right) = 0.
    \end{equation*}
    Let $(\sigma^\ell)_{\ell \geq 1}$ be independent and identically distributed random variables with law $\tilde G_N$. We let $R_{\ell,\ell'} = R(\sigma^\ell,\sigma^{\ell'})$, and for every $d \in \{1,\dots, D\}$, $R^d_{\ell,\ell'} = \frac{\sigma_d^\ell \cdot \sigma_d^{\ell'}}{N}$. The result will follow from the fact that the array $(R^d_{\ell,\ell'})_{\ell,\ell' \geq 1}$ satisfies the Ghirlanda-Guerra identities in the limit $N \to +\infty$. More precisely, if we define for any $n \geq 1$,  $R(n) = (R_{\ell,\ell'})_{1 \leq \ell,\ell' \leq n}$ and for every $d \in \{1,\dots, D \}$, for every real valued bound measurable function $g  = g(R(n))$ and every continuous function, $h : \R \to \R$,
    \begin{equation*}
    \begin{split}
        \Delta^d(g,n,h) = \biggl|\E \tilde G_N \left( g h(R^d_{1,{n+1}}) \right) - \frac{1}{n} \E \tilde G_N \left( gh(R^d_{1,2}) \right) \\ - \frac{1}{n} \sum_{l = 2}^n \E \tilde G_N \left( gh(R^d_{1,{\ell}}) \right)\biggl|.       
    \end{split}
    \end{equation*}
    Then, according to \cite[Theorem~A.3]{bates2022multi}, we have
    \begin{equation*}
        \lim_{N \to +\infty} \sup_H \E_u \Delta^d(g,n,h) = 0.
    \end{equation*}
   We can then proceed as in \cite[Theorem~3.4]{pan} to deduce the desired result.
\end{proof}

\subsection{The interpolation} \label{ss.interpolation}

Given $\varsigma \in \R^N$, we define 
\begin{equation*}
    H_N^\text{sym}(\varsigma) = H_N(\varsigma,\dots,\varsigma).
\end{equation*}
We let $H_N^{\text{sym},1},\dots,H_N^{\text{sym},D}$ be $D$ independent copies of the Gaussian process $H_N^\text{sym}$ and we assume that $(H_N^{\text{sym},1},\dots,H_N^{\text{sym},D})$ is independent of $H_N$. For every $\sigma = (\sigma_1,\dots,\sigma_D) \in \R^{D \times N}$, we define 
\begin{equation} \label{e. Xi process}
    K_N(\sigma) = \frac{1}{\sqrt{D}} \sum_{d = 1}^D H^{\text{sym},d}_N(\sigma_d).
\end{equation}
The Gaussian process $K_N$ satisfies
\begin{equation*}
    \E \left[ K_N(\sigma) K_N(\tau) \right] = N \Xi \left(R(\sigma,\tau) \right),
\end{equation*}
where $\Xi(x) = \frac{1}{D} \sum_{d = 1}^D \xi_\dagger\left(x_d\right)$. Note that, $\Xi_\dagger$ =$\xi_\dagger$, also note that according to Proposition~\ref{e.inequality xi}, the function $\Xi$ is convex on $\R^D_+$. In particular, the family of Gaussian processes $(K_N)_{N \geq 1}$ is covered by Theorem~\ref{t.symmetric optimizer vector}. Thus, the free energy of the Gaussian process $K_N$ converges when $N \to +\infty$ and the limit free energy can be expressed using a variational formula. We let $\bar F_N$ denote the free energy of $H_N$ as in \eqref{e.free energy}, and we let $\bar G_N$ denote the free energy of $K_N$.

\begin{theorem} \label{t.upperbound}
    Assume that $\xi$ satisfies \eqref{e.inequality xi}, we have for every $t \geq 0$,
    \begin{equation} \label{e.upper bound}
        \limsup_{N \to +\infty} \bar F_N(t) \leq  \lim_{N \to +\infty} \bar G_N(t).
    \end{equation}
\end{theorem}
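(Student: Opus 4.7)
The plan is a Guerra–Toninelli interpolation between $H_N$ and $K_N$, using the pointwise inequality \eqref{e.inequality xi} together with the positivity principle of Lemma~\ref{l.positivity principle} to force the interpolated free energy to be monotone in the desired direction. Fix $t>0$ and, for $s\in[0,1]$, introduce the interpolating Hamiltonian
\begin{equation*}
\mcl H_s(\sigma) = \sqrt{2ts}\,H_N(\sigma) + \sqrt{2t(1-s)}\,K_N(\sigma) + c_N H_N^{\mathrm{pert}}(\sigma) - tN\bigl(s\xi+(1-s)\Xi\bigr)(R(\sigma,\sigma)),
\end{equation*}
where $c_N=N^{-\omega}$ with $\omega\in(0,1/2)$, $H_N^{\mathrm{pert}}$ is the perturbation \eqref{e.perturbation hamiltonian} taken independent of $(H_N,K_N)$, and $R(\sigma,\tau)=(\sigma_d\cdot\tau_d/N)_{d=1}^D$. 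Let $\phi(s)=-\frac{1}{N}\E\log\int e^{\mcl H_s(\sigma)}\,\d P_N(\sigma)$. Applying \eqref{e. unchanged fe} at the endpoints gives $|\phi(0)-\bar G_N(t)|\le c_N^2/2$ and $|\phi(1)-\bar F_N(t)|\le c_N^2/2$, so $\bar F_N(t)-\bar G_N(t)\le \phi(1)-\phi(0)+c_N^2$, and since neither side depends on the perturbation parameters $u$, we may freely apply $\E_u$ to the right-hand side.

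A standard Gaussian integration by parts computation yields $\phi'(s) = t\,\E\bigl\langle (\xi-\Xi)(R(\sigma,\tau))\bigr\rangle_s$, where $\langle\cdot\rangle_s$ denotes the Gibbs measure associated to $\mcl H_s$ and $\tau$ is an independent replica of $\sigma$. Indeed, the $s$-derivative of the covariance of $\sqrt{2ts}H_N+\sqrt{2t(1-s)}K_N$ evaluated at $(\sigma,\sigma)$ is $2tN(\xi-\Xi)(R(\sigma,\sigma))$ and exactly cancels against the deterministic $s$-derivative of the correction, while the diagonal $(\sigma,\tau)$ term contributes the claimed expression; the $s$-independent perturbation makes no contribution. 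Now Proposition~\ref{p.inequality xi} gives $(\xi-\Xi)(R)\le 0$ on $\R^D_+$, and since $P_1$ is supported in the unit ball, $R(\sigma,\tau)\in[-1,1]^D$, on which $\xi-\Xi$ is bounded by some $M$ and Lipschitz with constant $L$. For $\varepsilon>0$, componentwise truncation to $\R^D_+$ gives $(\xi-\Xi)(R)\le L\varepsilon$ whenever $R\in[-\varepsilon,1]^D$, so splitting the Gibbs average yields
\begin{equation*}
\E_u\phi'(s) \le tL\varepsilon + tM\cdot \E_u\E\bigl\langle\mathbf{1}_{\exists d,\,R_d(\sigma,\tau)<-\varepsilon}\bigr\rangle_s.
\end{equation*}

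Conditioning on $(H_N,K_N)$, the Hamiltonian $\mcl H_s-c_N H_N^{\mathrm{pert}}$ plays the role of the nonrandom $H$ in Lemma~\ref{l.positivity principle}, and the uniformity of that lemma over $H$, combined with Fubini and dominated convergence, shows that $\int_0^1 \E_u\E\langle\mathbf{1}_{\exists d,\,R_d<-\varepsilon}\rangle_s\,\d s\to 0$ as $N\to+\infty$. Integrating $\phi'$ over $[0,1]$, taking $\limsup_{N\to+\infty}$ of $\E_u(\phi(1)-\phi(0))$, and then letting $\varepsilon\to 0$ produces $\limsup_{N\to+\infty}(\bar F_N(t)-\bar G_N(t))\le 0$; combined with the convergence of $\bar G_N(t)$ provided by Theorem~\ref{t.symmetric optimizer vector} (applicable since $\Xi$ is convex on $\R^D_+$ by Proposition~\ref{p.inequality xi}), this establishes \eqref{e.upper bound}. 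The main subtlety is the rigorous use of the positivity principle on top of a random interpolating Hamiltonian: one needs the bound in Lemma~\ref{l.positivity principle} to hold \emph{uniformly} in $s\in[0,1]$ and in the realization of the Gaussian parts, which is precisely what the uniform supremum over nonrandom $H$ in that lemma delivers; once this is secured, the remainder is a routine Guerra–Toninelli computation.
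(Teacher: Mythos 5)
Your proposal is correct and is essentially the same argument as the paper's: a Guerra--Toninelli interpolation between $H_N$ and $K_N$ with the perturbation $c_N H_N^{\mathrm{pert}}$, Gaussian integration by parts to obtain $\phi'(s) = t\,\E\langle(\xi-\Xi)(R(\sigma,\tau))\rangle_s$, Proposition~\ref{p.inequality xi} to sign this on $\R^D_+$, and the positivity principle (Lemma~\ref{l.positivity principle}, applied by conditioning on the non-perturbative Gaussian parts and taking a supremum over nonrandom Hamiltonians) to control the Gibbs mass where some overlap coordinate is $\le -\varepsilon$. The only differences from the paper are cosmetic: you parametrize the interpolation so that $s=0$ is $K_N$ and $s=1$ is $H_N$ (the paper uses $\lambda=1-s$), and you keep $t$ explicit where the paper assumes $t=1/2$ without loss of generality.
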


\begin{remark}
    According to Proposition~\ref{p.inequality xi}, as soon as $\xi$ is permutation-invariant, $\xi$ satisfies \eqref{e.inequality xi}. So we can deduce Theorem~\ref{t. upper bound} from Theorem~\ref{t.upperbound} by applying Theorem~\ref{t.symmetric optimizer vector} to the Hamiltonian $K_N$ and observing that,
    \begin{equation*}
        \left( \xi_\dagger \left( \frac{\cdot}{D}\right)\right)^*(\lambda) = \Xi^*(\lambda,\dots,\lambda).
    \end{equation*}
\end{remark}

\begin{remark}
    Note that in fact, as stated, neither $P_1$ nor $\xi$ are required to be permutation-invariant for Theorem~\ref{t.upperbound} to hold. But if $P_1$ or $\xi$ is not permutation-invariant, there is no reason to believe that \eqref{e.upper bound} is a tight upper bound, and in fact the proof of Theorem~\ref{t.nonconvex fe} given below breaks down in this case.
\end{remark}


\begin{proof}
    Let $H^\text{pert}_N$ be the Gaussian process defined in \eqref{e.perturbation hamiltonian}, we take it independent of $H_N$ and $K_N$. Recall that the covariance of $H^\text{pert}_N$ depends on some parameter $u$ and we denote $\E_u$ the expectation with respect to this parameter. Without loss of generality, throughout the proof we assume that $t =1/2$. For every $\lambda \in [0,1]$, define
    \begin{align*}
        H_{N,\lambda}(\sigma) = &\left( \sqrt{1-\lambda} H_N(\sigma) - (1-\lambda)\frac{N}{2}\xi \left(R(\sigma,\sigma) \right) \right) \\
                                &+ \left( \sqrt{\lambda} K_N(\sigma) - \lambda \frac{N}{2}\Xi \left( R(\sigma,\sigma) \right) \right) \\
                                &+  c_N H^\text{pert}_N(\sigma).
    \end{align*}
    Where $c_N = N^{-\omega}$ with $0 < \omega < 1/2$. We let $\langle \cdot \rangle_\lambda$ denote the Gibbs measure associated to $H_{N,\lambda}(\sigma)$ and we also define the associated free energy
    \begin{equation*}
        \varphi_N(\lambda) = -\frac{1}{N} \E  \log \int \exp \left( H_{N,\lambda}(\sigma) \right) \d P_N(\sigma)
    \end{equation*}
    Here, as previously, we use the symbol $\E$ to denote expectation with respect to every source of randomness involved. In particular, $\E$ integrates out the randomness of $u$ and $\varphi_N(\lambda)$ is a nonrandom quantity. Note that crucially, thanks to \eqref{e. unchanged fe}, the nature of the term $c_N H^\text{pert}_N$ is indeed perturbative as it does not influence the value of the limit free energy of $\langle \cdot \rangle_\lambda$. Inequality \eqref{e.upper bound} can be rewritten as
    \begin{equation*}
        \limsup_{N \to +\infty} \varphi_N(0) \leq  \limsup_{N \to +\infty} \varphi_N(1).
    \end{equation*}
    We will prove this inequality by observing that $\varphi_N$ is almost a nondecreasing function of $\lambda$. For every $\lambda \in (0,1)$, $\varphi_N$ is differentiable at $\lambda$ and we have
    \begin{align*}
        \varphi_N'(\lambda) &= -\frac{1}{N} \E \left\langle \frac{d}{d\lambda} H_{N,\lambda}(\sigma) \right\rangle_\lambda \\
                            &= -\frac{1}{N} \left(  \E \left\langle -\frac{1}{2 \sqrt{1-\lambda}} H_N(\sigma) + \frac{1}{2\sqrt{\lambda}}K_N(\sigma) + \frac{N}{2} (\xi-\Xi) \left( R(\sigma,\sigma)\right) \right\rangle_\lambda \right),
    \end{align*}
    Let us now treat the first term in the above display using Gaussian integration by parts \cite[Lemma~1.1]{pan}. For every $\sigma,\tau \in \R^D$, we have 
    \begin{align*}
        &\E \left[ \left( -\frac{1}{2 \sqrt{1-\lambda}} H_N(\sigma) + \frac{1}{2\sqrt{\lambda}}K_N(\sigma) \right)H_{N,\lambda}(\tau) \right] \\        
        &= \E \left[ -\frac{1}{2 \sqrt{1-\lambda}} H_N(\sigma) \sqrt{1-\lambda} H_N(\tau) \right] + \E \left[ \frac{1}{2\sqrt{\lambda}}  K_N(\sigma) \sqrt{\lambda} K_N(\tau) \right] \\
        &+ \E[c_N H_N^\text{pert}(\sigma)] \E \left[ -\frac{1}{2 \sqrt{1-\lambda}} H_N(\sigma) + \frac{1}{2\sqrt{\lambda}}K_N(\sigma) \right] \\
        &+ \left((1-\lambda)\xi + \lambda \Xi \right)\E\left[-\frac{1}{2 \sqrt{1-\lambda}} H_N(\sigma) + \frac{1}{2\sqrt{\lambda}}K_N(\sigma)\right] \\
                                                                                      &= -\frac{1}{2} N \xi \left(R(\sigma,\tau)\right) + \frac{1}{2} N \Xi \left( R(\sigma,\tau) \right) + 0  + 0\\
                                                                                      &= \frac{N}{2} (\Xi-\xi)  \left( R(\sigma,\tau)\right).
    \end{align*}
    Thus, the Gaussian integration by parts formula yields
    \begin{equation*}
        \varphi'_N(\lambda) = \frac{1}{2}  \E \left\langle (\Xi-\xi)  \left( R(\sigma,\tau) \right) \right\rangle. 
    \end{equation*}
    From Remark~\ref{r.inequality xi everywhere}, if the power series that defines the function $\xi$ does not include any term of odd degree, then $\Xi-\xi$ is nonnegative on $\R^D$ which implies that $\varphi'_N(\lambda) \geq 0$ and we do not need the perturbation $c_N H^\text{pert}_N(\sigma)$ in this case. However, if some terms of odd degree are present, then according to Proposition~\ref{p.inequality xi} and Remark~\ref{r.counterexample}, we can only claim that $\Xi -\xi$ is nonnegative on $\R_+^D$. Since the overlaps $\frac{\sigma_d \cdot \tau_d}{N}$ can take values in $[-1,1]$, the previous display only implies 
    \begin{equation*}
        \varphi'_N(\lambda) \geq \delta(\varepsilon) - \frac{\| \Xi-\xi \|_\infty}{2} \E \left\langle \mathbf{1}_{\exists d, \; \frac{\sigma_d \cdot \tau_d}{N} \leq - \varepsilon} \right\rangle_\lambda,
    \end{equation*}
    where $\delta(\varepsilon) = \min \left\{ \frac{(\Xi-\xi)(R)}{2} \big| \; \forall d, \; R_d \in (-\varepsilon,1] \right\}$. Since $\Xi-\xi$ is locally Lipschitz and nonnegative on $\R_+^D$, there exists $C > 0$ such that $\delta(\varepsilon) \geq -C\varepsilon$. Therefore, 
    \begin{equation*}
        \varphi_N(1) \geq \varphi_N(0) - C\varepsilon - \frac{\| \Xi-\xi \|_\infty}{2} \int_0^1 \E \left\langle \mathbf{1}_{\exists d, \; \frac{\sigma_d \cdot \tau_d}{N} \leq - \varepsilon} \right\rangle_\lambda \d\lambda.
    \end{equation*}
    To control the remaining term, observe that
    \begin{align*}
        \E \left\langle \mathbf{1}_{\exists d, \; \frac{\sigma_d \cdot \tau_d}{N} \leq - \varepsilon} \right \rangle_\lambda \leq \sup_H \E_u\E_{H_N^\text{pert}} \tilde G_N^{\otimes 2} \left( \exists d, \; \frac{\sigma_d \cdot \tau_d}{N} \leq -\varepsilon \right),
    \end{align*}
    where, as in Lemma~\ref{l.positivity principle}, the supremum is taken over all nonrandom Hamiltonians $H$, $\E_{H_N^\text{pert}}$ denotes the expectation with respect to $H_N^\text{pert}$ and $\tilde G_N$ is the Gibbs measure of $H + c_N H_N^\text{pert}$. In particular, the right-hand side in the previous display is independent of $\lambda$ and according to Lemma~\ref{l.positivity principle}, it vanishes in the limit $N \to +\infty$. Letting $N \to +\infty$, we obtain 
    \begin{equation*}
       \limsup_{N \to +\infty} \varphi_N(1) \geq \limsup_{N \to +\infty} \varphi_N(0) - C\varepsilon - 0.
    \end{equation*}
    Since $\varepsilon > 0$ is arbitrary, we can conclude by letting $\varepsilon \to 0$.
    \end{proof}

\subsection{Connections with the Hamilton-Jacobi approach} \label{ss. nonconvex hj}

When $\xi$ is nonconvex, the Parisi formula completely breaks down. To the best of our knowledge, until recently, it seems that there was no clear conjecture on what the limit of the free energy should be in this case. In \cite[Conjecture~2.6]{mourrat2019parisi}, it is proposed that results such as Theorem~\ref{t.visco and fe} should generalize to nonconvex models. It was later shown in \cite{mourrat2020free,mourrat2020nonconvex} that the $\liminf$ of $\bar F_N(t)$ as $N \to +\infty$ is lower bounded in terms of the viscosity solution of a Hamilton-Jacobi equation. This lower bound holds, regardless of convexity and permutation invariance. 

Recall that $\xi_\dagger(\lambda) = \xi(\lambda,\dots,\lambda)$ and $\psi^\dagger(p) = \psi(p,\dots,p)$, also recall that 
\begin{equation*}
    \Xi(x) = \frac{1}{D} \sum_{d = 1}^D \xi(x_d,\dots,x_d).
\end{equation*}
If we combine the lower bound of \cite{mourrat2020free,mourrat2020nonconvex} with Theorem~\ref{t.upperbound} and \ref{t.symmetric HJ vector}, we obtain the following proposition. 
\begin{proposition}
Assume that $\xi$ is permutation-invariant and only depends on the diagonal coefficients of its argument, assume that $P_1$ is permutation-invariant. Then, even when $\xi$ is nonconvex on $S^D_+$, we have 
\begin{equation} \label{e.upper and lower}
    f(t,0) \leq \liminf_{N \to +\infty} \bar F_N(t) \leq \limsup_{N \to +\infty} \bar F_N(t) \leq g(t,0),
\end{equation}
where $f$ and $g$ are the viscosity solutions of the following equations,
\begin{equation} \label{e.HJ nonconvex}
    \begin{cases}
        \partial_t f - \int \xi(\nabla f) = 0 \text{ on } (0,\infty) \times (\mcl Q(\R^D_+) \cap L^2) \\
        f(0,\cdot) = \psi \text{ on }\mcl Q(\R^D_+) \cap L^2,
    \end{cases}
\end{equation}

\begin{equation} \label{e.HJ nonconvex vector}
    \begin{cases}
        \partial_t g - \int \xi_\dagger\left(\frac{\nabla g}{D}\right) = 0 \text{ on } (0,+\infty) \times (\mcl Q(\R_+) \cap L^2) \\
        g(0,\cdot) = \psi^\dagger \text{ on }\mcl Q(\R_+) \cap L^2.
    \end{cases}
\end{equation}
\end{proposition}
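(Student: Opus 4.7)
The middle inequality $\liminf_N \bar F_N(t) \leq \limsup_N \bar F_N(t)$ is trivial, so the proof splits into the two outer inequalities.

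For the upper bound, the plan is to invoke Theorem~\ref{t.upperbound} to obtain $\limsup_N \bar F_N(t) \leq \lim_N \bar G_N(t)$, where $\bar G_N$ is the free energy of the auxiliary Gaussian process $K_N$ from \eqref{e. Xi process}, whose covariance function is $\Xi(x) = \frac{1}{D}\sum_{d=1}^D \xi_\dagger(x_d)$. The function $\Xi$ is permutation-invariant and depends only on the diagonal coefficients of its argument by construction. Moreover, by Proposition~\ref{p.inequality xi}, $\xi_\dagger$ admits a power series on $\R_+$ with nonnegative coefficients, hence is convex on $\R_+$; since $\Xi$ depends only on diagonal coefficients, this suffices to make $\Xi$ convex on $S^D_+$. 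Theorem~\ref{t.symmetric HJ vector} thus applies to the family $(K_N)_N$ and yields $\lim_N \bar G_N(t) = g_\Xi(t,0)$, where $g_\Xi$ solves the Hamilton-Jacobi equation on $\mcl Q(\R_+)\cap L^2$ with Hamiltonian $\int \Xi_\dagger(\nabla \cdot / D) = \int \xi_\dagger(\nabla \cdot / D)$ (since $\Xi_\dagger = \xi_\dagger$) and initial condition $\psi^\dagger$ (which depends only on $P_1$). This is exactly \eqref{e.HJ nonconvex vector}, so uniqueness of viscosity solutions forces $g_\Xi = g$.

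For the lower bound, Theorem~\ref{t.lower bound} gives $\liminf_N \bar F_N(t) \geq \tilde f(t,0)$, where $\tilde f$ is the viscosity solution of the matrix-valued Hamilton-Jacobi equation on $\mcl Q(S^D_+)\cap L^2$ with initial condition $\psi$. To conclude, I would identify $\tilde f(t,0) = f(t,0)$, where $f$ is the vector-valued viscosity solution of \eqref{e.HJ nonconvex}. Define $\hat f(t, q) := \tilde f(t, \text{diag}(q))$ for $q \in \mcl Q(\R^D_+) \cap L^2$. Then $\hat f(0,\cdot) = \psi^\text{diag}$, and $\hat f$ inherits the Lipschitz regularity and $(\mcl Q(\R^D_+)\cap L^2)^*$-monotonicity of $\tilde f$. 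I would verify that $\hat f$ is itself a viscosity solution of \eqref{e.HJ nonconvex}: given any smooth test function $\phi$ on the vector-valued cone for which $\hat f - \phi$ attains a local extremum at $(t_0, q_0)$, I extend $\phi$ to a test function $\tilde \phi$ on the matrix-valued cone by $\tilde \phi(t, \mfk q) = \phi(t, (\mfk q_{dd})_{1 \leq d \leq D})$, so that $\tilde f - \tilde \phi$ attains a local extremum at $(t_0, \text{diag}(q_0))$. Applying the viscosity property of $\tilde f$ there, and using that $\xi(\nabla \tilde \phi)$ is computed purely from the diagonal part of $\nabla \tilde \phi$ (which matches $\nabla \phi$ after identification), one obtains the required inequality for $\hat f$. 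Uniqueness of the viscosity solution on the vector-valued cone then forces $\hat f = f$, whence $\tilde f(t,0) = f(t,0)$.

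The main obstacle is the identification step in the lower bound: lifting vector-valued test functions to compatible matrix-valued ones, and verifying that the local extremum, the monotonicity conditions, and the interior-point restriction in the definition of a viscosity solution all survive the lift. The analogous reduction in the convex case, handled in Section~\ref{s.enriched } by direct differentiation of a Gateaux-smooth free energy, is unavailable here; the entire argument must be phrased at the level of viscosity test functions. The upper-bound half, by contrast, is a direct concatenation of Theorems~\ref{t.upperbound} and~\ref{t.symmetric HJ vector}.
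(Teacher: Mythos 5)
The upper-bound half of your argument is correct and matches the paper's intent: concatenate Theorem~\ref{t.upperbound} with Theorem~\ref{t.symmetric HJ vector} applied to the covariance $\Xi$, noting $\Xi_\dagger = \xi_\dagger$ and that $\psi^\dagger$ depends only on $P_1$, so the solution agrees with $g$ by uniqueness.

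The lower-bound half has a genuine gap. You attempt to derive $\liminf_N \bar F_N(t) \geq f(t,0)$ from the $\mcl Q(S^D_+)$-valued statement of Theorem~\ref{t.lower bound} via an identification $\tilde f(t,\mathrm{diag}(\cdot)) = f(t,\cdot)$, but your test-function lift does not establish the viscosity property of $\hat f := \tilde f(t,\mathrm{diag}(\cdot))$. The lifted test function $\tilde\phi(t,\mfk q) := \phi(t, (\mfk q_{dd})_d)$ is constant along off-diagonal perturbations of $\mfk q$, whereas $\tilde f$ is generally not: the initial datum $\psi$ on $\mcl Q(S^D_+)$ depends on the full matrix path, not merely its diagonal, so $\tilde f(t,\mfk q)$ depends on off-diagonal entries of $\mfk q$ as well. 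Consequently, a local extremum of $\hat f - \phi$ over the vector cone $\mcl Q(\R^D_+)$ does \emph{not} lift to a local extremum of $\tilde f - \tilde\phi$ over the full matrix cone $\mcl Q(S^D_+)$: moving in an off-diagonal direction can change $\tilde f$ while leaving $\tilde\phi$ fixed. Without that lift, the viscosity inequalities for $\tilde f$ are unavailable at $(t_0,\mathrm{diag}(q_0))$, and the identification remains unproven. (In the convex case the paper obtains the analogous reduction not by lifting test functions but by differentiating the Gateaux-smooth limit free energy directly, as in Proposition~\ref{p. strong solution vector}; when $\xi$ is nonconvex that route is also blocked without hypothesis \eqref{e.hypo}, which the present proposition does not assume.)

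What the paper actually does is bypass Theorem~\ref{t.lower bound} entirely and invoke the lower bound from \cite{mourrat2020free,mourrat2020nonconvex} in the form specialized to models whose interaction depends only on the diagonal overlap: for such models the enrichment in those references is carried out over the vector-valued cone $\mcl Q(\R^D_+)$, so the inequality $\liminf_N \bar F_N(t) \geq f(t,0)$, with $f$ the viscosity solution of \eqref{e.HJ nonconvex}, is quoted directly without any matrix-to-vector reduction. Your detour through the $S^D_+$-valued equation is therefore not just more complicated than the paper's route — it introduces an identification you cannot close.
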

The results of Section~\ref{ss.vector paths} can be regarded as a proof of the fact $f(t,0) = g(t,0)$ when $\xi$ is convex. To prove this statement we only relied on the convexity of $\xi_\dagger$ and the fact that $f$ is not simply a solution in the viscosity sense but also a Gateaux differentiable solution. To the best of our knowledge, there is no known proof of the fact that $f$ is Gateaux differentiable when $\xi$ is nonconvex. Indeed, in the convex case, proving that $f$ is Gateaux differentiable amounts to observing that it is semi-convex thanks to the Hopf-Lax representation and semi-concave thanks to the fact it can be written as the limit of the enriched free energy (see \cite[Proposition~8.6]{chenmourrat2023cavity}). Since the Hopf-Lax representation for viscosity solutions is not available when $\xi$ is nonconvex, some innovative new ideas seem to be needed to prove (or disprove) the Gateaux differentiability of $f$ in the nonconvex case. Nevertheless, for nonconvex models, we can formulate the following hypothesis.

\begin{equation}     
 \label{e.hypo} \tag{H$_{\xi,P_1}$}
    \begin{split}
         \text{The viscosity solution of \eqref{e.HJ nonconvex} is Gateaux} \\
         \text{differentiable on $(0,\infty) \times \left( \mcl Q_\upa(\R^D_+) \cap L^\infty \right)$.}
    \end{split}   
\end{equation}

In physics terms, \eqref{e.hypo} can be regarded as the statement that no first order phase transition occurs in the mean-field spin glass model with covariance function $\xi$ and reference measure $P_1$. According to \cite[Propositions~7.2~\&~8.6]{chenmourrat2023cavity}, when $\xi$ is convex, \eqref{e.hypo} holds. When $\xi$ is nonconvex, assuming \eqref{e.hypo} holds, we obtain the following representation for the limit free energy.

\begin{theorem} \label{t.nonconvex fe}
    Assume that $\xi$ is permutation-invariant and only depends on the diagonal coefficients of its argument, assume that $P_1$ is permutation-invariant. If \eqref{e.hypo} is true, then for every $t > 0$, the free energy $\bar F_N(t)$ converges as $N \to +\infty$ and 
    \begin{equation} \label{e.nonconvex fe}
        \lim_{N \to +\infty} \bar F_N(t) = g(t,0),
    \end{equation}
    where $g$ is the viscosity solution of \eqref{e.HJ nonconvex vector}. Furthermore, we have $g(t,0) = f(t,0)$, where $f$ is the viscosity solution of \eqref{e.HJ nonconvex} and 
    \begin{equation*}
        g(t,0) = \sup_{p \in \mcl Q(\R_+) \cap L^\infty} \left\{ \psi(p \text{id}_D) - t \int_0^1 \Xi^* \left( \frac{ p(u) \text{id}_D}{t} \right) \d u \right\}.
    \end{equation*}
\end{theorem}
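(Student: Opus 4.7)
The proposition preceding the theorem already gives the sandwich
\[
f(t,0) \le \liminf_{N \to +\infty} \bar F_N(t) \le \limsup_{N \to +\infty} \bar F_N(t) \le g(t,0),
\]
so it is enough to prove $f(t,0) = g(t,0)$ under \eqref{e.hypo}; once this is done, the convergence of $\bar F_N(t)$ follows immediately and the variational formula is the specialization at $p = 0$ of the Hopf-Lax representation given by Theorem~\ref{t.symmetric HJ vector}. The plan is to transplant the reduction scheme of Section~\ref{s. fe and hj}, originally applied to the limit of free energies in the convex case, to the viscosity solution $f$ of the nonconvex equation \eqref{e.HJ nonconvex}, reading \eqref{e.hypo} as the substitute for the Gateaux differentiability that convexity of $\xi$ furnished via \cite[Proposition~8.6]{chenmourrat2023cavity}.

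The first step is to show that $f$ is permutation invariant: $f(t, q^s) = f(t, q)$ for every $q \in \mcl Q(\R^D_+) \cap L^2$ and every $s \in \msc S_D$. Because $\xi$, $\psi$ and the cone $\mcl Q(\R^D_+)$ are all invariant under coordinate permutations, the function $(t,q) \mapsto f(t, q^s)$ is also a viscosity solution of \eqref{e.HJ nonconvex} with the same initial condition, so uniqueness via the comparison principle (Theorem~\ref{t.comparison}) forces equality. Differentiating this identity along admissible perturbations and invoking \eqref{e.hypo} as in Proposition~\ref{p.permutation invariant fe}, one obtains that $\nabla f(t, (p, \dots, p)) = (r, \dots, r)$ for some $r \in L^2$ at every $p \in \mcl Q_\upa(\R_+) \cap L^\infty$. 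I would then set $f^\dagger(t, p) = f(t, (p, \dots, p))$ and run through the analogues of Propositions~\ref{p. strong solution vector}--\ref{p. approximate strong solution vector} and Theorem~\ref{t.symmetric HJ vector}: restrict to the finite-dimensional sections $f^{\dagger, j}(t, x) = f^\dagger(t, \Lambda^j x)$, verify that they are approximate strong solutions of the discretized reduced equation with error $O(1/j)$, and apply Proposition~\ref{p. approximate strong solutions are approximate visco solutions} followed by a limit $j \to +\infty$ using the density of $L^\infty$ in $L^2$. This identifies $f^\dagger$ with the unique viscosity solution $g$ of \eqref{e.HJ nonconvex vector} on $\R_+ \times (\mcl Q(\R_+) \cap L^2)$, and specializing to $p = 0$ yields $f(t, 0) = g(t, 0)$.

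The main obstacle will be the analogue of Proposition~\ref{p. strong solution vector}: showing that $f^\dagger$ solves the reduced strong equation $\partial_t f^\dagger - \int \xi_\dagger(\nabla f^\dagger / D) = 0$ at every Gateaux-differentiability point. In the convex case this followed from the cavity computation \cite[Proposition~7.2]{chenmourrat2023cavity}, which operates at the level of the prelimit $\bar F_N$ and exploits its known convergence to $f$; in the nonconvex setting that convergence is exactly what we are trying to prove, so the strong equation must instead be extracted directly from the viscosity property of $f$. A natural route is to bracket $f$ at a differentiable point $(t_0, q_0)$ by smooth quadratic test functions
\[
\phi^{\pm}_{\varepsilon}(t, q) = f(t_0, q_0) + (t - t_0) \partial_t f(t_0, q_0) + \langle q - q_0, \nabla f(t_0, q_0) \rangle_{L^2} \pm \varepsilon \bigl(|t - t_0|^2 + |q - q_0|_{L^2}^2\bigr)
\]
and read both the sub- and supersolution inequalities off Definition~\ref{d.visco}. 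The subtlety that \eqref{e.hypo} only provides Gateaux, not Fréchet, differentiability can be handled by first descending to the finite-dimensional sections $f^{\dagger,j}$ on $\mathring{\mcl Q^j(\R_+)}$, where Lipschitz continuity combined with continuity of the gradient upgrades Gateaux to classical differentiability, and then transferring the conclusion back to infinite dimensions through the stability statement of Proposition~\ref{p. approximate strong solutions are approximate visco solutions}.
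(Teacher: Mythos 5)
Your proposal follows the same route as the paper's proof: start from the sandwich \eqref{e.upper and lower}, establish permutation invariance of $f$ via uniqueness of the viscosity solution of \eqref{e.HJ nonconvex}, differentiate this symmetry together with \eqref{e.hypo} to see that $\nabla f(t,(p,\dots,p))$ is a symmetric vector, deduce the strong equation for $f^\dagger$ on $\mcl Q_\upa(\R_+)\cap L^\infty$, and then run the finite-dimensional sections and the comparison machinery of Theorem~\ref{t.symmetric HJ vector} (via Propositions~\ref{p. approximate strong solution vector} and~\ref{p. approximate strong solutions are approximate visco solutions}) to identify $f^\dagger$ with $g$, finishing with the one-dimensional Hopf--Lax formula. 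The only point at which you go deeper than the paper is the extraction of the strong equation $\partial_t f = \int \xi(\nabla f)$ from the viscosity property at the Gateaux-differentiability points guaranteed by \eqref{e.hypo}; the paper records this in a single line, asserting that the identity \emph{follows from} \eqref{e.hypo}. Your caution here is well placed: a viscosity solution satisfies the equation classically at a point of Fr\'echet differentiability, because a smooth test function touching from above or below exists and must share the same derivative, but Gateaux differentiability alone does not furnish such a test function, so the quadratic-bracketing argument does not close as stated. Your proposed finite-dimensional descent would resolve this if the Gateaux derivative were in addition continuous (Lipschitz plus continuous Gateaux derivative on an open set implies Fr\'echet differentiability in finite dimensions), but \eqref{e.hypo} does not literally posit that continuity; the paper treats \eqref{e.hypo} as strong enough to give the classical equation without making this regularity explicit. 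In short, your structure is correct and mirrors the paper, and your discussion usefully exposes an implicit regularity ingredient in Step~2 of the paper's argument that neither your remedy nor the paper's terse phrasing fully pins down.
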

\begin{remark} \label{r.before t_c}
    In fact, even if \eqref{e.hypo} only holds on a region of the form $[0,t_c) \times (\mcl Q_\upa(\R^D_+) \cap L^\infty)$ for some $t_c > 0$ then \eqref{e.nonconvex fe} still holds for $t \leq t_c$.
\end{remark}
\begin{remark} \label{r.no first order transition for small t}
     It seems plausible that there exists $t_c > 0$ such that the weaker version of \eqref{e.hypo} mentioned in Remark~\ref{r.before t_c} holds. Indeed, $\psi$ is Gateaux differentiable on $\mcl Q(\R^D_+) \cap L^2$ and its gradient is a Lipschitz function \cite[Corollary~5.2]{chenmourrat2023cavity}. Therefore, the characteristic curves
     \begin{equation*}
         q \mapsto q - t\nabla \xi(\nabla \psi(q)),
     \end{equation*}
     are injective when $t \geq 0$ is small enough $(t < 1/ \|\nabla \xi(\nabla \psi) \|_{\text{Lip}})$. Usually, the viscosity solution remains differentiable as long as the characteristic curves are injective. 
\end{remark}
When $D = 1$, the Parisi formula is a differentiable function of $\beta = \sqrt{2t}$ \cite{panchenko2008differentiability}. Therefore, the right-hand side of \eqref{e.nonconvex fe} is differentiable on $(0,+\infty)$ as a function of $t$. So at least $\eqref{e.hypo}$ is not in direct contradiction with \eqref{e.nonconvex fe}. 

Conditionally on \eqref{e.hypo}, Theorem~\ref{t.nonconvex fe} confirms \cite[Conjecture~2.6]{mourrat2019parisi} for permutation-invariant models. Note that the argument given below will not show that $(t,q) \mapsto \lim_{N \to +\infty} \bar F_N(t,q)$ is the viscosity solution of 
\begin{equation*}
    \begin{cases}
        \partial_t f - \int \xi(\nabla f) = 0 \text{ on } (0,\infty) \times (\mcl Q(\R^D_+) \cap L^2) \\
        f(0,\cdot) = \psi \text{ on }\mcl Q(\R^D_+) \cap L^2.
    \end{cases}
\end{equation*}
In fact, even under \eqref{e.hypo}, it does not follow from the proof below that $\bar F_N(t,q)$ converges as $N \to +\infty$ when $q$ is not of the form $(p,\dots,p)$.

It is worth mentioning that Theorem~\ref{t.nonconvex fe} still holds if \eqref{e.hypo} is replaced by the assumption that the enriched free energy converges as $N \to +\infty$ and the limit is Gateaux differentiable on $(0,\infty) \times \left( \mcl Q_\upa(\R^D_+) \cap L^\infty \right)$. To prove this version of Theorem~\ref{t.nonconvex fe}, one can adapt the argument given below, replacing $f$ the viscosity solution of \eqref{e.HJ nonconvex} by the limit free energy and appealing to \cite[Proposition~7.2]{chenmourrat2023cavity}. Note that when choosing to state Theorem~\ref{t.nonconvex fe} this way, the convergence of the free energy as $N \to +\infty$ is a hypothesis, not a conclusion.
   
\begin{proof}[Proof of Theorem~\ref{t.nonconvex fe}]
    Let $f$ and $g$ denote the viscosity solutions of \eqref{e.HJ nonconvex} and \eqref{e.HJ nonconvex vector}. Given \eqref{e.upper and lower}, it suffices to verify that $f(t,0) = g(t,0)$ to prove the desired result. As previously, for every $p \in \mcl Q(\R_+) \cap L^1$, we let 
    \begin{equation*}
        f^\dagger(t,p) = f(t,(p,\dots,p)).
    \end{equation*}

    \noindent Step 1. We show that for every $s \in \msc S_D$, 
    \begin{equation*}
        f(t,(p_1,\dots,p_D)) = f(t,(p_{s(1)},\dots,p_{s(D)})).
    \end{equation*}

    \noindent Given $s \in \msc S_D$, we let 
    \begin{align*}
        f^s(t,(p_1,\dots,p_D)) &= f(t,(p_{s(1)},\dots,p_{s(D)})) \\
        \psi^s(p_1,\dots,p_D) &= \psi(p_{s(1)},\dots,p_{s(D)}).        
    \end{align*}
     Since $\psi$ depends only on $P_1$ and not on $\xi$, according to Proposition~\ref{p. fe is perm invariant}, we have $\psi^s = \psi$. Using the permutation invariance of $\xi$, it can be checked that $f^s$ is a viscosity solution of \eqref{e.HJ nonconvex}. By uniqueness of the viscosity solution of \eqref{e.HJ nonconvex}, we obtain $f^s = f$.

    \noindent Step 2. We show that $f^\dagger$ is a strong solution of
    \begin{equation*}
        \begin{cases}
            \partial_t f^\dagger - \int \Xi_\dagger\left(\frac{\nabla f^\dagger}{D}\right) = 0 \text{ on } (0,+\infty) \times \left( \mcl Q_{\upa}(\R_+) \cap L^\infty \right) \\
            f^\dagger(0,\cdot) = \psi^\dagger \text{ on } \mcl Q_{\upa}(\R_+) \cap L^\infty.
        \end{cases}
    \end{equation*}

    \noindent Using Step 1, we can proceed as in the proof of Proposition~\ref{p. strong solution vector} to discover that for every $(t,p) \in (0,+\infty) \times (\mcl Q_{\upa}(\R_+) \cap L^\infty)$, $f^\dagger(t,\cdot)$ is Gateaux differentiable at $p$ and
    \begin{equation}
        \nabla f(t,(p,\dots,p)) = \left(\frac{\nabla f^\dagger(t,p)}{D},\dots,\frac{\nabla f^\dagger(t,p)}{D}\right).
    \end{equation}
    It then follows from \eqref{e.hypo} that
    \begin{equation*}
        \partial_t f^\dagger(t,p) - \int \xi_\dagger\left(\frac{\nabla f^\dagger(t,p)}{D}\right) = \partial_t f(t,(p,\dots,p)) - \int \xi(\nabla f(t,(p,\dots,p))) = 0.
    \end{equation*}
    Finally, since $\xi_\dagger =  \Xi_\dagger$ we conclude Step 2.

    \noindent Step 3. We show that $f(t,0) = g(t,0)$.

    \noindent According to Proposition~\ref{p.inequality xi}, $\Xi$ is convex on $\R^D_+$ and $\Xi$ is the covariance function of the process defined by \eqref{e. Xi process}. In Theorem~\ref{t.symmetric HJ vector} we have shown that, when the nonlinearity is convex, any strong solution of a Hamilton-Jacobi equation on $\mcl Q_{\upa}(\R_+) \cap L^\infty$ is also a viscosity solution on $\mcl Q(\R_+) \cap L^2$. Therefore, using Step 2 and applying Theorem~\ref{t.symmetric HJ vector} to $\Xi_\dagger$ and $f^\dagger$, we discover that $f^\dagger$ the viscosity solution of 
    \begin{equation*}
    \begin{cases}
        \partial_t f^\dagger - \int \Xi_\dagger\left(\frac{\nabla  f^\dagger}{D}\right) = 0 \text{ on } (0,+\infty) \times (\mcl Q(\R_+) \cap L^2) \\
         f^\dagger(0,\cdot) = \psi^\dagger \text{ on }\mcl Q(\R_+) \cap L^2.
    \end{cases}
    \end{equation*}
    Once again, since $\Xi_\dagger = \xi_\dagger$, $f^\dagger$ is in fact the viscosity solution of \eqref{e.HJ nonconvex vector}. Finally, by uniqueness of the viscosity solution, it follows that $f^\dagger = g$ and choosing $p = 0$, we deduce $f(t,0)= g(t,0)$. Thus $ F_N(t)$ converges as $N \to +\infty$ and 
    \begin{equation*}
        \lim_{N \to +\infty} \bar F_N(t) = f(t,0) = g(t,0).
    \end{equation*}

    \noindent Step 4. We show that 
    \begin{equation*}
        g(t,0) = \sup_{p \in \mcl Q(\R_+) \cap L^\infty} \left\{ \psi(p \text{id}_D) - t \int_0^1 \Xi^* \left( \frac{ p(u) \text{id}_D}{t} \right) \d u \right\}.
    \end{equation*}
    \noindent According to Proposition~\ref{p.inequality xi}, $\xi_\dagger$ is convex on $\R_+$. Therefore, it follows from the one dimensional Hopf-Lax representation \cite[Proposition~A.3]{chen2022hamilton} that,
    \begin{equation*}
        g(t,0) = \sup_{p \in \mcl Q(\R_+) \cap L^\infty} \left\{ \psi^\dagger(p) - t \int_0^1 \left( \xi_\dagger \left( \frac{\cdot}{D}\right)\right)^* \left( \frac{ p(u)}{t} \right) \d u \right\}.
    \end{equation*}
    In addition, $\Xi$ is convex on $\R_+^D$ and we have
    \begin{equation*}
        \left( \xi_\dagger \left( \frac{\cdot}{D}\right)\right)^*(\lambda) = \Xi^*(\lambda,\dots,\lambda).
    \end{equation*}
    Thus, 
    \begin{equation*}
        g(t,0) = \sup_{p \in \mcl Q(\R_+) \cap L^\infty} \left\{ \psi(p \text{id}_D) - t \int_0^1 \Xi^* \left( \frac{ p(u) \text{id}_D}{t} \right) \d u \right\}.
    \end{equation*}
\end{proof}
\newpage
\small
\bibliographystyle{plain}
\bibliography{ref}

\newcommand{\noop}[1]{} \def\cprime{$'$}
\begin{thebibliography}{10}

\bibitem{auffinger2015parisi}
Antonio Auffinger and Wei-Kuo Chen.
\newblock The {P}arisi formula has a unique minimizer.
\newblock {\em Comm. Math. Phys.}, 335(3):1429--1444, 2015.

\bibitem{barra2012glassy}
Adriano Barra, Giuseppe Genovese, Francesco Guerra, and Daniele Tantari.
\newblock How glassy are neural networks?
\newblock {\em J. Stat. Mech.}, 2012(07):P07009, 2012.

\bibitem{bates2022multi}
Erik Bates and Youngtak Sohn.
\newblock Free energy in multi-species mixed p-spin spherical models.
\newblock {\em Electron. J. Probab.}, 27:1--75, 2022.

\bibitem{bates2023parisi}
Erik Bates and Youngtak Sohn.
\newblock Parisi formula for balanced {P}otts spin glass.
\newblock \noop{2023}{Preprint, arXiv:2310.06745}.

\bibitem{chen2023parisi}
Hong-Bin Chen.
\newblock On {P}arisi measures of {P}otts spin glasses with correction.
\newblock \noop{2023}{Preprint, arXiv:2311.11699}.

\bibitem{chen2023parisipdevector}
Hong-Bin Chen.
\newblock {P}arisi {PDE} and convexity for vector spins.
\newblock {Preprint, arXiv:2311.10446}.

\bibitem{chenmourrat2023cavity}
Hong-Bin Chen and Jean-Christophe Mourrat.
\newblock On the free energy of vector spin glasses with non-convex interactions.
\newblock \noop{2023}{Preprint, arXiv:2311.08980}.

\bibitem{chen2022fenchelmoreau}
Hong-Bin Chen and Jiaming Xia.
\newblock Fenchel-{M}oreau identities on convex cones.
\newblock \noop{2022}{Preprint, arXiv:2011.06979}.

\bibitem{chen2022hamilton}
Hong-Bin Chen and Jiaming Xia.
\newblock {Hamilton-{J}acobi equations from mean-field spin glasses}.
\newblock \noop{2022}{Preprint, arXiv:2201.12732}.

\bibitem{chen2023viscosity}
Hong-Bin Chen and Jiaming Xia.
\newblock Hamilton-{J}acobi equations with monotone nonlinearities on convex cones.
\newblock \noop{2023}{Preprint, arXiv:2206.12537}.

\bibitem{elderfield1983potts}
David Elderfield and David Sherrington.
\newblock The curious case of the {P}otts spin glass.
\newblock {\em J. Phys. C: Solid State Phys.}, 16(15):497--503, 1983.

\bibitem{evans}
Lawrence~C. Evans.
\newblock {\em Partial differential equations}, volume~19 of {\em Graduate Studies in Mathematics}.
\newblock American Mathematical Society, Providence, RI, second edition, 2010.

\bibitem{gue03}
Francesco Guerra.
\newblock Broken replica symmetry bounds in the mean-field spin glass model.
\newblock {\em Comm. Math. Phys.}, 233(1):1--12, 2003.

\bibitem{guerra2002}
Francesco Guerra and Fabio~Lucio Toninelli.
\newblock The thermodynamic limit in mean-field spin glass models.
\newblock {\em Comm. Math. Phys.}, 230:71--79, 2002.

\bibitem{mourrat2020nonconvex}
Jean-Christophe Mourrat.
\newblock Nonconvex interactions in mean-field spin glasses.
\newblock {\em Probab. Math. Phys.}, 2(2):281--339, 2021.

\bibitem{mourrat2019parisi}
Jean-Christophe Mourrat.
\newblock The {P}arisi formula is a {H}amilton-{J}acobi equation in {W}asserstein space.
\newblock {\em Canad. J. Math.}, 74(3):607--629, 2022.

\bibitem{mourrat2020free}
Jean-Christophe Mourrat.
\newblock Free energy upper bound for mean-field vector spin glasses.
\newblock {\em Ann. Inst. Henri Poincaré Probab. Stat.}, 59(3):1143--1182, 2023.

\bibitem{oneill2021doubleconstant}
Ben O'Neill.
\newblock The double-constant matrix, centering matrix and equicorrelation matrix: Theory and applications.
\newblock \noop{2021}{Preprint, arXiv:2109.05814}.

\bibitem{panchenko2008differentiability}
Dmitry Panchenko.
\newblock On differentiability of the {P}arisi formula.
\newblock {\em Elect. Comm. in Probab.}, 13:241--247, 2008.

\bibitem{pan}
Dmitry Panchenko.
\newblock {\em The {S}herrington-{K}irkpatrick model}.
\newblock Springer Monographs in Mathematics. Springer, New York, 2013.

\bibitem{pan.multi}
Dmitry Panchenko.
\newblock The free energy in a multi-species {S}herrington-{K}irkpatrick model.
\newblock {\em Ann. Probab.}, 43(6):3494--3513, 2015.

\bibitem{pan.potts}
Dmitry Panchenko.
\newblock Free energy in the {P}otts spin glass.
\newblock {\em Ann. Probab.}, 46(2):829--864, 2018.

\bibitem{pan.vec}
Dmitry Panchenko.
\newblock Free energy in the mixed {$p$}-spin models with vector spins.
\newblock {\em Ann. Probab.}, 46(2):865--896, \noop{2019}2018.

\bibitem{parisi1979infinite}
Giorgio Parisi.
\newblock Infinite number of order parameters for spin-glasses.
\newblock {\em Phys. Rev. Lett.}, 43(23):1754--1756, 1979.

\bibitem{rockafellar1970convex}
Ralph~Tyrrell Rockafellar.
\newblock {\em Convex analysis}, volume~36.
\newblock Princeton university press, 1970.

\bibitem{Tpaper}
Michel Talagrand.
\newblock The {P}arisi formula.
\newblock {\em Ann. of Math.}, 163(1):221--263, 2006.

\end{thebibliography}

\end{document}